\documentclass[11pt, 3p]{elsarticle}

\usepackage{setspace}
\onehalfspacing
\usepackage{microtype}
\usepackage{paralist}
\usepackage{parskip}
\usepackage{url}
\usepackage{xcolor}

\usepackage[reqno]{amsmath}
\usepackage{amssymb}
\usepackage{amsthm}
\usepackage{mathrsfs}

\usepackage{adjustbox}
\usepackage{array}
\usepackage{booktabs}
\usepackage{caption}
\usepackage{colortbl}
\usepackage{float}
\usepackage{graphicx}
\usepackage{longtable}
\usepackage{lscape}
\usepackage{rotating}
\usepackage{subcaption}

\usepackage[algoruled,linesnumbered]{algorithm2e}
\usepackage{lipsum}
\usepackage{todonotes}

\usepackage{tikz}
\usetikzlibrary{patterns,fit,arrows,shapes,positioning,decorations.markings}

\usepackage[colorlinks,
            citecolor=blue,
            urlcolor=blue,
            linkcolor=blue,
            bookmarks=false]{hyperref}
\usepackage{cleveref}

\captionsetup{subrefformat=parens}

\newtheorem{proposition}{Proposition}

\definecolor{lightcyan}{rgb}{0.9, 1, 1}
\definecolor{pastelgreen}{RGB}{205, 249, 189}
\definecolor{pastelblue}{RGB}{201, 246, 255}
\definecolor{verdone}{RGB}{0,128,0}
\definecolor{mygreen}{RGB}{51,204,51}
\definecolor{mblue}{RGB}{0,112,192}
\definecolor{azzurro}{RGB}{1, 120, 229}

\newcommand{\UB}{\mathrm{UB}}

\newcommand{\pe}{c}

\newcommand{\pev}{\boldsymbol{c}}

\newcommand{\spacedvdots}{%
  \mathbin{%
    \vcenter{\baselineskip=8pt \lineskiplimit=0pt \hbox{.}\hbox{.}\hbox{.}}%
  }%
}

\newcommand{\spacedcdots}{%
  \mathbin{%
    \hbox{.\kern0.5em.\kern0.5em.}%
  }%
}

\tikzstyle arrowstyle=[scale=1.8]
\tikzstyle directed=[postaction={decorate,decoration={markings,
    mark=at position .65 with {\arrow[arrowstyle]{stealth}}}}]
\tikzstyle reverse directed=[postaction={decorate,decoration={markings,
    mark=at position .65 with {\arrowreversed[arrowstyle]{stealth};}}}]

\newcommand{\OSBoundGeneral}{\mathrm{UB}_1(G,\boldsymbol c, \tilde{\mathscr{I}})}
\newcommand{\OSBound}{\mathrm{UB}_1(G,\boldsymbol c,\mathscr{C})}

\newcommand{\OSBunon}{\mathrm{UB}_1(G^1_n,\boldsymbol c^1_n,\mathscr{C}^1_n)}
\newcommand{\OSBduen}{\mathrm{UB}_1(G^2_n,\boldsymbol c^2_n,\mathscr{C}^2_n)}
\newcommand{\OSBtren}{\mathrm{UB}_1(G^3_n,\boldsymbol c^3_n,\mathscr{C}^3_n)}

\newcommand{\OSBall}{\mathrm{UB}_1(G,\boldsymbol c,\mathscr{I})}

\newcommand{\JAPbound}{\mathrm{UB}_2(G,\boldsymbol c,\mathscr{C}, \prec)}

\newcommand{\JAPunon}{\mathrm{UB}_2(G^1_n,\boldsymbol c^1_n,\mathscr{C}^1_n, \prec^1_n)}
\newcommand{\JAPduen}{\mathrm{UB}_2(G^2_n,\boldsymbol c^2_n,\mathscr{C}^2_n, \prec^2_n)}
\newcommand{\JAPtren}{\mathrm{UB}_2(G^3_n,\boldsymbol c^3_n,\mathscr{C}^3_n, \prec^3_n)}
\newcommand{\JAPquattrok}{\mathrm{UB}_2(G^4_k,\boldsymbol c^4_k,\mathscr{C}^4_k, \prec^4_k)}
\newcommand{\JAPcinquen}{\mathrm{UB}_2(G^5_n,\boldsymbol c^5_n,\mathscr{C}^5_n, \prec^5_n)}

\newcommand{\optimum}{\omega(G,\boldsymbol c)}

\newcommand{\optunon}{\omega(G^1_n,\boldsymbol c^1_n)}

\newcommand{\HFBbound}{\mathrm{UB}_3(G,\boldsymbol c,k)}
\newcommand{\HFBquattrok}{\mathrm{UB}_3(G^4_k,\boldsymbol c^4_k,k)}
\newcommand{\HFBunon}{\mathrm{UB}_3(G^1_n,\boldsymbol c^1_n,n)}

\newcommand{\HFBcinquen}{\mathrm{UB}_3(G^5_n,\boldsymbol c^5_n,n+1)}

\newcommand{\Fone}{\mathrm{F1}}
\newcommand{\Feleven}{\mathrm{F11}}

\newcolumntype{T}{>{\ttfamily}l}
\journal{ }

\begin{document}

\begin{frontmatter}
  \title{\LARGE Strength of the Upper Bounds for the Edge-Weighted Maximum Clique Problem}

{
  \author[1]{Fabio Ciccarelli}
  \ead{f.ciccarelli@uniroma1.it}
  
  \author[1]{Valerio Dose}
  \ead{valerio.dose@uniroma1.it}

  \author[1]{Fabio Furini}
  \ead{fabio.furini@uniroma1.it}

  \author[2]{Marta Monaci}
  \ead{marta.monaci@unimercatorum.it}

  \address[1]{Department of Computer, Control and Management Engineering ``Antonio Ruberti'' - Sapienza University of Rome. Via Ariosto 25, 00185 Roma, Italy.  
  }
  \address[2]{Dipartimento di Ingegneria e Scienze - Universitas Mercatorum. Piazza Mattei 10, 00186 Roma, Italy.
  }
}

\begin{abstract}

We theoretically and computationally compare the strength of the three main upper bounds from the literature on the optimal value of the Edge-Weighted Maximum Clique Problem (EWMCP). We provide a set of instances for which the ratio between any of the three upper bounds and the optimal value of the EWMCP is unbounded, showing that none of them can give a performance guarantee.
We further analyze the relative strength among the three upper bounds by determining, for every choice of a ratio between any two of them, the largest values it can attain and providing families of instances for which such values can be reached. Our results show that, for each pair of upper bounds, there exist appropriately chosen instances on which either bound is tighter than the other.
Our theoretical analysis is complemented by extensive computational experiments on two benchmark datasets: the standard DIMACS instances and randomly generated instances, providing practical insights into the empirical strength of the upper bounds.
\end{abstract}

\begin{keyword}
   Graph Theory \sep Combinatorial Optimization  \sep Edge-Weighted Maximum Clique Problem 
\end{keyword}

\end{frontmatter}

\section{Introduction}
\label{sec:intro}

Let \( G = (V, E) \) be a \textit{simple undirected graph}, where \( V \) is the set of vertices, each labeled with a distinct natural number, and \( E \) is the set of edges connecting pairs of vertices. We denote an edge \( \{u,v\} \in E \), with \( u,v \in V \), by \( e \). We assume that each edge \( e \in E \) is associated with a nonnegative integer \emph{edge-weight} \( \pe_{e} \in \mathbb{Z}_{\ge 0} \).
Given a clique $C \subseteq V$ of $G$, namely a subset of pairwise connected vertices, its \emph{total edge-weight} is:
\begin{equation}
    \sum_{e\in E[C]} \pe_{e},  
\end{equation}
where $E[C] \subseteq E$ is the set of edges connecting the vertices in $C$.
The \emph{Edge-Weighted Maximum Clique Problem} (EWMCP) aims to determine a clique of the graph of maximum total edge-weight. The vector $\pev \in \mathbb{Z}_{\ge 0}^{|E|}$, containing all edge-weights, is called the \textit{edge-weight vector}. We denote an instance of the EWMCP by the pair $(G,\boldsymbol c)$. We denote by $\optimum$ the optimal value of the EWMCP and call it the \emph{edge-weighted clique number} of the graph. It is worth noticing that, since all edge-weights are nonnegative, there exists a maximum total edge-weight clique which is inclusion-wise maximal. Figure~\ref{fig:exampleSOL} introduces two EWMCP instances that are used throughout the remainder of the article and also illustrates their corresponding optimal solutions.

\begin{figure}
  \caption{
Two EWMCP instances with the same vertex set and edge set but different edge-weight vectors $\boldsymbol c_a$ and $\boldsymbol c_b$. 
The natural number labeling a vertex is represented inside the vertex, whereas the edge-weight of an edge is reported next to the edge.  In Part \subref{subfig:graph1}, the clique $\{4,5,6\}$, highlighted in green, is a maximum total edge-weight clique for the first instance. The edge-weighted clique number of the graph $\omega(G,\boldsymbol c_a)$ is $ 13$.
  In Part \subref{subfig:graph2}, the clique $\{1,2,3\}$, highlighted in green, is a maximum total edge-weight clique for the second instance. The edge-weighted clique number of the graph $\omega(G,\boldsymbol c_b)$ is $19$.
  \label{fig:exampleSOL}
  }
  \begin{subfigure}[b]{0.48\textwidth}
   \centering 
    \begin{tikzpicture}[scale=1.29]
        \footnotesize

        \pgfmathsetmacro{\a}{1.6}
        \pgfmathsetmacro{\result}{sqrt((\a)^2 - (0.5*\a)^2)}

        \draw (-3/2*\a,\result cm) coordinate (1);
        \draw (-0.5*\a,\result cm) coordinate (2);

        \draw (-\a,0cm) coordinate (3);
        \draw (\a,0cm) coordinate (6);

        \draw (0.5*\a,\result cm) coordinate (4);
        \draw (3/2*\a,\result cm) coordinate (5);

        \draw (-0.5*\a,-\result cm) coordinate (7);
        \draw (0.5*\a,-\result cm) coordinate (8);
        \draw (0,-2*\result cm) coordinate (9);

        \draw[-]  (1) -- node[label=above: $5$] {} (2);
        \draw[-]  (1) -- (3) node[midway, xshift = -2ex, yshift = -1ex] {$3$};
        \draw[-]  (2) -- (3) node[midway, xshift = 2ex, yshift = -1ex] {$2$};
        \draw[-]  (2) -- node[label=above: $2$] {} (4);
        \draw[-]  (3) -- (7) node[midway, xshift = -2ex, yshift = -1ex] {$5$};
        \draw[-]  (4) -- node[label=above: $3$] {} (5);
        \draw[-]  (4) -- (6) node[midway, xshift = -2ex, yshift = -1ex] {$3$};
        \draw[-]  (5) -- (6) node[midway, xshift = 2ex, yshift = -1ex] {$7$};
        \draw[-]  (6) -- (8) node[midway, xshift = 2ex, yshift = -1ex] {$9$};
        \draw[-]  (7) --  node[label=above: $4$] {} (8);
        \draw[-]  (7) -- (9) node[midway, xshift = -2ex, yshift = -1ex] {$3$};
        \draw[-]  (8) -- (9) node[midway, xshift = 2ex, yshift = -1ex] {$2$};

        \draw[circle,fill=white!25,minimum size=17pt,inner sep=0pt] (1) circle (9pt) node[label=center: $1$] {};
        \draw[circle,fill=white!25,minimum size=17pt,inner sep=0pt] (2) circle (9pt) node[label=center: $2$] {};
        \draw[circle,fill=white!25,minimum size=17pt,inner sep=0pt] (3) circle (9pt) node[label=center: $3$] {};
        \draw[circle,fill=green!25,minimum size=17pt,inner sep=0pt] (4) circle (9pt) node[label=center: $4$] {};
        \draw[circle,fill=green!25,minimum size=17pt,inner sep=0pt] (5) circle (9pt) node[label=center: $5$] {};          
        \draw[circle,fill=green!25,minimum size=17pt,inner sep=0pt] (6) circle (9pt) node[label=center: $6$] {};
        \draw[circle,fill=white!25,minimum size=17pt,inner sep=0pt] (7) circle (9pt) node[label=center: $7$] {};
        \draw[circle,fill=white!25,minimum size=17pt,inner sep=0pt] (8) circle (9pt) node[label=center: $8$] {};
        \draw[circle,fill=white!25,minimum size=17pt,inner sep=0pt] (9) circle (9pt) node[label=center: $9$] {};
      \end{tikzpicture}
\subcaption{$\omega(G,\boldsymbol c_a)=13$}
\label{subfig:graph1}
\end{subfigure}
\hfill
\begin{subfigure}[b]{0.48\textwidth}
   \centering 
    \begin{tikzpicture}[scale=1.29]
        \footnotesize

        \pgfmathsetmacro{\a}{1.6}
        \pgfmathsetmacro{\result}{sqrt((\a)^2 - (0.5*\a)^2)}

        \draw (-3/2*\a,\result cm) coordinate (1);
        \draw (-0.5*\a,\result cm) coordinate (2);

        \draw (-\a,0cm) coordinate (3);
        \draw (\a,0cm) coordinate (6);

        \draw (0.5*\a,\result cm) coordinate (4);
        \draw (3/2*\a,\result cm) coordinate (5);

        \draw (-0.5*\a,-\result cm) coordinate (7);
        \draw (0.5*\a,-\result cm) coordinate (8);
        \draw (0,-2*\result cm) coordinate (9);

        \draw[-]  (1) -- (2) node[midway, yshift = 3ex] {$9$};
        \draw[-]  (1) -- (3) node[midway, xshift = -2ex, yshift = -1ex] {$9$};
        \draw[-]  (2) -- (3) node[midway, xshift = 2ex, yshift = -1ex] {$1$};
        \draw[-]  (2) -- (4) node[midway, yshift = 3ex] {$4$};
        \draw[-]  (3) -- (7) node[midway, xshift = -2ex, yshift = -1ex] {$1$};
        \draw[-]  (4) -- (5) node[midway, yshift = 3ex] {$1$};
        \draw[-]  (4) -- (6) node[midway, xshift = -2ex, yshift = -1ex] {$7$};
        \draw[-]  (5) -- (6) node[midway, xshift = 2ex, yshift = -1ex] {$9$};
        \draw[-]  (6) -- (8) node[midway, xshift = 2ex, yshift = -1ex] {$9$};
        \draw[-]  (7) -- (8) node[midway, yshift = 3ex]{$3$};
        \draw[-]  (7) -- (9) node[midway, xshift = -2ex, yshift = -1ex] {$6$};
        \draw[-]  (8) -- (9) node[midway, xshift = 2ex, yshift = -1ex] {$7$};

        \draw[circle,fill=green!25,minimum size=17pt,inner sep=0pt] (1) circle (9pt) node[label=center: $1$] {};
        \draw[circle,fill=green!25,minimum size=17pt,inner sep=0pt] (2) circle (9pt) node[label=center: $2$] {};
        \draw[circle,fill=green!25,minimum size=17pt,inner sep=0pt] (3) circle (9pt) node[label=center: $3$] {};
        \draw[circle,fill=white!25,minimum size=17pt,inner sep=0pt] (4) circle (9pt) node[label=center: $4$] {};
        \draw[circle,fill=white!25,minimum size=17pt,inner sep=0pt] (5) circle (9pt) node[label=center: $5$] {};          
        \draw[circle,fill=white!25,minimum size=17pt,inner sep=0pt] (6) circle (9pt) node[label=center: $6$] {};
        \draw[circle,fill=white!25,minimum size=17pt,inner sep=0pt] (7) circle (9pt) node[label=center: $7$] {};
        \draw[circle,fill=white!25,minimum size=17pt,inner sep=0pt] (8) circle (9pt) node[label=center: $8$] {};
        \draw[circle,fill=white!25,minimum size=17pt,inner sep=0pt] (9) circle (9pt) node[label=center: $9$] {};
      \end{tikzpicture}
\subcaption{$\omega(G,\boldsymbol c_b)=19$}
\label{subfig:graph2}
\end{subfigure}
\end{figure}

The EWMCP is an important generalization of the classical \textit{Maximum Clique Problem} (MCP), which aims to determine a clique in a graph with the largest possible number of vertices. We refer the interested reader to \cite{SANSEGUNDO20231008} for the state-of-the-art exact algorithm for the MCP. We denote by $\omega(G)$ the optimal value of the MCP and refer to it as the \emph{clique number} of the graph. Since the MCP is strongly \(\mathcal{NP}\)-hard, the same holds for the EWMCP \citep{gary1979computers, karp2010reducibility}.

The EWMCP has several real-world applications across many domains, such as robotics, computer vision and materials science \citep{agapito2016accurate, ma2012maximum, san2015novel, san2013improved}. In many of these applications, edge weights encode the strength, similarity, or reliability of pairwise relationships, so that the goal is to identify a clique that is not only mutually compatible, but also maximizes the overall quality of the selected configuration.
We refer the interested reader to other relevant weighted clique-related problems; see, for instance, \cite{CONIGLIO2021435,CORNAZ2017151,DELLEDONNE202139,FURINI2019112,FURINI202154,MATTIA202448,SANSEGUNDO201918}.
 
 Over the past decades, numerous exact approaches have been proposed in the literature to solve the EWMCP to proven optimality \citep{lei2015unified, prokopyev2009equitable, san2019new, shimizu2019branch, shimizu2020maximum}. The main state-of-the-art techniques are combinatorial branch-and-bound algorithms that efficiently explore the search space through an implicit enumeration \citep{san2019new, shimizu2019branch, shimizu2020maximum, hosseinian2020lagrangian}. These algorithms rely on effective bounding techniques to limit the search space. 
 
There are three main upper bounds for the optimal value of the EWMCP, which constitute the basis of the bounding procedures used in the exact algorithms proposed by \citet{san2019new}, \citet{shimizu2020maximum}, and \citet{hosseinian2020lagrangian}. Recall that an \emph{independent set} is a subset of pairwise non-adjacent vertices, and that a \emph{coloring} of the vertices is an assignment of colors to the vertices such that no two adjacent vertices receive the same color. Equivalently, it can be viewed as a partition of the vertex set into independent sets, each corresponding to the set of vertices receiving the same color. Since any feasible solution of the EWMCP is a clique, it can contain at most one vertex from each independent set in a coloring of the vertices. This observation plays a key role in the construction of all three upper bounds, although in different ways. The first upper bound is derived from the \emph{Linear Programming} (LP) relaxation of an \emph{Integer Linear Programming} (ILP) formulation of the EWMCP and exploits a coloring of the vertices to define the variables of the model. The second and third upper bounds are purely combinatorial: the former exploits a coloring of the vertices together with an ordering of the corresponding independent sets, while the latter relies on an upper bound on the clique number of the graph, which in practice can also be obtained from a coloring, since the number of colors provides an upper bound on the clique number.
The first two upper bounds are formally defined in Section~\ref{sec:UB}, while the third is defined in Section~\ref{sec:UB3}.To the best of our knowledge, these three upper bounds were developed independently and in parallel, and no comparison among them has been reported in the literature.

\subsection{Performance comparison of exact algorithms for the EWMCP}

From an algorithmic viewpoint, the three upper bounds are at the core of three state-of-the-art exact branch-and-bound algorithms for the EWMCP: \textsc{BBEWC} by \citet{san2019new}, \textsc{MECQ+PLS} by \citet{shimizu2020maximum}, and the algorithm proposed by \citet{hosseinian2020lagrangian}, which we denote by \textsc{HFB}. It is worth noticing that the computational experiments reported in the corresponding papers were performed on different machines and with different time limits, namely 2 hours for \textsc{BBEWC} and 3 hours for both \textsc{MECQ+PLS} and \textsc{HFB}. According to the results reported in the three papers, there are 48 DIMACS instances for which at least one of the three algorithms finds a certified optimal solution within its time limit. 
%
Among these, \textsc{BBEWC} solves 36 instances, \textsc{MECQ+PLS} solves 47, and \textsc{HFB} solves 38. Moreover, among the 48 instances, 9 are solved exclusively by \textsc{MECQ+PLS} and 1 exclusively by \textsc{BBEWC}, whereas no instances are solved only by \textsc{HFB} within the time limit.


\subsection{Main contributions of the paper}

The purpose of this article is to compare the upper bounds at a fundamental level: from a theoretical viewpoint, we investigate whether they can provide any performance guarantee with respect to $\optimum$ and whether there exist instances in which one bound is arbitrarily better than another, whereas from a computational viewpoint, we assess their behavior on the benchmark instances classically used in the literature for the EWMCP.

Our first theoretical result, presented in \Cref{sec:WCA}, shows in \Cref{sec:ub1_vs_optimum,sec:ub2_vs_optimum} that the ratio between either of the first two upper bounds and the optimal value of the EWMCP is unbounded. Our second theoretical result, in \Cref{sec:THEOR_COMP}, compares the first two upper bounds in both directions: in \Cref{sec:ub2_vs_ub1} we define a family of instances in which the second upper bound can be arbitrarily worse than the first, whereas in \Cref{sec:ub1_vs_ub2} we exhibit the opposite behavior. These results, while ruling out the possibility of obtaining any relative performance guarantee with respect to the optimal value of the EWMCP, offer insight into the structural properties of the graphs that influence the strength of the first two upper bounds.

In \Cref{sec:UB3}, we introduce the third upper bound on $\optimum$. In \Cref{sec:ub2_vs_ub3}, we compare it with the second upper bound: we prove that the second never exceeds twice the value of the third, that this factor is asymptotically tight, and that the third can be arbitrarily worse than the second. In \Cref{sec:ub3_vs_ub1}, we show that the first and the third upper bounds are incomparable, as either can be arbitrarily worse than the other. Finally, in \Cref{sec:ub3_vs_optimum}, we prove that the third upper bound also lacks a performance guarantee, as its ratio to $\optimum$ can be unbounded.

On the computational side, in \Cref{sec:COMP_COMP} we report extensive results on two sets of instances to empirically assess the relative strength of the three upper bounds. After introducing the benchmark datasets in \Cref{sec:benchmark_datasets}, we first analyze in \Cref{sec:UB2_ordering} the sensitivity of $\UB_2$ to the ordering of the independent sets in the coloring, showing that sorting them in non-decreasing order of cardinality consistently yields the best overall performance and effectively strengthens the practical behavior of this bound. We then report the main computational results on the classical DIMACS benchmark set in \Cref{sec:dimacs_results} and on randomly generated instances in \Cref{sec:random_tests}, showing that the upper bound by \citet{shimizu2020maximum} dominates, in practice, those by \citet{san2019new} and \citet{hosseinian2020lagrangian}. Next, in \Cref{sec:branching_study} we evaluate the three bounds within a simulated branch-and-bound framework, measuring the tree depth at which each bound first drops to or below the optimal value, thereby allowing backtracking. Finally, in \Cref{sec:LP_comparison} we compare the first upper bound with other LP-based upper bounds on $\optimum$ derived from ILP formulations for the EWMCP proposed in the literature. In \Cref{sec:CONCL}, we draw the main conclusions, highlighting the structural properties of graphs that favor one upper bound over the others, and outlining directions for future research.

\section{The first two upper bounds on $\optimum$}\label{sec:UB}

In this section, we define the first two upper bounds on~$\optimum$ that are analyzed and compared in this article. For completeness, we also include a proof of validity for each of the two upper bounds, following the derivations from the original works in which they were defined.

To this end, we first introduce the necessary notation. Let $I \subseteq V$ be a subset of pairwise non-adjacent vertices, called an \emph{independent set}. Let $\mathscr{I}$ denote the collection of all independent sets of the graph. In addition, let $\mathscr{C} \subseteq \mathscr{I}$ be a partition of the vertex set $V$ into 
disjoint, non-empty independent sets.
We call such a partition a \textit{coloring} of the vertices of the graph. We interpret the data of a coloring as an assignment of colors to the vertices such that any pair of connected vertices receives a different color.

\subsection{The first upper bound}
\label{sec:UB1}
Following \citet{san2019new}, we describe in this section the derivation of the first upper bound on~$\optimum$, which is based on a LP relaxation of an ILP formulation of the EWMCP, with the constraints restricted to a given coloring.

For every $u \in V$, let $x_u$ be a binary variable that takes value $1$ if and only if vertex $u$ is selected in the clique. For every edge $e \in E$, let $y_{e}$ be a binary variable that takes value $1$ if and only if both its endpoints $u$ and $v$ are selected in the clique. Using these binary variables, an ILP formulation for the EWMCP reads as follows:
\begin{subequations}
\label{form:EWMCP_ILP}
\begin{align}
\label{eq:form:EWMCP_ILP:obj}
&&\max \quad   \sum_{ e \in {E}} c_{e}y_{e} \\
\label{eq:form:EWMCP_ILP:constrEdgeFirst}
&&~~~  y_{e} &\le x_u, &e \in E,\\[1.2ex]
\label{eq:form:EWMCP_ILP:constrEdgeSecond}
&&~ y_{e} &\le x_v,~    &e \in E, \\[1.2ex]
\label{eq:form:EWMCP_ILP:constrInverse}
&&~ y_{e} &\ge x_u + x_v - 1,~    &e \in E, \\[1.2ex]
\label{eq:form:EWMCP_ILP:constrIndependents}
&& \sum_{u\in I} x_u &\leq 1,~   &I\in\mathscr{I},\\[1.2ex]
\label{eq:form:EWMCP_ILP:var1}
&&~~~  x_{u} &\in \{0,1\}, &u \in V,\\[1.2ex]
\label{eq:form:EWMCP_ILP:var2}
&&~~~  y_{e} &\in \{0,1\}, &e \in E.
\end{align} 
\end{subequations}
The objective function \eqref{eq:form:EWMCP_ILP:obj} of Model~\eqref{form:EWMCP_ILP} maximizes the total edge-weight of the selected clique. Constraints \eqref{eq:form:EWMCP_ILP:constrEdgeFirst}, \eqref{eq:form:EWMCP_ILP:constrEdgeSecond}, and \eqref{eq:form:EWMCP_ILP:constrInverse} linearize the following logical implication: an edge $e \in E$ is part of the selected clique if and only if both its endpoints $u, v \in V$ are part of the selected clique. 
{Since all edge-weights are nonnegative, Constraints~\eqref{eq:form:EWMCP_ILP:constrInverse} can be dropped without affecting the set of optimal solutions of the model. For this reason, they are omitted in the remainder of this article.}
Finally, Constraints \eqref{eq:form:EWMCP_ILP:constrIndependents} impose that, for every independent set of the graph, at most one vertex can belong to the selected clique. This ensures that the feasible solutions of Model~\eqref{form:EWMCP_ILP} correspond to cliques in the graph. The optimal value of Model~\eqref{form:EWMCP_ILP} is the edge-weighted clique number~$\optimum$. 
It is worth noting that, without loss of generality, the binary constraints on the $\boldsymbol{y}\in\{0,1\}^{|E|}$ variables can be dropped, since they are implicitly enforced by the binary nature of the $\boldsymbol{x}\in\{0,1\}^{|V|}$ variables and the structure of the objective function. Therefore, the $\boldsymbol{y}$ variables can be considered as continuous free variables without affecting the correctness of the model.

By imposing the last set of constraints of Model~\eqref{form:EWMCP_ILP} only on a subset $\tilde{\mathscr{I}}$ of $\mathscr{I}$, where each vertex $u \in V$ is contained in at least one independent set of $\tilde{\mathscr{I}}$, and by relaxing the integrality constraints on the variables~$\boldsymbol{x}$ with nonnegativity constraints, we obtain a LP relaxation of the ILP Model~\eqref{form:EWMCP_ILP}. 

To derive the dual of this LP relaxation, we introduce, for each edge $e \in E$, two dual variables $\varrho_{eu}$ and $\varrho_{ev}$ associated with the Constraints \eqref{eq:form:EWMCP_ILP:constrEdgeFirst} and \eqref{eq:form:EWMCP_ILP:constrEdgeSecond}, respectively. In addition, for each $I \in \tilde{\mathscr{I}}$, we define a dual variable $\pi_I$ corresponding to the associated Constraint \eqref{eq:form:EWMCP_ILP:constrIndependents}. 
The dual of the LP relaxation described above reads as follows:
\begin{subequations}\label{form:LP-italia-spagna}
    \begin{align}
        \label{obj:LP-italia-spagna_splitting}
        &&\OSBoundGeneral ~:=~\min ~~~\sum_{I \in \tilde{\mathscr{I}}}
         \pi_I \\[2ex]
         \label{con:LP-italia-spagna_splitting} 
        && \varrho_{eu} + \varrho_{ev} \, &= \, c_{e},  & &\quad e \in E, \\[2.5ex]
         \label{con:LP-italia-spagna_max-w}
        && \sum_{I \in \tilde{\mathscr{I}} : \, u \in I} \pi_I & \ge \sum_{e \in \delta(u)} \varrho_{eu}, &  &\quad u \in V,\\[2.5 ex]
         && \pi_{I}  &\ge 0,  & &\quad  I \in \tilde{\mathscr{I}}, \\[2.5ex]
         && \varrho_{ev}  &\ge 0,  & &\quad  e \in E, \\[2.5ex]
         && \varrho_{eu}  &\ge 0,  & &\quad  e \in E, 
    \end{align}
\end{subequations}
where $\delta(u)\subseteq E$ is the set of all edges incident to vertex $u\in V$.
By LP duality, the optimal value of Model~\eqref{form:LP-italia-spagna} provides an upper bound on~$\optimum$.

It is worth noting that the values of the variables~$\boldsymbol{\varrho}$, in any optimal solution of Model~\eqref{form:LP-italia-spagna}, can be interpreted as distributing the weight of each edge among its two endpoints. This interpretation is then used in the remainder of the article to illustrate the examples, and to make a comparison with the second upper bound. In more detail, for each edge $e \in E$, the optimal value of the variables $\varrho_{eu}$ and $\varrho_{ev}$ represent the portions of the edge-weight~$c_{e}$ assigned to the endpoints~$u$ and~$v$, respectively. Moreover, for every $u \in V$, the quantity
\begin{equation}\label{eq:vertex-weight-gamma}
  \gamma_{u}:=  \sum_{e \in \delta(u)} \varrho_{eu}
\end{equation}
can be interpreted as the total weight assigned to vertex~$u$. 

When $\tilde{\mathscr{I}} = \mathscr{C}$, as in \citet{san2019new}, Constraints~\eqref{con:LP-italia-spagna_max-w} can be written as $$\pi_I \ge \sum_{e \in \delta(u)} \varrho_{eu} \qquad I \in \mathscr{C}, \, u \in I,$$ and imply that, for each $I \in \mathscr{C}$, the variable~$\pi_I$ must be at least as large as the weight $\gamma_u$ for every~$u\in I$.
In this interpretation, each variable~$\pi_I$ represents then the weight of the independent set~$I$, and the objective function~\eqref{obj:LP-italia-spagna_splitting} computes the total weight across all independent sets in the coloring~$\mathscr{C}$. Then we have:
\begin{equation} \label{eq:ita-spa}
   \OSBound = \sum_{I \in \mathscr{C}} \, \max \, \{\, \gamma_u: u \in I \, \}.
\end{equation}

Note that the upper bound employed in the branch-and-bound algorithm of \citet{san2019new} corresponds to a heuristic solution of Model~\eqref{form:LP-italia-spagna}, where $\tilde{\mathscr{I}}$ is a suitably defined vertex coloring $\mathscr{C}$, and it is computed on subgraphs generated during the branching process. The proposed heuristic procedure is designed to reduce the computational effort while computing good-quality upper bounds.
In contrast, in this paper we focus on evaluating the strength of the upper bound itself. To this end, we consider $\OSBound$, which corresponds to the best possible upper bound within this family.


\subsection{The second upper bound}
\label{sec:UB2}

Following \citet{shimizu2020maximum}, we describe in this section the derivation of the second upper bound on~$\optimum$, obtained by exploiting the structure of feasible EWMCP solutions and a coloring of the vertices of the graph.

Given a coloring $\mathscr{C} \subseteq \mathscr{I}$, let $\prec$ be a total order on $\mathscr{C}$, and let $I(u) \in \mathscr{C}$ be the unique independent set of $\mathscr{C}$ containing vertex $u \in V$, i.e., $u \in I(u)$. 

For every $u \in V$, we define the quantity
\begin{equation}\label{eq:jap_weight}
    \sigma_u := \sum_{I \in \mathscr{C} \, : \, I \prec I(u)} \max \left\{\, c_e : e \in \delta(u),\ v \in I \,\right\},
\end{equation}
which can be interpreted as the total weight assigned to vertex~$u$. This weight corresponds to the sum, over all independent sets that precede the one containing~$u$ in the ordering, of the maximum edge-weight connecting~$u$ to a vertex in each of those independent sets.

The second upper bound studied in this article, denoted by~$\JAPbound$, is then defined as:
\begin{equation} \label{eq:jap}
    \JAPbound := \sum_{I \in \mathscr{C}} \max \{\, \sigma_u : u \in I \,\}.
\end{equation}

To verify that this is indeed an upper bound for~$\optimum$, consider any clique $C$ in~$G$. By construction, each vertex in the clique~$C$ belongs to a different independent set in the coloring~$\mathscr{C}$. 
For every independent set $I \in \mathscr{C}$, let $u_I$ be the unique vertex in $C$ that belongs to $I$, if such a vertex exists; otherwise, let $u_I$ be any vertex in $I$. We then obtain:

\begin{subequations}
\begin{align}
\sum_{e \in E[C]} c_e 
&= \sum_{\substack{I \in \mathscr{C}: \\ C \cap I \neq \emptyset}} \quad \sum_{\substack{J \in \mathscr{C}: \\ J \prec I, C \cap J \neq \emptyset}} c_{\{u_I, u_J\}} \label{eq:jap-0st-row}\\[3ex]
&\le \sum_{\substack{I \in \mathscr{C}: \\ C \cap I \neq \emptyset}} \max \left\{ \sum_{\substack{J \in \mathscr{C}: \\J \prec I}} c_{\{u, u_J\}} : u \in I \right\} \label{eq:jap-1st-row}\\[2ex]
&\le \sum_{\substack{I \in \mathscr{C}: \\ C \cap I \neq \emptyset}} \max \left\{  \sum_{\substack{J \in \mathscr{C}: \\J \prec I}} \max \left\{ c_e : e \in\delta(u),\ v \in J \right\}  : u \in I \right\} \label{eq:jap-2nd-row}\\[4ex]
&\le \underbrace{ \sum_{I \in \mathscr{C}} \max \{ \sigma_u : u \in I \} }_{= \JAPbound}. \label{eq:jap-3rd-row}
\end{align}
\end{subequations}

The equality in~\eqref{eq:jap-0st-row} rewrites the sum of the weights of the edges in the clique~$C$ by indexing over pairs of independent sets in~$\mathscr{C}$ that contain vertices of~$C$. 
The inequality in~\eqref{eq:jap-1st-row} is obtained by maximizing the inner sum over all possible choices of a vertex $u \in I$ and by extending the sum to all independent sets $J \prec I$. 
The inequality in~\eqref{eq:jap-2nd-row} follows by maximizing each term of the inner sum over all possible choices of a vertex $v \in J$.
Finally, the inequality in~\eqref{eq:jap-3rd-row} is obtained by extending the sum to all independent sets in~$\mathscr{C}$.
This derivation follows the proof of \cite[Lemma~1]{shimizu2020maximum}.

Note that the upper bound $\JAPbound$ depends not only on a coloring, but also on the total order $\prec$ of the independent sets within the coloring. Using the same partition into independent sets, but varying the ordering of the independent sets, could result in different upper bounds. In 
\Cref{sec:UB2_ordering} we empirically measure the impact of different orderings on the quality of this upper bound.

It is worth mentioning that \citet{shimizu2020maximum} also proposed an alternative upper bound, which can be interpreted as the value of a specific feasible solution to the dual formulation~\eqref{form:LP-italia-spagna}. The main idea is to assign the full weight of each edge \( e \in E \) to the endpoint belonging to the independent set which precedes the other in $\mathscr{C}$ according to the total order $\prec$.
Assuming without loss of generality that for each edge \( e \in E \), we have \( I(u) \prec I(v) \), this feasible solution is defined as follows:
\[
   \tilde{\varrho}_{eu} := c_e, \qquad
   \tilde{\varrho}_{ev} := 0, \quad  e \in E, \qquad
   \tilde{\pi}_I := \max\left\{ \sum_{e \in \delta(u)} \tilde{\varrho}_{eu} \colon u \in I\right\}, \quad  I \in \mathscr{C}.
\]

The upper bound obtained from this dual solution is, by construction, greater than or equal to the optimal value of the dual Model \eqref{form:LP-italia-spagna}. Accordingly, the second upper bound proposed by \citet{shimizu2020maximum} is dominated by~\(\OSBound\), and is therefore not considered in our study.

\subsection{Examples of the upper bounds computation}\label{sc:examples-figures}

In Figure~\ref{fig:example_os_jap}, we report the values of the first two upper bounds for the graph shown in Figure~\ref{fig:exampleSOL} and its two different edge-weight vectors. The figure also includes all the values needed to compute the upper bounds, as described in Sections~\ref{sec:UB1} and~\ref{sec:UB2}. These examples illustrate that there exist graphs, edge-weight vectors, and colorings for which~$\OSBound$ is smaller than~$\JAPbound$ (see Parts \ref{subfig:graph1_itaspa} and~\ref{subfig:graph2_itaspa} of Figure~\ref{fig:example_os_jap}), and vice versa (see Parts \ref{subfig:graph1_jap} and~\ref{subfig:graph2_jap} of Figure~\ref{fig:example_os_jap}).

\begin{figure}
\caption{%
The values of the two upper bounds $\OSBound$ and $\JAPbound$ for the instances in Figure~\ref{fig:exampleSOL}, with respect to the following coloring: $\{3,6,9\}$ (yellow), $\{1,4,8\}$ (red), $\{2,5,7\}$ (green). The total order is given by $\{3,6,9\} \prec \{1,4,8\} \prec \{2,5,7\}$. For each edge $e \in E$, the black label next to the edge shows its weight --- in Parts~\ref{subfig:graph1_itaspa} and~\ref{subfig:graph1_jap}, this corresponds to the edge-weight vector $\boldsymbol c_a$ of Part~\ref{subfig:graph1} of Figure~\ref{fig:exampleSOL}; in Parts~\ref{subfig:graph2_itaspa} and~\ref{subfig:graph2_jap}, to the edge-weight vector $\boldsymbol c_b$ of Part~\ref{subfig:graph2} of Figure~\ref{fig:exampleSOL}.
In Parts~\ref{subfig:graph1_itaspa} and~\ref{subfig:graph2_itaspa}, for each edge $e \in E$, the blue labels on the edge report the values of the variables $\varrho_{eu}$ and $\varrho_{ev}$ in an optimal solution of Model~\eqref{form:LP-italia-spagna}. For each vertex $u \in V$, the value $\gamma_u$ (see the definition~\eqref{eq:vertex-weight-gamma}) is shown in purple next to the vertex. For each color $I \in \mathscr{C}$, the vertex $u \in I$ with the maximum weight $\gamma_u$ is highlighted in bold; this value corresponds to the value of the variable $\pi_I$ in the same optimal solution of Model~\eqref{form:LP-italia-spagna}.
In Parts~\ref{subfig:graph1_jap} and~\ref{subfig:graph2_jap}, for each vertex $u \in V$, the value $\sigma_u$ (see the definition~\eqref{eq:jap_weight}) is shown in purple next to the vertex. Additionally, the edges contributing to $\sigma_u$ are drawn in the same color as vertex~$u$. For each color $I \in \mathscr{C}$, the vertex $u \in I$ with the maximum weight $\sigma_u$ is highlighted in bold.
\label{fig:example_os_jap}
}

\begin{subfigure}[b]{0.48\textwidth}%
\centering
  \begin{tikzpicture}[scale=1.29]
        \footnotesize

        \pgfmathsetmacro{\a}{1.6}
        \pgfmathsetmacro{\result}{sqrt((\a)^2 - (0.5*\a)^2)}

        \draw (-3/2*\a,\result cm) coordinate (1);
        \draw (-0.5*\a,\result cm) coordinate (2);

        \draw (-\a,0cm) coordinate (3);
        \draw (\a,0cm) coordinate (6);

        \draw (0.5*\a,\result cm) coordinate (4);
        \draw (3/2*\a,\result cm) coordinate (5);

        \draw (-0.5*\a,-\result cm) coordinate (7);
        \draw (0.5*\a,-\result cm) coordinate (8);
        \draw (0,-2*\result cm) coordinate (9);

        \draw[->]  (1) -- (2) node[midway, yshift = 3ex] {$5$};
        \draw[->]  (1) -- (3) node[midway, xshift = -2ex, yshift = -1ex] {$3$};
        \draw[->]  (2) -- (3) node[midway, xshift = 2ex, yshift = -1ex] {$2$};
        \draw[->]  (2) -- (4) node[midway, yshift = 3ex] {$2$};
        \draw[->]  (3) -- (7) node[midway, xshift = -2ex, yshift = -1ex] {$5$};
        \draw[->]  (4) -- (5) node[midway, yshift = 3ex] {$3$};
        \draw[->]  (4) -- (6) node[midway, xshift = -2ex, yshift = -1ex] {$3$};
        \draw[->]  (5) -- (6) node[midway, xshift = 2ex, yshift = -1ex] {$7$};
        \draw[->]  (6) -- (8) node[midway, xshift = 2ex, yshift = -1ex] {$9$};
        \draw[->]  (7) -- (8) node[midway, yshift = 3ex]{ $4$};
        \draw[->]  (7) -- (9) node[midway, xshift = -2ex, yshift = -1ex] {$3$};
        \draw[->]  (8) -- (9) node[midway, xshift = 2ex, yshift = -1ex] {$2$};

         \draw[-] (1) -- (2) node[midway,xshift=-2ex, fill=white, inner sep=1pt, text=mblue] {\scriptsize $4$}
        node[midway,xshift=2ex, fill=white, inner sep=1pt, text=mblue] {\scriptsize $1$};
        \draw[-] (1) -- (3) node[midway,xshift=-1ex,yshift=1.5ex, fill=white, inner sep=1pt, text=mblue] {\scriptsize $2$}
        node[midway,xshift=1ex,yshift=-1.5ex, fill=white, inner sep=1pt, text=mblue] {\scriptsize $1$};
        \draw[-] (2) -- (3) node[midway,xshift=-1ex,yshift=-1.5ex, fill=white, inner sep=1pt, text=mblue] {\scriptsize $0$}
        node[midway,xshift=1ex,yshift=1.5ex, fill=white, inner sep=1pt, text=mblue] {\scriptsize $2$};
        \draw[-] (2) -- (4) node[midway,xshift=-2ex, fill=white, inner sep=1pt, text=mblue] {\scriptsize $2$}
        node[midway,xshift=2ex, fill=white, inner sep=1pt, text=mblue] {\scriptsize $0$};
        \draw[-] (3) -- (7) node[midway,xshift=-1ex,yshift=1.5ex, fill=white, inner sep=1pt, text=mblue] {\scriptsize $4$}
        node[midway,xshift=1ex,yshift=-1.5ex, fill=white, inner sep=1pt, text=mblue] {\scriptsize $1$};
        \draw[-] (4) -- (5) node[midway,xshift=-2ex, fill=white, inner sep=1pt, text=mblue] {\scriptsize $3$}
        node[midway,xshift=2ex, fill=white, inner sep=1pt, text=mblue] {\scriptsize $0$};
        \draw[-] (4) -- (6) node[midway,xshift=1ex,yshift=-1.5ex, fill=white, inner sep=1pt, text=mblue] {\scriptsize $0$}
        node[midway,xshift=-1ex,yshift=1.5ex, fill=white, inner sep=1pt, text=mblue] {\scriptsize $3$};
        \draw[-] (5) -- (6) node[midway,xshift=1ex,yshift=1.5ex, fill=white, inner sep=1pt, text=mblue] {\scriptsize $5$}
        node[midway,xshift=-1ex,yshift=-1.5ex, fill=white, inner sep=1pt, text=mblue] {\scriptsize $2$};
        \draw[-] (6) -- (8) node[midway,xshift=-1ex,yshift=-1.5ex, fill=white, inner sep=1pt, text=mblue] {\scriptsize $6$}
        node[midway,xshift=1ex,yshift=1.5ex, fill=white, inner sep=1pt, text=mblue] {\scriptsize $3$};
        \draw[-] (7) -- (8) node[midway,xshift=-2ex, fill=white, inner sep=1pt, text=mblue] {\scriptsize $4$}
        node[midway,xshift=2ex, fill=white, inner sep=1pt, text=mblue] {\scriptsize $0$};
        \draw[-] (7) -- (9) node[midway,xshift=-1ex,yshift=1.5ex, fill=white, inner sep=1pt, text=mblue] {\scriptsize $0$}
        node[midway,xshift=1ex,yshift=-1.5ex, fill=white, inner sep=1pt, text=mblue] {\scriptsize $3$};
        \draw[-] (8) -- (9) node[midway,xshift=1ex,yshift=1.5ex, fill=white, inner sep=1pt, text=mblue] {\scriptsize $0$}
        node[midway,xshift=-1ex,yshift=-1.5ex, fill=white, inner sep=1pt, text=mblue] {\scriptsize $2$};

        \draw[circle,fill=red!40,minimum size=17pt,inner sep=0pt,line width=1.5pt] (1) circle (9pt) node[label=center: $\boldsymbol1$] {};
        \draw[circle,fill=green!25,minimum size=17pt,inner sep=0pt] (2) circle (9pt) node[label=center: $2$] {};
        \draw[circle,fill=yellow!100!orange!55,minimum size=17pt,inner sep=0pt] (3) circle (9pt) node[label=center: $3$] {};
        \draw[circle,fill=red!25,minimum size=17pt,inner sep=0pt] (4) circle (9pt) node[label=center: $4$] {};
        \draw[circle,fill=green!50,minimum size=17pt,inner sep=0pt,line width=1.5pt] (5) circle (9pt) node[label=center: $\boldsymbol 5$] {};          
        \draw[circle,fill=yellow!100!orange!55,minimum size=17pt,inner sep=0pt] (6) circle (9pt) node[label=center: $6$] {};
        \draw[circle,fill=green!25,minimum size=17pt,inner sep=0pt] (7) circle (9pt) node[label=center: $7$] {};
        \draw[circle,fill=red!25,minimum size=17pt,inner sep=0pt] (8) circle (9pt) node[label=center: $8$] {};
        \draw[circle,fill=yellow!100!orange!75,minimum size=17pt,inner sep=0pt,line width=1.5pt]  (9) circle (9pt) node[label=center: $\boldsymbol9$] {};

        \foreach \pos/\label/\xshift/\yshift in {
        1/6/0em/1.5em,
        2/5/0em/1.5em,
        3/5/-1.5em/0em,
        4/6/0em/1.5em,
        5/5/0/1.5em,
        6/5/1.5em/0em,
        7/5/-1.5em/0em,
        8/6/1.5em/0em,
        9/5/-1.5em/0em
    }{
        \node[text=purple] at ([xshift=\xshift, yshift=\yshift]\pos) {\scriptsize $\label$};
    }
      \end{tikzpicture}
\subcaption{$\UB_1(G, \boldsymbol c_a, \mathscr{C})=16$}
\label{subfig:graph1_itaspa}
\end{subfigure}
\hfill
\begin{subfigure}[b]{0.48\textwidth}%
\centering
    \begin{tikzpicture}[scale=1.29]
    \footnotesize

        \pgfmathsetmacro{\a}{1.6}
        \pgfmathsetmacro{\result}{sqrt((\a)^2 - (0.5*\a)^2)}

        \draw (-3/2*\a,\result cm) coordinate (1);
        \draw (-0.5*\a,\result cm) coordinate (2);

        \draw (-\a,0cm) coordinate (3);
        \draw (\a,0cm) coordinate (6);

        \draw (0.5*\a,\result cm) coordinate (4);
        \draw (3/2*\a,\result cm) coordinate (5);

        \draw (-0.5*\a,-\result cm) coordinate (7);
        \draw (0.5*\a,-\result cm) coordinate (8);
        \draw (0,-2*\result cm) coordinate (9);
        
       \draw[-, green, line width =1.5pt]  (1) -- (2) node[midway, yshift = 2ex] {$\color{black}5$};
       \draw[-,red, line width =1.5pt]  (1) -- (3) node[midway, xshift = -2ex, yshift = -1ex] {$\color{black}3$};
        \draw[-, green, line width =1.5pt] (2) -- (3) node[midway, xshift = 2ex, yshift = -1ex] {$\color{black}2$};
        \draw[-]  (2) -- node[label=above: $2$] {} (4);
        \draw[-, green, line width =1.5pt]  (3) -- (7) node[midway, xshift = -2ex, yshift = -1ex] {$\color{black}5$};
        \draw[-, green, line width =1.5pt]  (4) -- node[label=above: $\color{black}3$] {} (5);
        \draw[-,red, line width =1.5pt]  (4) -- (6) node[midway, xshift = -2ex, yshift = -1ex] {$\color{black}3$};
        \ \draw[-, green, line width =1.5pt]  (5) -- (6) node[midway, xshift = 2ex, yshift = -1ex] {$\color{black}7$};
        \draw[-,red, line width =1.5pt]  (6) -- (8) node[midway, xshift = 2ex, yshift = -1ex] {$\color{black}9$};
        \draw[-, green, line width =1.5pt]  (7) --  node[label=above: $\color{black}4$] {} (8);
        \draw[-]  (7) -- (9) node[midway, xshift = -2ex, yshift = -1ex] {$3$};
        \draw[-]  (8) -- (9) node[midway, xshift = 2ex, yshift = -1ex] {$2$};

        \draw[circle,fill=red!25,minimum size=17pt,inner sep=0pt] (1) circle (9pt) node[label=center: $1$] {};
        \draw[circle,fill=green!25,minimum size=17pt,inner sep=0pt] (2) circle (9pt) node[label=center: $2$] {};
        \draw[circle,fill=yellow!100!orange!75,minimum size=17pt,inner sep=0pt,line width=1.5] (3) circle (9pt) node[label=center: $\boldsymbol 3$] {};
        \draw[circle,fill=red!25,minimum size=17pt,inner sep=0pt] (4) circle (9pt) node[label=center: $4$] {};
        \draw[circle,fill=green!50,minimum size=17pt,inner sep=0pt,line width=1.5] (5) circle (9pt) node[label=center: $\boldsymbol 5$] {};          
        \draw[circle,fill=yellow!100!orange!55,minimum size=17pt,inner sep=0pt] (6) circle (9pt) node[label=center: $6$] {};
        \draw[circle,fill=green!25,minimum size=17pt,inner sep=0pt] (7) circle (9pt) node[label=center: $7$] {};
        \draw[circle,fill=red!40,minimum size=17pt,inner sep=0pt,line width=1.5] (8) circle (9pt) node[label=center: $\boldsymbol 8$] {};
        \draw[circle,fill=yellow!100!orange!55,minimum size=17pt,inner sep=0pt] (9) circle (9pt) node[label=center: $9$] {};

        \foreach \pos/\label/\xshift/\yshift in {
        1/3/0em/1.5em,
        2/7/0em/1.5em,
        3/0/-1.5em/0em,
        4/3/0em/1.5em,
        5/10/0/1.5em,
        6/0/1.5em/0em,
        7/9/-1.5em/0em,
        8/9/1.5em/0em,
        9/0/-1.5em/0em
        }{
        \node[text=purple] at ([xshift=\xshift, yshift=\yshift]\pos) {\scriptsize $\label$};
        }

      \end{tikzpicture}
\subcaption{$\UB_2(G, \boldsymbol c_a, \mathscr{C}, \prec)=19$}
\label{subfig:graph1_jap} 
\end{subfigure}\medskip

\begin{subfigure}[b]{0.48\textwidth}
\centering
     \begin{tikzpicture}[scale=1.29]
        \footnotesize

        \pgfmathsetmacro{\a}{1.6}
        \pgfmathsetmacro{\result}{sqrt((\a)^2 - (0.5*\a)^2)}

        \draw (-3/2*\a,\result cm) coordinate (1);
        \draw (-0.5*\a,\result cm) coordinate (2);

        \draw (-\a,0cm) coordinate (3);
        \draw (\a,0cm) coordinate (6);

        \draw (0.5*\a,\result cm) coordinate (4);
        \draw (3/2*\a,\result cm) coordinate (5);

        \draw (-0.5*\a,-\result cm) coordinate (7);
        \draw (0.5*\a,-\result cm) coordinate (8);
        \draw (0,-2*\result cm) coordinate (9);

        \draw[->]  (1) -- (2) node[midway, yshift = 3ex] {$9$};
        \draw[->]  (1) -- (3) node[midway, xshift = -2ex, yshift = -1ex] {$9$};
        \draw[->]  (2) -- (3) node[midway, xshift = 2ex, yshift = -1ex] {$1$};
        \draw[->]  (2) -- (4) node[midway, yshift = 3ex] {$4$};
        \draw[->]  (3) -- (7) node[midway, xshift = -2ex, yshift = -1ex] {$1$};
        \draw[->]  (4) -- (5) node[midway, yshift = 3ex] {$1$};
        \draw[->]  (4) -- (6) node[midway, xshift = -2ex, yshift = -1ex] {$7$};
        \draw[->]  (5) -- (6) node[midway, xshift = 2ex, yshift = -1ex] {$9$};
        \draw[->]  (6) -- (8) node[midway, xshift = 2ex, yshift = -1ex] {$9$};
        \draw[->]  (7) -- (8) node[midway, yshift = 3ex]{$3$};
        \draw[->]  (7) -- (9) node[midway, xshift = -2ex, yshift = -1ex] {$6$};
        \draw[->]  (8) -- (9) node[midway, xshift = 2ex, yshift = -1ex] {$7$};
        
         \draw[-] (1) -- (2) node[midway,xshift=-2ex, fill=white, inner sep=1pt, text=mblue] {\scriptsize $8$}
        node[midway,xshift=2ex, fill=white, inner sep=1pt, text=mblue] {\scriptsize $1$};
        \draw[-] (1) -- (3) node[midway,xshift=-1ex,yshift=1.5ex, fill=white, inner sep=1pt, text=mblue] {\scriptsize $0$}
        node[midway,xshift=1ex,yshift=-1.5ex, fill=white, inner sep=1pt, text=mblue] {\scriptsize $9$};
        \draw[-] (2) -- (3) node[midway,xshift=1ex,yshift=1.5ex, fill=white, inner sep=1pt, text=mblue] {\scriptsize $0$}
        node[midway,xshift=-1ex,yshift=-1.5ex, fill=white, inner sep=1pt, text=mblue] {\scriptsize $1$};
        \draw[-] (2) -- (4) node[midway,xshift=-2ex, fill=white, inner sep=1pt, text=mblue] {\scriptsize $3$}
        node[midway,xshift=2ex, fill=white, inner sep=1pt, text=mblue] {\scriptsize $1$};
        \draw[-] (3) -- (7) node[midway,xshift=-1ex,yshift=1.5ex, fill=white, inner sep=1pt, text=mblue] {\scriptsize $0$}
        node[midway,xshift=1ex,yshift=-1.5ex, fill=white, inner sep=1pt, text=mblue] {\scriptsize $1$};
        \draw[-] (4) -- (5) node[midway,xshift=-2ex, fill=white, inner sep=1pt, text=mblue] {\scriptsize $0$}
        node[midway,xshift=2ex, fill=white, inner sep=1pt, text=mblue] {\scriptsize $1$};
        \draw[-] (4) -- (6) node[midway,xshift=-1ex,yshift=1.5ex, fill=white, inner sep=1pt, text=mblue] {\scriptsize $7$}
        node[midway,xshift=1ex,yshift=-1.5ex, fill=white, inner sep=1pt, text=mblue] {\scriptsize $0$};
        \draw[-] (5) -- (6) node[midway,xshift=1ex,yshift=1.5ex, fill=white, inner sep=1pt, text=mblue] {\scriptsize $3$}
        node[midway,xshift=-1ex,yshift=-1.5ex, fill=white, inner sep=1pt, text=mblue] {\scriptsize $6$};
        \draw[-] (6) -- (8) node[midway,xshift=1ex,yshift=1.5ex, fill=white, inner sep=1pt, text=mblue] {\scriptsize $4$}
        node[midway,xshift=-1ex,yshift=-1.5ex, fill=white, inner sep=1pt, text=mblue] {\scriptsize $5$};
        \draw[-] (7) -- (8) node[midway,xshift=-2ex, fill=white, inner sep=1pt, text=mblue] {\scriptsize $3$}
        node[midway,xshift=2ex, fill=white, inner sep=1pt, text=mblue] {\scriptsize $0$};
        \draw[-] (7) -- (9) node[midway,xshift=-1ex,yshift=1.5ex, fill=white, inner sep=1pt, text=mblue] {\scriptsize $0$}
        node[midway,xshift=1ex,yshift=-1.5ex, fill=white, inner sep=1pt, text=mblue] {\scriptsize $6$};
        \draw[-] (8) -- (9) node[midway,xshift=1ex,yshift=1.5ex, fill=white, inner sep=1pt, text=mblue] {\scriptsize $3$}
        node[midway,xshift=-1ex,yshift=-1.5ex, fill=white, inner sep=1pt, text=mblue] {\scriptsize $4$};

        \draw[circle,fill=red!40,minimum size=17pt,inner sep=0pt,line width=1.5pt] (1) circle (9pt) node[label=center: $\boldsymbol 1$] {};
        \draw[circle,fill=green!25,minimum size=17pt,inner sep=0pt] (2) circle (9pt) node[label=center: $2$] {};
        \draw[circle,fill=yellow!100!orange!55,minimum size=17pt,inner sep=0pt] (3) circle (9pt) node[label=center: $3$] {};
        \draw[circle,fill=red!25,minimum size=17pt,inner sep=0pt] (4) circle (9pt) node[label=center: $4$] {};
        \draw[circle,fill=green!50,minimum size=17pt,inner sep=0pt,line width=1.5pt] (5) circle (9pt) node[label=center: $\boldsymbol 5$] {};          
        \draw[circle,fill=yellow!100!orange!55,minimum size=17pt,inner sep=0pt] (6) circle (9pt) node[label=center: $6$] {};
        \draw[circle,fill=green!25,minimum size=17pt,inner sep=0pt] (7) circle (9pt) node[label=center: $7$] {};
        \draw[circle,fill=red!25,minimum size=17pt,inner sep=0pt] (8) circle (9pt) node[label=center: $8$] {};
        \draw[circle,fill=yellow!100!orange!75,minimum size=17pt,inner sep=0pt,line width=1.5pt] (9) circle (9pt) node[label=center: $\boldsymbol9$] {};

        \foreach \pos/\label/\xshift/\yshift in {
        1/8/0em/1.5em,
        2/4/0em/1.5em,
        3/10/-1.5em/0em,
        4/8/0em/1.5em,
        5/4/0/1.5em,
        6/10/1.5em/0em,
        7/4/-1.5em/0em,
        8/8/1.5em/0em,
        9/10/-1.5em/0em
    }{
        \node[text=purple] at ([xshift=\xshift, yshift=\yshift]\pos) {\scriptsize $\label$};
    }
    
      \end{tikzpicture}
\subcaption{$\UB_1(G, \boldsymbol c_b, \mathscr{C})=22$}
\label{subfig:graph2_itaspa}
\end{subfigure}
\hfill
\begin{subfigure}[b]{0.48\textwidth}
\centering
    \begin{tikzpicture}[scale=1.29]
        \footnotesize

        \pgfmathsetmacro{\a}{1.6}
        \pgfmathsetmacro{\result}{sqrt((\a)^2 - (0.5*\a)^2)}

        \draw (-3/2*\a,\result cm) coordinate (1);
        \draw (-0.5*\a,\result cm) coordinate (2);

        \draw (-\a,0cm) coordinate (3);
        \draw (\a,0cm) coordinate (6);

        \draw (0.5*\a,\result cm) coordinate (4);
        \draw (3/2*\a,\result cm) coordinate (5);

        \draw (-0.5*\a,-\result cm) coordinate (7);
        \draw (0.5*\a,-\result cm) coordinate (8);
        \draw (0,-2*\result cm) coordinate (9);

       \draw[-,green, line width =1.5pt]  (1) -- node[label=above: $\color{black}9$] {} (2);
        \draw[-,red, line width =1.5pt]  (1) -- (3) node[midway, xshift = -2ex, yshift = -1ex] {$\color{black}9$};
        \draw[-,green, line width =1.5pt]  (2) -- (3) node[midway, xshift = 2ex, yshift = -1ex] {$\color{black}1$};
        \draw[-]  (2) -- node[label=above: $\color{black}4$] {} (4);
        \draw[-]  (3) -- (7) node[midway, xshift = -2ex, yshift = -1ex] {$\color{black}1$};
        \draw[-,green, line width =1.5pt]  (4) -- node[label=above: $\color{black}1$] {} (5);
        \draw[-,red, line width =1.5pt]  (4) -- (6) node[midway, xshift = -2ex, yshift = -1ex] {$\color{black}7$};
        \draw[-,green, line width =1.5pt]  (5) -- (6) node[midway, xshift = 2ex, yshift = -1ex] {$\color{black}9$};
        \draw[-,red, line width =1.5pt]  (6) -- (8) node[midway, xshift = 2ex, yshift = -1ex] {$\color{black}9$};
        \draw[-,green, line width =1.5pt]  (7) --  node[label=above: $\color{black}3$] {} (8);
        \draw[-,green, line width =1.5pt]  (7) -- (9) node[midway, xshift = -2ex, yshift = -1ex] {$\color{black}6$};
        \draw[-]  (8) -- (9) node[midway, xshift = 2ex, yshift = -1ex] {$\color{black}7$};

        \draw[circle,fill=red!40,minimum size=17pt,inner sep=0pt,line width=1.5] (1) circle (9pt) node[label=center: $\boldsymbol 1$] {};
        \draw[circle,fill=green!25,minimum size=17pt,inner sep=0pt] (2) circle (9pt) node[label=center: $2$] {};
        \draw[circle,fill=yellow!100!orange!75,minimum size=17pt,inner sep=0pt,line width=1.5] (3) circle (9pt) node[label=center: $\boldsymbol 3$] {};
        \draw[circle,fill=red!25,minimum size=17pt,inner sep=0pt] (4) circle (9pt) node[label=center: $4$] {};
        \draw[circle,fill=green!50,minimum size=17pt,inner sep=0pt,line width=1.5] (5) circle (9pt) node[label=center: $\boldsymbol 5$] {};          
        \draw[circle,fill=yellow!100!orange!55,minimum size=17pt,inner sep=0pt] (6) circle (9pt) node[label=center: $6$] {};
        \draw[circle,fill=green!25,minimum size=17pt,inner sep=0pt] (7) circle (9pt) node[label=center: $7$] {};
        \draw[circle,fill=red!25,minimum size=17pt,inner sep=0pt] (8) circle (9pt) node[label=center: $8$] {};
        \draw[circle,fill=yellow!100!orange!55,minimum size=17pt,inner sep=0pt] (9) circle (9pt) node[label=center: $9$] {};

            \foreach \pos/\label/\xshift/\yshift in {
        1/9/0em/1.5em,
        2/10/0em/1.5em,
        3/0/-1.5em/0em,
        4/7/0em/1.5em,
        5/10/0/1.5em,
        6/0/1.5em/0em,
        7/9/-1.5em/0em,
        8/9/1.5em/0em,
        9/0/-1.5em/0em
    }{
        \node[text=purple] at ([xshift=\xshift, yshift=\yshift]\pos) {\scriptsize $\label$};
    }

    \end{tikzpicture}
    \subcaption{$\UB_2(G, \boldsymbol c_b, \mathscr{C}, \prec)=19$}
    \label{subfig:graph2_jap}
    \end{subfigure}
\end{figure}

\section{Theoretical comparison of the first two upper bounds with $\optimum$}
\label{sec:WCA}

In this section, we provide a sequence of EWMCP instances in which the ratio between either of the two upper bounds $\OSBound$ and $\JAPbound$ and the edge-weighted clique number becomes arbitrarily large as the number of vertices in the graphs tends to infinity.

For every integer \( n \ge 2 \), let \( G^1_n = (V^1_n, E^1_n) \) be a graph with a vertex set containing \( 2n \) vertices, partitioned into two cliques \( C_1, C_2 \subseteq V^1_n \), where
\[
C_1 = \{1, 2, \dots, n\}, \qquad C_2 = \{n+1, n+2, \dots, 2n\}.
\]
The edge set of the graph includes all edges within the two cliques, each assigned an edge-weight of~1. Moreover, for each vertex \( u \in C_1 \), there is an edge \( \{u, n+u\} \) with edge-weight
\[
\bar{c} = \frac{n(n-1)}{2} - 1,
\]
and, by construction, the vertex \( n+u \) belongs to \( C_2 \). These edge-weights define the edge-weight vector \( \boldsymbol{c}^1_n \) for the graph \( G^1_n \).
An illustration of \( G^1_n \) is provided in Figure~\ref{fig:exampleBAD}.

\begin{figure}
    \centering

     \caption{ 
  Representation of the graph \( G^1_n \), where the vertices are colored using the minimum number of colors. Each vertex is labeled with a natural number displayed inside the node. The two ellipses represent the cliques \( C_1 \) and \( C_2 \). The edge-weight \( \bar{c} \) of each edge connecting a vertex in \( C_1 \) to a vertex in \( C_2 \) is shown next to the edge. All edges within each clique have weight one; these values are omitted from the figure.
  \label{fig:exampleBAD}
}
  \begin{tikzpicture}[scale=1.29]

        \pgfmathsetmacro{\a}{1.6}
        \pgfmathsetmacro{\r}{sqrt((\a)^2 - (0.5*\a)^2)}

        \draw (1,0) coordinate (1);
        \draw (1,-\r) coordinate (2);
        \draw (1,-3*\r) coordinate (3);

        \draw (5,0) coordinate (4);
        \draw (5,-\r) coordinate (5);
        \draw (5,-3*\r) coordinate (6);

        \draw (3, -2*\r) node {$\spacedvdots$};
        \draw (1, -2*\r) node {$\spacedvdots$};
        \draw (5, -2*\r) node {$\spacedvdots$};

        \draw[-]  (1) -- (4) node[above, midway] {$\bar{\pe}$};
        \draw[-]  (2) -- (5) node[above, midway] {$\bar{\pe}$};
        \draw[-]  (3) -- (6) node[above, midway] {$\bar{\pe}$};

        \draw[-]  (1) -- (2) ;
        \draw[-] (2) to[out=160, in=160] (3);
        \draw[-] (1) to[out=180, in=180] (3);

        \draw[-]  (4) -- (5) ;
        \draw[-] (5) to[out=20, in=20] (6);
        \draw[-] (4) to[out=0, in=0] (6);

        \node[draw, ellipse, minimum width=4.4cm, inner sep=0pt,  fit=(1) (2) (3), label=above: $C_1$] {};
        \node[draw, ellipse, minimum width=4.4cm, inner sep=0pt, fit=(4) (5) (6), label=above: $C_2$] {};
        
        \draw[circle,fill=red!25,minimum size=17pt,inner sep=0pt] (1) circle (9pt) node[label=center: \scriptsize{$1$}] {};
        \draw[circle,fill=green!25,minimum size=17pt,inner sep=0pt] (2) circle (9pt) node[label=center: \scriptsize{$2$}] {};
        \draw[circle,fill=yellow!100!orange!55,minimum size=17pt,inner sep=0pt] (3) circle (9pt) node[label=center: \scriptsize{$n$}] {};
        \draw[circle,fill=yellow!100!orange!55,minimum size=17pt,inner sep=0pt] (4) circle (9pt) node[label=center: \scriptsize{$n+1$}] {};
        \draw[circle,fill=red!25,minimum size=17pt,inner sep=0pt] (5) circle (9pt) node[label=center: \scriptsize{$n+2$}] {};       
        \draw[circle,fill=green!25,minimum size=17pt,inner sep=0pt] (6) circle (9pt) node[label=center: \scriptsize{$2n$}] {};
    
      \end{tikzpicture}

\end{figure}

The inclusion-wise maximal cliques of \( G^1_n \) are \( C_1 \), \( C_2 \), with total edge-weight \( \frac{n(n-1)}{2} \), and the \( n \) cliques of cardinality 2 given by the edges connecting \( C_1 \) and \( C_2 \), with total edge-weight equal to \( \bar{c}\). Hence, the edge-weighted clique number of the EWMCP instance \( (G^1_n, \boldsymbol{c}^1_n) \) is
\begin{equation}    
 \optunon \, = \, \frac{n(n-1)}{2}.
 \label{omegaN}
\end{equation}

Since \( G^1_n \) contains a clique of cardinality \( n \), any coloring of its vertices requires at least \( n \) colors. It follows that for every coloring \( \mathscr{C} \) of the vertices of \( G^1_n \), we have \( |\mathscr{C}| \ge n \). Moreover, any independent set \( I \) in any coloring \( \mathscr{C} \) contains at most one vertex from \( C_1 \) and at most one from \( C_2 \), and therefore \( |I| \le 2 \).

\subsection{The first upper bound versus $\optimum$}\label{sec:ub1_vs_optimum}

The following proposition states that the ratio between the upper bound $\OSBound$ and edge-weighted clique number $\optimum$ can be arbitrarily large.

\begin{proposition}\label{prop:example_bad_itaspa}
The sequence $\{(G^1_n,\boldsymbol c^1_n)\}$ of EWMCP instances is such that
\begin{equation*}
\frac{\OSBunon}{\optunon} \rightarrow + \infty \quad   ~~\text{as}~~ \quad n \rightarrow +\infty,
\end{equation*}
where $\mathscr{C}^1_n$ is any coloring of the vertices of $G^1_n$.
\end{proposition}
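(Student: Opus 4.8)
The plan is to bound $\OSBunon$ from below by a quantity of order $n^{3}$ and to compare it against $\optunon=\tfrac{n(n-1)}{2}$, which is only of order $n^{2}$; the ratio then grows linearly in $n$. The only structural input needed is the observation made just above the statement: for \emph{every} coloring $\mathscr{C}^1_n$ of $G^1_n$, each color class $I$ contains at most one vertex of $C_1$ and at most one vertex of $C_2$, so $|I|\le 2$. This is what will make the lower bound uniform over all colorings.

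First I would work directly with the dual model~\eqref{form:LP-italia-spagna}, whose optimal value is $\OSBunon$. Let $(\boldsymbol\varrho,\boldsymbol\pi)$ be any feasible solution and write $\gamma_u=\sum_{e\in\delta(u)}\varrho_{eu}$ as in~\eqref{eq:vertex-weight-gamma}. Constraints~\eqref{con:LP-italia-spagna_max-w} give $\pi_I\ge\max_{u\in I}\gamma_u\ge\tfrac1{|I|}\sum_{u\in I}\gamma_u\ge\tfrac12\sum_{u\in I}\gamma_u$, where the last step uses $|I|\le 2$. Summing over $I\in\mathscr{C}^1_n$, and using that $\mathscr{C}^1_n$ partitions $V^1_n$, the objective~\eqref{obj:LP-italia-spagna_splitting} satisfies
\[
\sum_{I\in\mathscr{C}^1_n}\pi_I\ \ge\ \frac12\sum_{u\in V^1_n}\gamma_u\ =\ \frac12\sum_{u\in V^1_n}\sum_{e\in\delta(u)}\varrho_{eu}\ =\ \frac12\sum_{e\in E^1_n}\bigl(\varrho_{eu}+\varrho_{ev}\bigr)\ =\ \frac12\sum_{e\in E^1_n} c^1_e ,
\]
the last two equalities being a rearrangement of the double sum edge by edge together with~\eqref{con:LP-italia-spagna_splitting}. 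Since this holds for every feasible dual solution, $\OSBunon\ge\tfrac12\sum_{e\in E^1_n} c^1_e$. (The same bound could alternatively be read off the primal LP relaxation, using the feasible fractional solution $x_u=\tfrac12$ for all $u$ and $y_e=\tfrac12$ for all $e$, which respects $\sum_{u\in I}x_u\le1$ precisely because $|I|\le2$.)

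Next I would evaluate the total edge-weight: the cliques $C_1$ and $C_2$ contribute $2\binom n2=n(n-1)$ unit-weight edges, and the $n$ edges joining $C_1$ to $C_2$ contribute $n\bar c=n\bigl(\tfrac{n(n-1)}{2}-1\bigr)$, so $\OSBunon\ge\tfrac12\bigl(n(n-1)+n(\tfrac{n(n-1)}{2}-1)\bigr)$. Dividing by $\optunon=\tfrac{n(n-1)}{2}$ from~\eqref{omegaN} gives
\[
\frac{\OSBunon}{\optunon}\ \ge\ 1+\frac{\tfrac{n(n-1)}{2}-1}{n-1}\ =\ 1+\frac n2-\frac1{n-1}\ \xrightarrow[\,n\to\infty\,]{}\ +\infty ,
\]
which is exactly the claim.

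The computation is routine; the one point that requires care is that the lower bound on $\OSBunon$ must hold \emph{simultaneously for all} colorings $\mathscr{C}^1_n$, and this is delivered entirely by the bound $|I|\le2$ — without it the averaging step $\pi_I\ge\tfrac1{|I|}\sum_{u\in I}\gamma_u$ would be too lossy. Conceptually, the argument uses nothing about $G^1_n$ beyond the facts that its color classes have size at most $2$, so the bound $\OSBunon\ge\tfrac12\sum_e c^1_e$ holds, and that its edge-weighted clique number is only $\Theta(n^{2})$ while its total edge-weight is $\Theta(n^{3})$: the latter gap is created precisely by having $n$ heavy edges each of weight $\bar c=\Theta(n^{2})$.
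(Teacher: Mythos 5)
Your proposal is correct and follows essentially the same route as the paper's proof: it uses $|I|\le 2$ to get $\pi_I\ge\tfrac12\sum_{u\in I}\gamma_u$, sums over the coloring to obtain $\OSBunon\ge\tfrac12\sum_{e\in E^1_n}c_e$, and then evaluates the total edge-weight to conclude the ratio grows like $n/2$. The primal-side remark (taking $x_u=y_e=\tfrac12$) is a nice extra observation but does not change the argument.
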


\begin{proof}\label{example:poor_ub_itaspa}

Given any independent set $I\in\mathscr C^1_n$, by adding up the Constraints \eqref{con:LP-italia-spagna_max-w} related to $I$, we obtain:
\begin{gather*}\label{proof_step:sum_indep_1}
|I| \, \pi_I \, \ge \, \sum_{u \in I}\sum_{e \in \delta(u)} \ \ \varrho_{eu}\quad
\Longrightarrow \quad 
2 \, \pi_I \, \ge \, \sum_{u \in I}\sum_{e \in \delta(u)} \ \ \varrho_{eu},
\end{gather*}
since $|I|\le 2$.
Moreover, adding up all these last inequalities for every independent set $I\in\mathscr C^1_n$, we then have:
\begin{equation*}
    2 \, \sum_{I \in \mathscr C^1_n} \pi_I \geq \sum_{e \in E} \ (\varrho_{eu} + \varrho_{ev}),
\end{equation*}
since any coloring is a partition of the vertex set.
Then, according to Constraints \eqref{con:LP-italia-spagna_splitting}, the following holds:
\begin{align*}
  \underbrace{\sum_{I \in \mathscr C^1_n} \pi_I}_{=\OSBunon} \,\geq \frac 12\sum_{e \in E^1_n} c_{e} = \frac12\sum_{e \in E^1_n[C_1]} c_{e} +\frac 12 \sum_{e \in E^1_n[C_2]} c_{e} + \frac 12 n\, \bar{c}
=  \underbrace{\frac{n(n-1)}{2}}_{=\optunon} + \frac 12 n \, \bar{c}. 
\end{align*}
Finally, it follows that:
\begin{equation*}
    \lim_{n \to +\infty} \frac{\OSBunon}{\optunon} \,  \geq \, \lim_{n \to +\infty} 1 + \frac{n}{2} - \frac{1}{n - 1}= +\infty.
\end{equation*}
\end{proof}

\subsection{The second upper bound versus $\optimum$}\label{sec:ub2_vs_optimum}

The following proposition states that the ratio between the upper bound $\JAPbound$ and edge-weighted clique number $\optimum$ can be arbitrarily large.

\begin{proposition}\label{prop:example_bad_japan}
The sequence $\{(G^1_n,\boldsymbol c^1_n)\}$ of EWMCP instances is such that
\begin{equation*}
\frac{\JAPunon}{\optunon} \rightarrow + \infty \quad   ~~\text{as}~~ \quad n \rightarrow +\infty,
\end{equation*}
where $\mathscr{C}^1_n$ is any coloring of the vertices of $G^1_n$ and $\prec^1_n$ defines any total order of the independent sets in $\mathscr{C}^1_n$.
\end{proposition}

\begin{proof}

For any edge $\{w,n+w\}\in E^1_n$ with $w\in\{1,2,\dots,n\}$, connecting a vertex in $C_1$ with a vertex in $C_2$, its endpoints belong to distinct independent sets of $\mathscr C^1_n$.
Let $V'\subseteq V^1_n$ be the subset of vertices $w\in C_1$ such that $I(w) \prec^1_n I(n+w)$ and vertices $n+w\in C_2$ such that $I(n+ w)\prec^1_n I(w)$. Let $V'' := V^1_n\setminus V'$. This implies that each vertex $v$ in $V'$ is connected to a vertex $u$ in $V''$ with an edge of weight $\bar c$ and we have $I(v) \prec^1_n I(u)$.
Then, we have:
$$
\sum_{v\in V^1_n}\sigma_v=\sum_{v\in V'}\sigma_v+\sum_{u\in V''}\sigma_u=
\sum_{v\in V'}\sigma_v+\sum_{u\in V''}\sum_{J \in \mathscr{C}^1_n : J \prec^1_n I(u)}\max\{c_e\colon e\in\delta(u),v\in J\}.
$$
Since, for every $u\in V''$ there exists an edge of weight $\bar c$ belonging to the set $\{e\in\delta(u)\colon v\in J, J \prec^1_n I(u)\}$, and since $|V''|=n$, we have:
$$
\sum_{v\in V^1_n}\sigma_v \; \ge\; n\,\bar c.
$$
For every $I \in \mathscr{C}^1_n$ we have:
\[
\max\{\sigma_u\colon u\in I\}
\;\ge\;
\frac12\sum_{u\in I}\sigma_u,
\]
since the right-hand side is at most as large as the average of the values in $\{\sigma_u\colon u\in I\}$. The latter is true because $|I|\le 2$ for every $I \in \mathscr{C}^1_n$.
Adding up these inequalities for all \(I \in \mathscr{C}^1_n\) yields
\[
\;\underbrace{\sum_{I\in\mathscr C^1_n}\max\{\sigma_u\colon u\in I\}}_{=\JAPunon}
\;\ge\;
\frac12\sum_{v\in V^1_n}\sigma_v
\;\ge\;
\frac{n\,\bar c}{2}=\dfrac{n}{2}\left(\underbrace{\frac{n(n-1)}{2}}_{=\optunon}-1\right).
\]

Finally, it follows that
$$
\lim_{n\to +\infty}\frac{\JAPunon}{\optunon}\ge\lim_{n\to +\infty} \frac{n}{2}-\frac{1}{n-1}=+\infty.
$$

\end{proof}

\section{Theoretical comparison between the first two upper bounds}\label{sec:THEOR_COMP}

In this section, we provide two sequences of EWMCP instances in which the ratio between the two upper bounds $\OSBound$ and $\JAPbound$, or its reciprocal, becomes arbitrarily large as the number of vertices in the graphs tends to infinity.

\subsection{The second upper bound versus the first}\label{sec:ub2_vs_ub1}

\bigskip
For every integer \( n \geq 2 \), let \( G^2_n = (V^2_n, E^2_n) \)  be a graph with vertex set containing a set \( C_0  \) of $n$ vertices, and, for each vertex $u\in C_0$, a set $C_u$ of additional $n-1$ vertices, with $C_u\cap C_0=\emptyset$. Moreover, the sets $C_u$ for every $u\in C_0$ are pairwise disjoint. 
The edge set contains only the edges that make $C_0$ and $C_u\cup\{u\}$, for every $u\in C_0$, cliques.
Hence, the graph is composed of \( n \) \emph{peripheral cliques} $C_u\cup\{u\}$ of size \( n \), each sharing exactly one vertex $u$ with the \emph{central clique} $C_0$. For every edge $e\in E$, its edge-weight is defined as follows:
\[
c_{e} = \begin{cases}
1 & \text{if }  u \in C_0\text{ and } \, v \in C_u,\\
0 & \text{otherwise}.
\end{cases}
\]
That is, the edges connecting the central clique to the peripheral cliques have weight $1$, and  all the remaining edges (within each clique) have weight $0$.
These edge-weights define the edge-weight vector $\boldsymbol c^2_n$ for graph $G^2_n$. 
An illustration of a graph of this family, with \( n = 3\), is shown in Figure \ref{fig:itaspa_wins}.

\begin{figure}
\centering
\caption{
Representation of the graph \( G^2_3 \), where the vertices are colored using the minimum number of colors. Each vertex is labeled with a natural number displayed inside the node. 
Edges of weight~$1$ are shown in black with their weight displayed next to them; edges of weight~$0$ are shown in gray.}
\label{fig:itaspa_wins}
  \begin{tikzpicture}[scale=1.29]
        \footnotesize

        \pgfmathsetmacro{\a}{1.6}
        \pgfmathsetmacro{\result}{sqrt((\a)^2 - (0.5*\a)^2)}

        \draw (0, 0 cm) coordinate (0);
        \draw (-0.5*\a,0.5*\result cm) coordinate (1);
        \draw (0.5*\a,0.5*\result cm) coordinate (2);
        \draw (0,-0.5*\result cm) coordinate (3);

        \draw  (-0.5*\a,-1.5*\result cm) coordinate (8);
        \draw  (0.5*\a,-1.5*\result cm) coordinate (9);

        \draw (-1.5*\a,0.5*\result cm) coordinate (4);
        \draw (-1*\a,1.5*\result cm) coordinate (5);

        \draw (1.5*\a,0.5*\result cm) coordinate (6);
        \draw (1*\a,1.5*\result cm) coordinate (7);

        \draw[-, gray!80!white]  (1) -- (2) node[midway, xshift =0, yshift = 1.5ex] {};
        \draw[-, gray!80!white]  (1) -- (3) node[midway, xshift = -2ex, yshift = -1ex] {};
        \draw[-, gray!80!white]  (2) -- (3) node[midway, xshift = 2ex, yshift = -1ex] {};
        \draw[-, very thick]  (1) -- (4) node[midway, yshift = -2ex] {$1$};
        \draw[-, very thick]  (1) -- (5) node[midway, xshift = 2ex, yshift = 1ex] {$1$};
        \draw[-, gray!80!white]  (4) -- (5) node[midway, xshift = -1ex, yshift = 2ex] {};
        \draw[-, very thick]  (2) -- (6) node[midway, yshift = -2ex] {$1$};
        \draw[-, very thick]  (2) -- (7) node[midway, xshift = -2ex, yshift = 1ex] {$1$};
        \draw[-, gray!80!white]  (6) -- (7) node[midway, xshift = 2ex, yshift = 1ex] {};
        \draw[-, very thick]  (3) -- (8) node[midway, xshift = -2ex, yshift = 0.8ex] {$1$};
        \draw[-, very thick]  (3) -- (9) node[midway, xshift = 2ex, yshift = 0.8ex] {$1$};
        \draw[-, gray!80!white]  (8) -- (9) node[midway, xshift = 0, yshift = -1.5ex] {};

        \draw[circle,fill=red!25, minimum size=17pt,inner sep=0pt] (1) circle (9pt) node[label=center: $1$] {};
        \draw[circle,fill=green!25,minimum size=17pt,inner sep=0pt] (2) circle (9pt) node[label=center: $2$] {};
        \draw[circle,fill=yellow!100!orange!55,minimum size=17pt,inner sep=0pt] (3) circle (9pt) node[label=center: $3$] {};
        \draw[circle,fill=green!25,minimum size=17pt,inner sep=0pt] (4) circle (9pt) node[label=center: $4$] {};
        \draw[circle,fill=yellow!100!orange!55,minimum size=17pt,inner sep=0pt] (5) circle (9pt) node[label=center: $5$] {};          
        \draw[circle,fill=yellow!100!orange!55,minimum size=17pt,inner sep=0pt] (6) circle (9pt) node[label=center: $6$] {};
        \draw[circle,fill=red!25,minimum size=17pt,inner sep=0pt] (7) circle (9pt) node[label=center: $7$] {};
        \draw[circle,fill=red!25,minimum size=17pt,inner sep=0pt] (8) circle (9pt) node[label=center: $8$] {};
        \draw[circle,fill=green!25,minimum size=17pt,inner sep=0pt] (9) circle (9pt) node[label=center: $9$] {};

      \end{tikzpicture}
\end{figure}

A coloring $\mathscr C^2_n$ of the vertices of $G^2_n$ with the minimum number $n$ of colors can be constructed as follows. For all $u \in C_0$, an independent set $I_u$ is created containing the vertex $u$ and one vertex for every set $C_v$ with $v \ne u$, which has not been included in an independent set yet. By construction, every edge of weight $1$ joins vertices in different independent sets in $\mathscr C^2_n$. Additionally, each $I_u$ is an independent set of cardinality $n$ and $\mathscr C^2_n := \{I_u\colon u\in C_0\}$ is indeed a coloring of $G^2_n$.

The following proposition states that the ratio between the upper bound $\JAPbound$ and $\OSBound$  can be arbitrarily large.

\begin{proposition} \label{prop:example_itaspa_wins}
The sequence $\{(G^2_n,\boldsymbol c^2_n)\}$ of EWMCP instances is such that 
\begin{equation*}
\frac{\JAPduen}{\OSBduen} \rightarrow + \infty \quad   ~~\text{as}~~ \quad n \rightarrow +\infty,
\end{equation*}
where $\mathscr C^2_n$ is the coloring of $G^2_n$ defined above and $\prec^2_n$ defines any total order of the independent sets in $\mathscr{C}^2_n$.
\end{proposition}

\begin{proof}

We now define a feasible solution to Model \eqref{form:LP-italia-spagna} for the instance $(G^2_n,\boldsymbol c^2_n)$, leading to an upper bound on $\OSBduen$. We assign the weight of every edge of weight $1$ entirely on its endpoint in the peripheral clique, namely

\[
  \tilde\varrho_{eu}=0,\qquad
  \tilde\varrho_{ev}=1,
  \qquad u\in C_0,\; v\in C_u.
\]

We have that each vertex in the peripheral cliques receives a total weight equal to $1$, while each vertex in the central clique receives a total weight equal to $0$. Hence, the feasible solution is completed by setting
$$\tilde \pi_{I}=1, \quad I \in \mathscr C^2_n.$$

Therefore

\[
  \OSBduen \; \le \; \sum_{I \in \mathscr C^2_n} \tilde \pi_{I} \; = \; n.
\]

Let us now compute the value of $\JAPduen$.
We know that, for each vertex in the central clique $u \in \{1,2,\dots,n\}$, belonging to the independent set $I_u \in \mathscr C^2_n$, there is exactly one neighbor of weight 1 in each of the independent sets $I_v$ with $I_v \prec^2_n I_u$. Hence, for every $u \in \{1,2,\dots, n\}$, we have:

$$
\sigma_u = |\{I_v\in\mathscr C^2_n\colon I_v\prec^2_n I_u\}|.
$$

On the other hand, each vertex in the peripheral cliques $v \notin C_0$ has at most one incident edge of weight $1$, hence $\sigma_{v}\le 1$.

Therefore, for all independent sets $I_u$ with $u \in \{1,2,\dots,n\}$, the maximum $\max\{\sigma_v\colon v\in I_u\}$ is attained by the central vertex $u$. Then, for every $u \in \{1,2,\dots,n\}$, we have:
\[
  \max\{\sigma_v\colon v\in I_u\} = |\{I_v\in\mathscr C^2_n\colon I_v\prec^2_n I_u\}|,
\]
and thus, since $\prec^2_n$ is a total order
\[
  \JAPduen
   \;=\;
    \sum_{u=1}^{n}|\{I_v\in\mathscr C^2_n\colon I_v\prec^2_n I_u\}|
    \;=\;
    \sum_{h=0}^{n-1}h
    \;=\;
    \frac{n(n-1)}{2}.
\]

Finally, it follows that
$$
\lim_{n\to +\infty}\frac{\JAPduen}{\OSBduen}\ge \lim_{n\to+\infty}\frac{\frac{n(n-1)}2}{n}=\lim_{n\to+\infty}\frac{n-1}2=+\infty.
$$
\end{proof}

\subsection{The first upper bound versus the second}\label{sec:ub1_vs_ub2}

\bigskip
For every integer $n\ge 1$, let $G^3_n=(V^3_n,E^3_n)$ be a graph where the vertex set \( V^3_n \) has cardinality \( 2n \) and  it is partitioned into two independent sets \( I = \{1,2,\dots, n\} \) and \( J = \{n+1,n+2, \dots, 2n\} \), each of cardinality \( n \). By construction, no edges exist between vertices within the same independent set, while each vertex in \( I \) is connected to every vertex in \( J \), i.e., for each $u \in I, \, v \in J$, there exists an edge $e \in E$ connecting $u$ and $v$. To each edge $e\in E^3_n$ we assign the edge-weight \(c_{e} = 1 \). These edge-weights define the edge-weight vector $\boldsymbol c^3_n$ for the graph $G^3_n$. An illustration of the graph $G^3_n$ is provided in Figure \ref{fig:jap_wins}.

\begin{figure}
\centering
\caption{
Representation of the graph \( G^3_n \), where the vertices are colored using the minimum number of colors. Each vertex is labeled with a natural number displayed inside the node. All edge-weights are equal to~\(1\); these values are omitted from the figure.
\label{fig:jap_wins}}
    \begin{tikzpicture}[scale = 1.29]
                \footnotesize

                \pgfmathsetmacro{\n}{4}
                \pgfmathsetmacro{\a}{1.6}
                \pgfmathsetmacro{\result}{sqrt((\a)^2 - (0.5*\a)^2)}

                \foreach \i in {1,...,\n} {
                        \pgfmathsetmacro{\posY}{((\n-1)/2 - (\i - 1))*\result}
                        \draw (-\result cm, {\posY}) coordinate (\i);
                }

                \foreach \i in {1,...,\n} {
                        \pgfmathsetmacro{\posY}{((\n-1)/2 - (\i - 1))*\result}
                        \draw (\result cm, {\posY}) coordinate (\n+\i);
                }
        
        \foreach \i in {1,...,\n} {%
                \ifnum\i=\numexpr\n-1\relax
                \else
                     \foreach \j in {1,...,\n} {%
                             \ifnum\j=\numexpr\n-1\relax
                             \else
                                     \draw[-] (\i) -- ({\n+\j}) 
                                             node[midway, yshift=2ex] {};
                             \fi
                     }%
                \fi
        }

        \foreach \i in {1,...,\n} {
            \ifnum\i=\numexpr\n-1\relax
                 \node at (\i) {$\spacedvdots$};
            \else
                 \ifnum\i=\numexpr\n\relax
                         \draw[circle,fill=red!25, minimum size=17pt, inner sep=0pt] (\i) circle (9pt)
                                node[label=center: {\scriptsize $n$}] {};
                 \else
                         \draw[circle,fill=red!25, minimum size=17pt, inner sep=0pt] (\i) circle (9pt)
                                node[label=center: {\scriptsize $\i$}] {};
                 \fi
            \fi
        }
        
        \foreach \i in {1,...,\n} {
            \ifnum\i=\numexpr\n-1\relax
                 \node at ({\n+\i}) {$\spacedvdots$};
            \else
                 \ifnum\i=\numexpr\n\relax
                         \draw[circle,fill=green!25, minimum size=17pt, inner sep=0pt] ({\n+\i}) circle (9pt)
                                node[label=center: {\scriptsize $2\,n$}] {};
                 \else
                         \draw[circle,fill=green!25, minimum size=17pt, inner sep=0pt] ({\n+\i}) circle (9pt)
                                node[label=center: {\scriptsize $n+\i$}] {};
                 \fi
            \fi
        }
        
            \end{tikzpicture}
\end{figure}

A coloring of the vertices of $G^3_n$ with the minimum number of colors is naturally given as $\mathscr{C}^3_n = \{I_1, I_2\}$.

The following proposition states that the ratio between the upper bound $\OSBound$  and $\JAPbound$  can be arbitrarily large.

\begin{proposition} \label{prop:example_japan_wins}
The sequence $\{(G^3_n,\boldsymbol c^3_n)\}$ of EWMCP instances is such that 
\begin{equation*}
\frac{\OSBtren}{\JAPtren} \rightarrow + \infty \quad   ~~\text{as}~~ \quad n \rightarrow +\infty,
\end{equation*}
where $\mathscr C^3_n$ is the coloring of $G^3_n$ defined above and $\prec^3_n$ defines any total order of the independent sets in $\mathscr{C}^3_n$.
\end{proposition}

\begin{proof}
We first show that the optimal value of Model~\eqref{form:LP-italia-spagna}, namely $\OSBtren$, is $n$. 
We notice that, in any feasible solution of Model \eqref{form:LP-italia-spagna}, for every $u\in I$ and $v\in J$, Constraints \eqref{con:LP-italia-spagna_max-w} relative to the edge $e=\{u,v\}$ write as 
$$\pi_{I}\ge \sum_{e\in\delta(u)}\varrho_{eu},\qquad \pi_{J}\ge \sum_{e\in\delta(v)}\varrho_{ev}.$$

Adding up these constraints over all vertices, therefore, yields
\[
   n\bigl(\pi_{I}+\pi_{J}\bigr)
       \;\ge\;
       \sum_{e \in E} \ c_{e},
\]
where we used Constraint \eqref{con:LP-italia-spagna_splitting}.

Moreover, all edge-weights are equal to 1, so the sum on the right-hand side equals $|E^3_n|=n^2$.
Consequently, $\pi_{I}+\pi_{J}\ge n$, and every feasible solution of Model~\eqref{form:LP-italia-spagna} has value at least $n$.

This lower bound of $n$ on $\OSBtren$ is tight: if we assign each edge-weight wholly to its endpoint in $J$ and make the variable assignment $\pi_{I}=0,\ \pi_{J}=n$, all constraints of Model~\eqref{form:LP-italia-spagna} are satisfied and the objective function equals $n$.

Hence
\[
  \OSBtren = n.
\]

\medskip
As far as the bound $\JAPtren$ is concerned, we assume, without loss of generality that $I \prec^3_n J$: observe that every vertex $u\in I$ is such that $\sigma_u=0$, since there are no independent sets preceding $I$ under the total order $\prec^3_n$. Whereas every vertex $v\in J$ is such that $\sigma_v=1$, since it is connected only to vertices in $I$ with edges of weight $1$. Hence, we have:
$$\max\{\sigma_u\colon u\in I\}=0, \qquad \max\{\sigma_v\colon v\in J\}=1,
$$
giving the upper bound 
$$\JAPtren=1.$$
Finally, it follows that
$$
\lim_{n\to +\infty}\frac{\OSBtren}{\JAPtren}= \lim_{n\to+\infty}\frac{n}{1}=+\infty.
$$

\end{proof}

Note that the values of the two upper bounds in the examples studied in this section, and thus the conclusion of the two propositions, depend on the choice of a particular coloring of the vertices of the graphs $G^2_n$ and $G^3_n$. Nevertheless, our choices feature a minimum number of colors and the results hold with any coloring with this natural property.

\section{A third upper bound on $\optimum$}\label{sec:UB3}
In this section, we introduce and theoretically study a third upper bound on the edge-weighted clique number $\optimum$, originally proposed in \citet{hosseinian2020lagrangian} and used within the branch-and-bound algorithm for the EWMCP detailed in the same article.

This bound requires an upper bound $k$ on the cardinality of any clique in $G$, namely on the clique number $\omega(G)$ of the graph. Additionally, for all $u \in V$, we denote by $ \delta_t(u) \subseteq \delta(u)$ any set of $t \in \mathbb{Z}_+$ heaviest edges incident to $u$. If $|\delta(u)| \le t$, then $ \delta_t(u) = \delta(u)$.

For all $u\in V$, we define the quantity
\begin{equation}\label{eq:eta_ub3}
    {\eta}_u := \frac{1}{2} \sum_{e \in \delta_{k-1}(u)} c_e,
\end{equation}
which can be interpreted as an upper bound on the contribution of edges incident to vertex~$u$ 
to the total edge-weight of any clique of cardinality at most~$k$ 
containing~$u$, since such a clique can contain at most $k-1$ edges 
incident to~$u$. The factor $\frac{1}{2}$ accounts for the fact that 
each edge of a clique is incident to two of its vertices. This value, similarly to $\gamma_u$, defined in \eqref{eq:vertex-weight-gamma}, and $\sigma_u$, defined in \eqref{eq:jap_weight}, can be also interpreted as the total weight assigned to vertex $u$.

Finally, let $\varphi(j)$, with $j\in\{1,2,\ldots,|V|\}$, be a function that returns the vertex $u$ with the $j$-th largest $\eta_u$, with ties broken arbitrarily. 
The function $\varphi$ induces the following ordering on the vertex quantities $\eqref{eq:eta_ub3}$:
\[
    {\eta}_{\varphi(1)}\geq {\eta}_{\varphi(2)} \geq \cdots \geq {\eta}_{\varphi(|V|)}.
\]
The third upper bound studied in this article, denoted by~$\HFBbound$, is then defined as:
\begin{equation} \label{eq:ub3}
    \HFBbound := \sum_{j=1}^{k} {\eta}_{\varphi(j)},
\end{equation}

that is the sum of the weights $\eta_u$ of the $k$ heaviest vertices of $V$.

For completeness, we verify that $\HFBbound$ is a valid upper bound on~$\optimum$. Consider any clique $C$ in~$G$; since $k$ 
is an upper bound on the clique number of~$G$, we have 
$|C| \leq k$. Then:
\begin{align}
\sum_{e \in E[C]} c_e 
&= \sum_{u \in C} \frac{1}{2} \sum_{v \in C \setminus \{u\}} c_{\{u,v\}} 
\label{eq:hfb-0st-row}\\[2ex]
&\leq \sum_{u \in C} \frac{1}{2} \sum_{e \in  \delta_{k-1}(u)} 
c_e
\label{eq:hfb-1st-row}\\[2ex]
&= \sum_{u \in C} {\eta}_u
\label{eq:hfb-2nd-row}\\[2ex]
&\leq \sum_{j=1}^{k} {\eta}_{\varphi(j)} 
= \HFBbound. \label{eq:hfb-3rd-row}
\end{align}
The equality in~\eqref{eq:hfb-0st-row} rewrites the total edge-weight of the clique~$C$ by distributing each edge-weight equally between its two endpoints.
The inequality in~\eqref{eq:hfb-1st-row} follows from the fact that $|C \setminus \{u\}| \leq k-1$. Therefore, the total weight of the edges connecting~$u$ to the other vertices of~$C$ is bounded above by the sum of the weights of the $k-1$ heaviest edges incident to~$u$.
The equality in~\eqref{eq:hfb-2nd-row} holds by the 
definition~\eqref{eq:eta_ub3} of ${\eta}_u$.
Finally, the inequality in~\eqref{eq:hfb-3rd-row} follows from $|C| \leq k$, which implies that the sum of~$\eta_u$ over the vertices of~$C$ is bounded above by the sum of the $k$ largest values of~$\eta_u$ over all vertices in~$V$.

In Figure~\ref{fig:example_hfb}, we report the values of the upper bound $\HFBbound$ for the two instances introduced in Figure~\ref{fig:exampleSOL}, with $k = 3$.

\begin{figure}
\caption{%
The values of the upper bound $\HFBbound$ for the instances in Figure~\ref{fig:exampleSOL}, with $k = 3$. For each edge $e \in E$, the black label next to the edge shows its weight~$c_e$ --- in Part~\ref{subfig:graph1_hfb}, this corresponds to the edge-weight vector $\boldsymbol c_a$ of Part~\ref{subfig:graph1} of Figure~\ref{fig:exampleSOL}; in Part~\ref{subfig:graph2_hfb}, to the edge-weight vector $\boldsymbol c_b$ of Part~\ref{subfig:graph2} of Figure~\ref{fig:exampleSOL}. 
For each edge $e \in E$, the value $\frac{c_e}{2}$ is reported in blue on each side of the edge.
A blue label is shown in bold if the corresponding edge is among the $k - 1 = 2$ heaviest edges incident to that vertex, meaning it contributes to the vertex weight~$ \eta_u$. For each vertex $u \in V$, the value $ \eta_u$, computed as the sum of the bold blue labels incident to~$u$, is shown in purple next to the vertex. The $k = 3$ vertices with the largest~$\eta_u$ are highlighted in bold with a striped pattern; their weights are summed to obtain~$\HFBbound$.
\label{fig:example_hfb}
}
    \centering

\begin{subfigure}[b]{0.495\textwidth}
\centering
  \begin{tikzpicture}[scale=1.29]
        \footnotesize

        \pgfmathsetmacro{\a}{1.6}
        \pgfmathsetmacro{\result}{sqrt((\a)^2 - (0.5*\a)^2)}

        \draw (-3/2*\a,\result cm) coordinate (1);
        \draw (-0.5*\a,\result cm) coordinate (2);
        \draw (-\a,0cm) coordinate (3);
        \draw (\a,0cm) coordinate (6);
        \draw (0.5*\a,\result cm) coordinate (4);
        \draw (3/2*\a,\result cm) coordinate (5);
        \draw (-0.5*\a,-\result cm) coordinate (7);
        \draw (0.5*\a,-\result cm) coordinate (8);
        \draw (0,-2*\result cm) coordinate (9);

        \draw[-]  (1) -- (2) node[midway, yshift = 3ex] {$5$};
        \draw[-]  (1) -- (3) node[midway, xshift = -2ex, yshift = -1ex] {$3$};
        \draw[-]  (2) -- (3) node[midway, xshift = 2ex, yshift = -1ex] {$2$};
        \draw[-]  (2) -- (4) node[midway, yshift = 3ex] {$2$};
        \draw[-]  (3) -- (7) node[midway, xshift = -2ex, yshift = -1ex] {$5$};
        \draw[-]  (4) -- (5) node[midway, yshift = 3ex] {$3$};
        \draw[-]  (4) -- (6) node[midway, xshift = -2ex, yshift = -1ex] {$3$};
        \draw[-]  (5) -- (6) node[midway, xshift = 2ex, yshift = -1ex] {$7$};
        \draw[-]  (6) -- (8) node[midway, xshift = 2ex, yshift = -1ex] {$9$};
        \draw[-]  (7) -- (8) node[midway, yshift = 3ex] {$4$};
        \draw[-]  (7) -- (9) node[midway, xshift = -2ex, yshift = -1ex] {$3$};
        \draw[-]  (8) -- (9) node[midway, xshift = 2ex, yshift = -1ex] {$2$};

        \draw[-] (1) -- (2) node[midway,xshift=-2ex, fill=white, inner sep=1pt, text=mblue] {\scriptsize $\boldsymbol{\frac{5}{2}}$}
        node[midway,xshift=2ex, fill=white, inner sep=1pt, text=mblue] {\scriptsize $\boldsymbol{\frac{5}{2}}$};
        \draw[-] (1) -- (3) node[midway,xshift=-1ex,yshift=1.5ex, fill=white, inner sep=1pt, text=mblue] {\scriptsize $\boldsymbol{\frac{3}{2}}$}
        node[midway,xshift=1ex,yshift=-1.5ex, fill=white, inner sep=1pt, text=mblue] {\scriptsize $\boldsymbol{\frac{3}{2}}$};
        \draw[-] (2) -- (3) node[midway,xshift=1ex,yshift=1.5ex, fill=white, inner sep=1pt, text=mblue] {\scriptsize $\boldsymbol{1}$}
        node[midway,xshift=-1ex,yshift=-1.5ex, fill=white, inner sep=1pt, text=mblue] {\scriptsize $1$};
        \draw[-] (2) -- (4) node[midway,xshift=-2ex, fill=white, inner sep=1pt, text=mblue] {\scriptsize $1$}
        node[midway,xshift=2ex, fill=white, inner sep=1pt, text=mblue] {\scriptsize $1$};
        \draw[-] (3) -- (7) node[midway,xshift=-1ex,yshift=1.5ex, fill=white, inner sep=1pt, text=mblue] {\scriptsize $\boldsymbol{\frac{5}{2}}$}
        node[midway,xshift=1ex,yshift=-1.5ex, fill=white, inner sep=1pt, text=mblue] {\scriptsize $\boldsymbol{\frac{5}{2}}$};
        \draw[-] (4) -- (5) node[midway,xshift=-2ex, fill=white, inner sep=1pt, text=mblue] {\scriptsize $\boldsymbol{\frac{3}{2}}$}
        node[midway,xshift=2ex, fill=white, inner sep=1pt, text=mblue] {\scriptsize $\boldsymbol{\frac{3}{2}}$};
        \draw[-] (4) -- (6) node[midway,xshift=-1ex,yshift=1.5ex, fill=white, inner sep=1pt, text=mblue] {\scriptsize $\boldsymbol{\frac{3}{2}}$}
        node[midway,xshift=1ex,yshift=-1.5ex, fill=white, inner sep=1pt, text=mblue] {\scriptsize $\frac{3}{2}$};
        \draw[-] (5) -- (6) node[midway,xshift=1ex,yshift=1.5ex, fill=white, inner sep=1pt, text=mblue] {\scriptsize $\boldsymbol{\frac{7}{2}}$}
        node[midway,xshift=-1ex,yshift=-1.5ex, fill=white, inner sep=1pt, text=mblue] {\scriptsize $\boldsymbol{\frac{7}{2}}$};
        \draw[-] (6) -- (8) node[midway,xshift=-1ex,yshift=-1.5ex, fill=white, inner sep=1pt, text=mblue] {\scriptsize $\boldsymbol{\frac{9}{2}}$}
        node[midway,xshift=1ex,yshift=1.5ex, fill=white, inner sep=1pt, text=mblue] {\scriptsize $\boldsymbol{\frac{9}{2}}$};
        \draw[-] (7) -- (8) node[midway,xshift=-2ex, fill=white, inner sep=1pt, text=mblue] {\scriptsize $\boldsymbol{2}$}
        node[midway,xshift=2ex, fill=white, inner sep=1pt, text=mblue] {\scriptsize $\boldsymbol{2}$};
        \draw[-] (7) -- (9) node[midway,xshift=-1ex,yshift=1.5ex, fill=white, inner sep=1pt, text=mblue] {\scriptsize $\frac{3}{2}$}
        node[midway,xshift=1ex,yshift=-1.5ex, fill=white, inner sep=1pt, text=mblue] {\scriptsize $\boldsymbol{\frac{3}{2}}$};
        \draw[-] (8) -- (9) node[midway,xshift=1ex,yshift=1.5ex, fill=white, inner sep=1pt, text=mblue] {\scriptsize $1$}
        node[midway,xshift=-1ex,yshift=-1.5ex, fill=white, inner sep=1pt, text=mblue] {\scriptsize $\boldsymbol{1}$};

        \draw[circle,fill=white,minimum size=17pt,inner sep=0pt] (1) circle (9pt) node[label=center: $1$] {};
        \draw[circle,fill=white,minimum size=17pt,inner sep=0pt] (2) circle (9pt) node[label=center: $2$] {};
        \draw[circle,fill=white,minimum size=17pt,inner sep=0pt] (3) circle (9pt) node[label=center: $3$] {};
        \draw[circle,fill=white,minimum size=17pt,inner sep=0pt] (4) circle (9pt) node[label=center: $4$] {};
        \draw[circle,preaction={fill=white},pattern=north east lines,pattern color=gray!70,minimum size=17pt,inner sep=0pt,line width=1.5pt] (5) circle (9pt) node[label=center: $\boldsymbol{5}$] {};        
        \draw[circle,preaction={fill=white},pattern=north east lines,pattern color=gray!70,minimum size=17pt,inner sep=0pt,line width=1.5pt] (6) circle (9pt) node[label=center: $\boldsymbol{6}$] {};
        \draw[circle,fill=white,minimum size=17pt,inner sep=0pt] (7) circle (9pt) node[label=center: $7$] {};
        \draw[circle,preaction={fill=white},pattern=north east lines,pattern color=gray!70,minimum size=17pt,inner sep=0pt,line width=1.5pt] (8) circle (9pt) node[label=center: $\boldsymbol{8}$] {};
        \draw[circle,fill=white,minimum size=17pt,inner sep=0pt] (9) circle (9pt) node[label=center: $9$] {};

        \foreach \pos/\label/\xshift/\yshift in {
        1/{4}/0em/1.5em,
        2/{\frac{7}{2}}/0em/1.5em,
        3/{4}/-1.5em/0em,
        4/{3}/0em/1.5em,
        5/{5}/0/1.5em,
        6/{8}/1.5em/0em,
        7/{\frac{9}{2}}/-1.5em/0em,
        8/{\frac{13}{2}}/1.5em/0em,
        9/{\frac{5}{2}}/-1.5em/0em
    }{
        \node[text=purple] at ([xshift=\xshift, yshift=\yshift]\pos) {\scriptsize $\label$};
    }
      \end{tikzpicture}
\subcaption{$\textnormal{UB}_3(G, \boldsymbol c_a,3)=\frac{39}{2}$}
\label{subfig:graph1_hfb} 
\end{subfigure}
\begin{subfigure}[b]{0.495\textwidth}
\centering
  \begin{tikzpicture}[scale=1.29]
        \footnotesize

        \pgfmathsetmacro{\a}{1.6}
        \pgfmathsetmacro{\result}{sqrt((\a)^2 - (0.5*\a)^2)}

        \draw (-3/2*\a,\result cm) coordinate (1);
        \draw (-0.5*\a,\result cm) coordinate (2);
        \draw (-\a,0cm) coordinate (3);
        \draw (\a,0cm) coordinate (6);
        \draw (0.5*\a,\result cm) coordinate (4);
        \draw (3/2*\a,\result cm) coordinate (5);
        \draw (-0.5*\a,-\result cm) coordinate (7);
        \draw (0.5*\a,-\result cm) coordinate (8);
        \draw (0,-2*\result cm) coordinate (9);

        \draw[-]  (1) -- (2) node[midway, yshift = 3ex] {$9$};
        \draw[-]  (1) -- (3) node[midway, xshift = -2ex, yshift = -1ex] {$9$};
        \draw[-]  (2) -- (3) node[midway, xshift = 2ex, yshift = -1ex] {$1$};
        \draw[-]  (2) -- (4) node[midway, yshift = 3ex] {$4$};
        \draw[-]  (3) -- (7) node[midway, xshift = -2ex, yshift = -1ex] {$1$};
        \draw[-]  (4) -- (5) node[midway, yshift = 3ex] {$1$};
        \draw[-]  (4) -- (6) node[midway, xshift = -2ex, yshift = -1ex] {$7$};
        \draw[-]  (5) -- (6) node[midway, xshift = 2ex, yshift = -1ex] {$9$};
        \draw[-]  (6) -- (8) node[midway, xshift = 2ex, yshift = -1ex] {$9$};
        \draw[-]  (7) -- (8) node[midway, yshift = 3ex] {$3$};
        \draw[-]  (7) -- (9) node[midway, xshift = -2ex, yshift = -1ex] {$6$};
        \draw[-]  (8) -- (9) node[midway, xshift = 2ex, yshift = -1ex] {$7$};

        \draw[-] (1) -- (2) node[midway,xshift=-2ex, fill=white, inner sep=1pt, text=mblue] {\scriptsize $\boldsymbol{\frac{9}{2}}$}
        node[midway,xshift=2ex, fill=white, inner sep=1pt, text=mblue] {\scriptsize $\boldsymbol{\frac{9}{2}}$};
        \draw[-] (1) -- (3) node[midway,xshift=-1ex,yshift=1.5ex, fill=white, inner sep=1pt, text=mblue] {\scriptsize $\boldsymbol{\frac{9}{2}}$}
        node[midway,xshift=1ex,yshift=-1.5ex, fill=white, inner sep=1pt, text=mblue] {\scriptsize $\boldsymbol{\frac{9}{2}}$};
        \draw[-] (2) -- (3) node[midway,xshift=1ex,yshift=1.5ex, fill=white, inner sep=1pt, text=mblue] {\scriptsize $\frac{1}{2}$}
        node[midway,xshift=-1ex,yshift=-1.5ex, fill=white, inner sep=1pt, text=mblue] {\scriptsize $\boldsymbol{\frac{1}{2}}$};
        \draw[-] (2) -- (4) node[midway,xshift=-2ex, fill=white, inner sep=1pt, text=mblue] {\scriptsize $\boldsymbol{2}$}
        node[midway,xshift=2ex, fill=white, inner sep=1pt, text=mblue] {\scriptsize $\boldsymbol{2}$};
        \draw[-] (3) -- (7) node[midway,xshift=-1ex,yshift=1.5ex, fill=white, inner sep=1pt, text=mblue] {\scriptsize $\frac{1}{2}$}
        node[midway,xshift=1ex,yshift=-1.5ex, fill=white, inner sep=1pt, text=mblue] {\scriptsize $\frac{1}{2}$};
        \draw[-] (4) -- (5) node[midway,xshift=-2ex, fill=white, inner sep=1pt, text=mblue] {\scriptsize $\frac{1}{2}$}
        node[midway,xshift=2ex, fill=white, inner sep=1pt, text=mblue] {\scriptsize $\boldsymbol{\frac{1}{2}}$};
        \draw[-] (4) -- (6) node[midway,xshift=-1ex,yshift=1.5ex, fill=white, inner sep=1pt, text=mblue] {\scriptsize $\boldsymbol{\frac{7}{2}}$}
        node[midway,xshift=1ex,yshift=-1.5ex, fill=white, inner sep=1pt, text=mblue] {\scriptsize $\frac{7}{2}$};
        \draw[-] (5) -- (6) node[midway,xshift=1ex,yshift=1.5ex, fill=white, inner sep=1pt, text=mblue] {\scriptsize $\boldsymbol{\frac{9}{2}}$}
        node[midway,xshift=-1ex,yshift=-1.5ex, fill=white, inner sep=1pt, text=mblue] {\scriptsize $\boldsymbol{\frac{9}{2}}$};
        \draw[-] (6) -- (8) node[midway,xshift=-1ex,yshift=-1.5ex, fill=white, inner sep=1pt, text=mblue] {\scriptsize $\boldsymbol{\frac{9}{2}}$}
        node[midway,xshift=1ex,yshift=1.5ex, fill=white, inner sep=1pt, text=mblue] {\scriptsize $\boldsymbol{\frac{9}{2}}$};
        \draw[-] (7) -- (8) node[midway,xshift=-2ex, fill=white, inner sep=1pt, text=mblue] {\scriptsize $\boldsymbol{\frac{3}{2}}$}
        node[midway,xshift=2ex, fill=white, inner sep=1pt, text=mblue] {\scriptsize $\frac{3}{2}$};
        \draw[-] (7) -- (9) node[midway,xshift=-1ex,yshift=1.5ex, fill=white, inner sep=1pt, text=mblue] {\scriptsize $\boldsymbol{3}$}
        node[midway,xshift=1ex,yshift=-1.5ex, fill=white, inner sep=1pt, text=mblue] {\scriptsize $\boldsymbol{3}$};
        \draw[-] (8) -- (9) node[midway,xshift=1ex,yshift=1.5ex, fill=white, inner sep=1pt, text=mblue] {\scriptsize $\boldsymbol{\frac{7}{2}}$}
        node[midway,xshift=-1ex,yshift=-1.5ex, fill=white, inner sep=1pt, text=mblue] {\scriptsize $\boldsymbol{\frac{7}{2}}$};

        \draw[circle,preaction={fill=white},pattern=north east lines,pattern color=gray!70,minimum size=17pt,inner sep=0pt,line width=1.5pt] (1) circle (9pt) node[label=center: $\boldsymbol{1}$] {};
        \draw[circle,fill=white,minimum size=17pt,inner sep=0pt] (2) circle (9pt) node[label=center: $2$] {};
        \draw[circle,fill=white,minimum size=17pt,inner sep=0pt] (3) circle (9pt) node[label=center: $3$] {};
        \draw[circle,fill=white,minimum size=17pt,inner sep=0pt] (4) circle (9pt) node[label=center: $4$] {};
        \draw[circle,fill=white,minimum size=17pt,inner sep=0pt] (5) circle (9pt) node[label=center: $5$] {};
        \draw[circle,preaction={fill=white},pattern=north east lines,pattern color=gray!70,minimum size=17pt,inner sep=0pt,line width=1.5pt] (6) circle (9pt) node[label=center: $\boldsymbol{6}$] {};
        \draw[circle,fill=white,minimum size=17pt,inner sep=0pt] (7) circle (9pt) node[label=center: $7$] {};
        \draw[circle,preaction={fill=white},pattern=north east lines,pattern color=gray!70,minimum size=17pt,inner sep=0pt,line width=1.5pt] (8) circle (9pt) node[label=center: $\boldsymbol{8}$] {};
        \draw[circle,fill=white,minimum size=17pt,inner sep=0pt] (9) circle (9pt) node[label=center: $9$] {};

        \foreach \pos/\label/\xshift/\yshift in {
        1/{9}/0em/1.5em,
        2/{\frac{13}{2}}/0em/1.5em,
        3/{5}/-1.5em/0em,
        4/{\frac{11}{2}}/0em/1.5em,
        5/{5}/0/1.5em,
        6/{9}/1.5em/0em,
        7/{\frac{9}{2}}/-1.5em/0em,
        8/{8}/1.5em/0em,
        9/{\frac{13}{2}}/-1.5em/0em
    }{
        \node[text=purple] at ([xshift=\xshift, yshift=\yshift]\pos) {\scriptsize $\label$};
    }
      \end{tikzpicture}
\subcaption{$\textnormal{UB}_3(G, \boldsymbol c_b, 3)=26$}
\label{subfig:graph2_hfb}
\end{subfigure}
\end{figure}

\subsection{Theoretical comparison between the second and third upper bounds}\label{sec:ub2_vs_ub3}

In this subsection, we theoretically compare $\HFBbound$ and $\JAPbound$. While $\JAPbound$ depends on both the coloring $\mathscr{C}$ and the ordering of its independent sets, $\HFBbound$ 
depends only on~$k$, an upper bound on the clique number of~$G$. 
Since any coloring $\mathscr{C}$ provides an upper bound $|\mathscr{C}|$ on the clique number of~$G$, we set $k = |\mathscr{C}|$ to compare the two upper bounds.
We remark that a tighter upper bound on the clique number of~$G$ would yield a smaller~$k$ and thus a stronger $\HFBbound$. 
The following proposition states that, under the choice $k = |\mathscr{C}|$, $\JAPbound$ is always within a factor of~$2$ of~$\HFBbound$, independently of the total order~$\prec$.

\begin{proposition}\label{prop:UB2_leq_2UB3}
For every EWMCP instance $(G,\boldsymbol c)$, for every coloring $\mathscr{C}$ of the vertices of $G$ and for every total order $\prec$ of the independent sets of $\mathscr{C}$, it holds:
\begin{equation}
    \JAPbound \leq 2\;\mathrm{UB}_3(G, \boldsymbol c, |\mathscr{C}|).
\end{equation}
\end{proposition}

\begin{proof}
For every vertex $u \in V$, by the definition~\eqref{eq:jap_weight}, the total weight assigned to~$u$ in $\JAPbound$ is:
\[
\sigma_u 
\;=\; \sum_{I \in \mathscr{C} \, : \, I \prec I(u)} \max \left\{\, c_e : e \in \delta(u),\ v \in I \,\right\},
\]
which is a sum of at most $|\{I \in \mathscr{C} \, : \, I \prec I(u)\}| \leq |\mathscr{C}|-1$ terms, each being the maximum edge-weight among edges connecting~$u$ to vertices in a given independent set.

Since the independent sets $I \prec I(u)$ are pairwise disjoint, the terms in $\sigma_u$ are weights of edges incident to $u$ with distinct endpoints. 
Hence, $\sigma_u$ is bounded above by the sum of the $|\mathscr{C}|-1$ heaviest weights of edges incident to~$u$. Therefore, using the definition of ${\eta}_u$ given in~\eqref{eq:eta_ub3}, we obtain:
\begin{equation}\label{eq:sigma_le_2eta}
\sigma_u \;\le\; \sum_{e\in{\delta}_{|\mathscr{C}|-1}(u)} 
c_{e} \;=\; 2{\eta}_u.
\end{equation}

It follows that, for every independent set $I \in \mathscr{C}$,
\[
\max\{\,\sigma_u : u \in I\,\}
\;\le\;
2\,\max\{\,{\eta}_u : u \in I\,\}.
\]

For each $I \in \mathscr{C}$, let $u_I^* \in \arg\max\{\,{\eta}_u : u \in I\,\}$. Hence, by the definition of $\varphi$, 

\[
\sum_{I \in \mathscr{C}} {\eta}_{u_I^*}
\;\le\;
\sum_{j=1}^{|\mathscr{C}|} {\eta}_{\varphi(j)}
\;=\;
\mathrm{UB}_3(G, \boldsymbol c, |\mathscr{C}|).
\]
Combining the above inequalities, we obtain
\[
\JAPbound
\;=\;
\sum_{I \in \mathscr{C}} \max\{\,\sigma_u : u \in I\,\}
\;\le\;
2\sum_{I \in \mathscr{C}} {\eta}_{u_I^*}
\;\le\;
2\,\mathrm{UB}_3(G, \boldsymbol c, |\mathscr{C}|),
\]
which concludes the proof.
\end{proof}

To show that the factor of~$2$ in Proposition~\ref{prop:UB2_leq_2UB3} is asymptotically tight, we construct a family of instances for which the ratio between the upper bound $\JAPbound$ and $\HFBbound$ converges to~$2$ as the number of vertices tends to infinity.

For every integer $k \geq 2$, let $G^4_k = (V^4_k, E^4_k)$ be a graph where the vertex set $V^4_k$ is partitioned into $k$ independent sets, each of cardinality~$k-1$, defining a coloring $\mathscr{C}^4_k$, and let $\prec^4_k$ be a total order on $\mathscr{C}^4_k$. Thus, the vertex set has cardinality $|V^4_k| = k(k-1)$.

The edge set $E^4_k$ is defined as follows. For every pair of independent sets $I, J \in \mathscr{C}^4_k$ with $I \prec^4_k J$, we select a different pair of vertices $\{v_{I,J},v_{J,I}\}$, with vertex $v_{I,J} \in I$ and vertex $v_{J,I} \in J$, and connect them by an edge of weight~$1$. Hence, the $\binom{k}{2}$ edges of weight~$1$ form a perfect matching on~$V^4_k$, meaning that each vertex is incident to exactly one edge of positive weight.

Then, every pair of non-adjacent vertices belonging to distinct independent sets is connected by an edge of weight~$0$.
These edge-weights define the edge-weight vector $\boldsymbol{c}^4_k$ for the graph~$G^4_k$. 

Since each vertex belongs to an independent set of cardinality~$k-1$ and is adjacent to all vertices in the remaining~$k-1$ independent sets, it follows that $|\delta(v)| = (k-1)^2$ for every $v \in V^4_k$, with exactly one edge in $\delta(v)$ with weight $1$ and the remaining $(k-1)^2 - 1$ having weight $0$.

By construction, the coloring $\mathscr{C}^4_k$ uses the minimum number of independent sets for the graph $G^4_k$.
An illustration of a graph of the family $G^4_k$ for $k = 3$ is provided in Figure~\ref{fig:tightness_k3}.

{
\begin{figure}
\centering
\caption{
Representation of the graph $G^4_3$, where the vertices are colored using the minimum number of colors $k = 3$. The label of each vertex is displayed inside the node. The three independent 
sets are $I = \{1, 6\}$ (red), $J = \{2, 3\}$ (green), and $K = \{4, 5\}$ (yellow). {The total order is given by $I \prec J \prec K$.} Edges of weight~$1$ (black) form a perfect matching on $V^4_3$, with their weight shown next to them; edges of weight~$0$ (gray) ensure the existence of a clique of cardinality~$3$.
\label{fig:tightness_k3}
}

\vspace{1ex}

\begin{tikzpicture}[scale=1.29]
\footnotesize
\tikzset{
  zero edge/.style={gray!80!white},
  heavy edge/.style={very thick}
}

\def\R{2}
\coordinate (1) at (90:\R);
\coordinate (2) at (30:\R);
\coordinate (3) at (-30:\R);
\coordinate (4) at (-90:\R);
\coordinate (5) at (-150:\R);
\coordinate (6) at (150:\R);

\draw[heavy edge] (1) -- (2) node[midway, yshift=2ex] {$1$};

\draw[heavy edge] (3) -- (4) node[midway, yshift=-2ex] {$1$};
\draw[heavy edge] (5) -- (6) node[midway, xshift=-2ex] {$1$};

\draw[zero edge] (1) -- (3) node[midway, yshift=-2ex] {};
\draw[zero edge](1) -- (4) node[midway, yshift= 2ex]  {};

\draw[zero edge] (2) -- (5) node[midway, yshift=-2ex]  {};
\draw[zero edge](2) -- (6) node[midway, yshift= 2ex] {};

\draw[zero edge] (3) -- (5) node[midway, yshift=-2ex]  {};
\draw[zero edge] (4) -- (6) node[midway, yshift= 2ex]  {};

\draw[zero edge] (4) -- (2) node[midway, yshift= 2ex] {};
\draw[zero edge](1) -- (5) node[midway, yshift= -2ex] {};

\draw[zero edge] (3) -- (6) node[midway, yshift= -2ex] {};

\draw[circle,fill=red!25, minimum size=17pt,inner sep=0pt] (1) circle (9pt) node {$v_{I,J}$};
\draw[circle,fill=green!25,minimum size=17pt,inner sep=0pt] (2) circle (9pt) node {$v_{J,I}$};
\draw[circle,fill=green!25,minimum size=17pt,inner sep=0pt] (3) circle (9pt) node {$v_{J,K}$};
\draw[circle,fill=yellow!100!orange!55,minimum size=17pt,inner sep=0pt] (4) circle (9pt) node {$v_{K,J}$};
\draw[circle,fill=yellow!100!orange!55,minimum size=17pt,inner sep=0pt] (5) circle (9pt) node {$v_{K,I}$};
\draw[circle,fill=red!25, minimum size=17pt,inner sep=0pt] (6) circle (9pt) node {$v_{I,K}$};
\end{tikzpicture}
\end{figure}
}

The following proposition shows that the ratio between the upper bound $\JAPbound$ and $\HFBbound$ can be arbitrarily close to~$2$, thus proving the asymptotic tightness of Proposition~\ref{prop:UB2_leq_2UB3}.

\begin{proposition} \label{prop:tightness_UB2_UB3}
The sequence $\{(G^4_k, \boldsymbol{c}^4_k)\}_{k \geq 2}$ of EWMCP instances is such that
\begin{equation*}
\frac{\JAPquattrok}{\HFBquattrok} \rightarrow 2 \quad ~~\text{as}~~ \quad k \rightarrow +\infty,
\end{equation*}
where $\mathscr{C}^4_k$ is the coloring of $G^4_k$ with cardinality $k$ defined above and $\prec^4_k$ defines any total order of the independent sets in $\mathscr{C}^4_k$.
\end{proposition}

\begin{proof}
Let us first determine the value of $\HFBquattrok$. By construction of $G^4_k$, each vertex $v_{I,J} \in I$ is incident to exactly one edge of positive weight, namely the edge $\{v_{I,J},v_{J,I}\}$ of weight~$1$, while all the other $(k-1)^2-1$ incident edges have weight~$0$. 
Therefore, by definition~\eqref{eq:eta_ub3}, for every $v_{I,J} \in V^4_k$ we have
\[
    \eta_{v_{I,J}}
    = \frac{1}{2}\sum_{e \in {\delta}_{k-1}(v_{I,J})} c_e
    = \frac{1}{2}.
\]

Since $\eta_u = 1/2$ for every $u \in V^4_k$, it follows from definition \eqref{eq:ub3} that
\[
    \HFBquattrok
    = \sum_{j=1}^{k} \eta_{\varphi(j)}
    = k \cdot \frac{1}{2}
    = \frac{k}{2}.
\]

Let us now compute $\JAPquattrok$. 
According to definition~\eqref{eq:jap}, this amounts to determining, for every independent set $I\in\mathscr{C}^4_k$, the quantity
\[
\max\{\sigma_u : u\in I\},
\]
that is the maximum total weight assigned to a vertex in $I$. We distinguish two cases:
\begin{enumerate}
\item[(i)] Let $I$ be the lower independent set in the order $\prec^4_k$. Then, there is no $J \in \mathscr{C}^4_k$ such that $J \prec^4_k I$. 
Hence, by definition~\eqref{eq:jap_weight}, for every vertex $v_{I,J} \in I$, the sum defining $\sigma_{v_{I,J}}$ is empty, and therefore
\[
    \sigma_{v_{I,J}} \;=\; 0.
\]
Thus, $\max\{\,\sigma_u : u \in I\,\} = 0.$

\item[(ii)] Let $I$ be any other independent set under $\prec^4_k$.
For every $J \in \mathscr{C}^4_k$ with $J \prec^4_k I$, by construction there is a vertex $v_{I,J} \in I$ incident to the edge $\{v_{I,J}, v_{J,I}\}$ of weight $1$, where $v_{J,I} \in J$. All other edges incident to $v_{I,J}$ have weight $0$.
Therefore, for every $v_{I,J} \in I$ with $J \prec^4_k I$,
    \[
        \sigma_{v_{I,J}} \;=\; 1.
    \]
   
On the other hand, for every $K \in \mathscr{C}^4_k$ with $I \prec^4_k K$, the unique positive-weight edge incident to $v_{I,K} \in I$ connects it to the vertex $v_{K,I}$ in $K$ and therefore does not contribute to the sum in~\eqref{eq:jap_weight}; hence
    \[
        \sigma_{v_{I,K}} \;=\; 0.
    \]
It follows that $\max\{\,\sigma_u : u \in I\,\} = 1$.
\end{enumerate}

Since $\mathscr{C}^4_k$ consists of $k$ independent sets and exactly one of them is the first under the total order $\prec^4_k$, we obtain
\[
    \JAPquattrok \;=\; \sum_{I \in \mathscr{C}^4_k} \max\{\,\sigma_u : u \in I\,\}
    \;=\; 0 + \underbrace{1 + 1 + \cdots + 1}_{k-1 \text{ terms}}
    \;=\; k-1.
\]
Finally, the ratio between the two upper bounds is 
\[
    \frac{\JAPquattrok}{\HFBquattrok} \;=\; \frac{k-1}{\dfrac{k}{2}}
    \;=\; \frac{2(k-1)}{k} \;=\; 2 - \frac{2}{k},
\]
and therefore
\[
    \lim_{k \to +\infty} \frac{\JAPquattrok}{\HFBquattrok} \;=\; 2,
\]
which concludes the proof.

\end{proof}

We further provide a sequence of EWMCP instances in which the ratio between the upper bound $\HFBbound$ and $\JAPbound$ becomes arbitrarily large as the number of vertices in the graphs tends to infinity.

For every integer $n \geq 2$, let $G^5_n = (V^5_n, E^5_n)$ be a graph where the vertex set $V^5_n$ has cardinality $2n$ and is partitioned into an independent set $I \subseteq V^5_n$ and a clique $C \subseteq V^5_n$, where 
\[
I = \{1, 2, \dots, n\}, \qquad C = \{n+1, n+2, \dots, 2n\}.
\]
The edge set $E^5_n$ of the graph includes all edges within the clique, each assigned an edge-weight of~$0$, and all edges connecting each vertex in~$I$ to every vertex in~$C$, each assigned an edge-weight of~$1$. 
These edge-weights define the edge-weight vector $\boldsymbol{c}^5_n$ for the graph~$G^5_n$. An illustration of this graph is provided in Figure~\ref{fig:hfb_jap_wins}.

Since $G^5_n$ contains a clique of cardinality~$n$, any coloring of its vertices requires at least~$n$ colors. Moreover, every vertex in~$I$ is adjacent to every vertex in~$C$, so the vertices of~$I$ require at least one additional color, giving a total of at least $n+1$ colors.
A coloring $\mathscr{C}^5_n$ of the vertices of~$G^5_n$ with the minimum number $n+1$ of colors is given by 
\[
\mathscr{C}^5_n = \{I, \{n+1\}, \{n+2\}, \dots, \{2n\}\}.
\]

\begin{figure}[H]
    \centering
    \caption{
    Representation of the graph \( G^5_n \), where the vertices are colored using the minimum number of colors. Each vertex is labeled with a natural number displayed inside the node. The dashed ellipse on the left represents the independent set \( I \), while the ellipse on the right represents the clique \( C \). Edges of weight~$1$ connecting a vertex in~$I$ to a vertex in~$C$ are shown in black, while edges of weight~$0$ within~$C$ are shown in gray. Edge-weight labels are omitted from the figure.
    \label{fig:hfb_jap_wins}
  }
    \begin{tikzpicture}[scale=1.29]

        \pgfmathsetmacro{\a}{1.6}
        \pgfmathsetmacro{\r}{sqrt((\a)^2 - (0.5*\a)^2)}

        \draw (1,0) coordinate (1);
        \draw (1,-\r) coordinate (2);
        \draw (1,-3*\r) coordinate (3);

        \draw (5,0) coordinate (4);
        \draw (5,-\r) coordinate (5);
        \draw (5,-3*\r) coordinate (6);

        \draw (3, -2*\r) node {$\spacedvdots$};
        \draw (1, -2*\r) node {$\spacedvdots$};
        \draw (5, -2*\r) node {$\spacedvdots$};

        \draw[-]  (1) -- (4);
        \draw[-]  (1) -- (5);
        \draw[-]  (2) -- (4);
        \draw[-]  (2) -- (5);
        \draw[-]  (3) -- (4);
        \draw[-]  (3) -- (5);
        \draw[-]  (3) -- (6);
        \draw[-]  (2) -- (6);
        \draw[-]  (1) -- (6);

        \draw[-, gray]  (4) -- (5);
        \draw[-, gray] (5) to[out=20, in=20] (6);
        \draw[-, gray] (4) to[out=0, in=0] (6);

        \node[draw, dashed, ellipse, minimum width=4.4cm, inner sep=0pt, fit=(1) (2) (3), label=above: $I$] {};
        \node[draw, ellipse, minimum width=4.4cm, inner sep=0pt, fit=(4) (5) (6), label=above: $C$] {};
        
        \draw[circle,fill=red!25,minimum size=17pt,inner sep=0pt] (1) circle (9pt) node[label=center: \scriptsize{$1$}] {};
        \draw[circle,fill=red!25,minimum size=17pt,inner sep=0pt] (2) circle (9pt) node[label=center: \scriptsize{$2$}] {};
        \draw[circle,fill=red!25,minimum size=17pt,inner sep=0pt] (3) circle (9pt) node[label=center: \scriptsize{$n$}] {};

        \draw[circle,fill=green!25,minimum size=17pt,inner sep=0pt] (4) circle (9pt) node[label=center: \scriptsize{$n\!+\!1$}] {};
        \draw[circle,fill=yellow!100!orange!55,minimum size=17pt,inner sep=0pt] (5) circle (9pt) node[label=center: \scriptsize{$n\!+\!2$}] {};       
        \draw[circle,fill=azzurro!30,minimum size=17pt,inner sep=0pt] (6) circle (9pt) node[label=center: \scriptsize{$2n$}] {};
    
    \end{tikzpicture}
\end{figure}

The following proposition states that the ratio between the upper bound $\HFBbound$
and $\JAPbound$ can be arbitrarily large.

\begin{proposition}\label{prop:hfb_over_jap}
The sequence $\{(G^5_n, \boldsymbol{c}^5_n)\}_{n \geq 2}$ of EWMCP instances is such that
\begin{equation*}
\frac{\mathrm{UB}_3(G^5_n, \boldsymbol{c}^5_n, n+1)}{\mathrm{UB}_2(G^5_n, \boldsymbol{c}^5_n, \mathscr{C}^5_n, \prec^5_n)} \rightarrow +\infty \quad ~~\text{as}~~ \quad n \rightarrow +\infty,
\end{equation*}
where $\mathscr{C}^5_n$ is the coloring of $G^5_n$ with cardinality $n+1$ defined above and $\prec^5_n$ defines any total order of the independent sets in $\mathscr{C}^5_n$.
\end{proposition}

\begin{proof} 

We first compute $\HFBcinquen$, with {$k = |\mathscr{C}^5_n| = n+1$}. Since $k - 1 = n$, for every vertex $u \in V^5_n$, the set ${\delta}_n(u)$ consists of the $n$ heaviest edges incident to~$u$.

For every vertex $u \in I$, the vertex~$u$ has exactly $n$ incident edges (one to each vertex of~$C$), all of weight~$1$. Therefore, for $u\in I$, 
by definition~\eqref{eq:eta_ub3},
\[
\eta_u = \frac{1}{2} \sum_{e \in {\delta}_n(u)} c_e = \frac{1}{2} \cdot n \cdot 1 = \frac{n}{2}.
\]

For every vertex $u \in C$, the vertex $u$ has $n$ incident edges to the vertices of~$I$, each of weight~$1$, and $n - 1$ incident edges to the other vertices of~$C$, each of weight~$0$. 
Thus, ${\delta}_n(u)$ consists of the $n$ edges connecting~$u$ to each vertex of~$I$, and
\[
\eta_u = \frac{1}{2} \sum_{e \in {\delta}_n(u)} c_e = \frac{1}{2} \cdot n \cdot 1 = \frac{n}{2}.
\]

Since $\eta_u = n/2$ for every $u \in V^5_n$, it follows from definition~\eqref{eq:ub3} that
\[
\HFBcinquen = \sum_{j=1}^{n+1} \eta_{\varphi(j)} = (n+1) \cdot \frac{n}{2} = \frac{n(n+1)}{2}.
\]

We now compute $\JAPcinquen$ for an arbitrary total order $\prec^5_n$. 

We first determine~$\sigma_u$ for every vertex $u \in I$. Suppose that the independent set $I$ occupies position $p$, with $p\in\{1,2,\ldots,n+1\}$, in the total order $\prec^5_n$, so that $I$ is preceded by exactly $p-1$ independent sets, each containing a single vertex of $C$.

Since every vertex $u \in I$ is connected to every vertex of~$C$ by an edge of weight~$1$, each independent set $J\prec^5_n I$ contributes~$1$ to the sum in 
definition~\eqref{eq:jap_weight}. Hence, for every $u \in I$,
\[
\sigma_u = \sum_{\substack{J \in \mathscr{C}^5_n \colon J \prec^5_n I}} \max\{c_e : e \in \delta(u),\, v \in J\} = (p-1) \cdot 1 = p - 1.
\]

Let us now determine $\sigma_u$ for every vertex $u \in C$.
By definition of the coloring $\mathscr{C}^5_n$, each vertex $u \in C$ belongs to a distinct independent set $\{u\} \in \mathscr{C}^5_n$. We distinguish two cases, depending on whether~$\{u\}$ precedes or follows~$I$ under the total order $\prec^5_n$.

If $\{u\}$ precedes $I$ under the total order $\prec^5_n$, then only independent sets containing a single vertex of $C$ precede $\{u\}$, each contributing $0$ to the sum in definition~\eqref{eq:jap_weight}, since all edges within $C$ have weight $0$. Hence, for every $u \in C$ with $\{u\} \prec^5_n I$,
\[
\sigma_u = \sum_{\substack{J \in \mathscr{C}^5_n \colon J \prec^5_n \{u\}}} \max\{c_e : e \in \delta(u),\, v \in J\} = 0.
\]

If instead $I \prec^5_n \{u\}$, then the independent set $I$ contributes $1$ to the sum in definition~\eqref{eq:jap_weight}, since $u$ is connected to every vertex of $I$ by an edge of weight $1$. All other preceding independent sets contain a single vertex of~$C$, each contributing~$0$ as above. Hence, for every $u \in C$ with $I \prec^5_n \{u\}$,
\[
\sigma_u = \sum_{\substack{J \in \mathscr{C}^5_n \colon J \prec^5_n \{u\}}} \max\{c_e : e \in \delta(u),\, v \in J\} = 1.
\]

Since exactly $p-1$ independent sets precede~$I$ and $n-p+1$ follow it in the total order $\prec^5_n$, there are $n-p+1$ vertices $u \in C$ with $I \prec^5_n \{u\}$ and $p-1$ vertices $u \in C$ with $\{u\} \prec^5_n I$. By definition~\eqref{eq:jap}, we obtain
\begin{align*}
\JAPcinquen &= \sum_{J \in \mathscr{C}^5_n} 
  \max\{\sigma_u : u \in J\} \\
&= \max\{\sigma_u : u \in I\} 
  + \sum_{\{u\} \in \mathscr{C}^5_n \setminus \{I\}} \sigma_u \\
&= (p-1) + (n-p+1) \cdot 1 + (p-1) \cdot 0 \\
&= n.
\end{align*}

Hence $\JAPcinquen = n$ for every total order $\prec^5_n$.

Finally, the ratio between the two upper bounds is
\[
\frac{\HFBcinquen}{\JAPcinquen} = \frac{\dfrac{n(n+1)}{2}}{n} = \frac{n+1}{2},
\]
and therefore
\[
\lim_{n \to +\infty} \frac{\HFBcinquen}{\JAPcinquen} = +\infty,
\]
which concludes the proof.
\end{proof}

\subsection{Theoretical comparison between $\HFBbound$ and $\OSBound$} \label{sec:ub3_vs_ub1}

We show that either of the two upper bounds $\HFBbound$ and $\OSBound$ can be arbitrarily worse than the other, by using the families of instances $\{(G^2_n, \boldsymbol{c}^2_n)\}$ and $\{(G^3_n, \boldsymbol{c}^3_n)\}$ introduced in Section~\ref{sec:THEOR_COMP}.

\begin{proposition}\label{prop:hfb_over_os}
The sequence $\{(G^2_n,\boldsymbol c^2_n)\}$ of EWMCP instances is such that
\begin{equation*}
\frac{\mathrm{UB}_3(G^2_n,\boldsymbol c^2_n,n)}{\OSBduen} \rightarrow + \infty \quad ~~\text{as}~~ \quad n \rightarrow +\infty,
\end{equation*}
where $\mathscr{C}^2_n$ is the coloring of $G^2_n$ with cardinality $n$, defined in Section~\ref{sec:THEOR_COMP}.

\end{proposition}

\begin{proof}
By \Cref{prop:example_itaspa_wins}, for any total order $\prec^2_n$ of the independent sets of $\mathscr{C}^2_n$, we have
\[
\frac{\JAPduen}{\OSBduen} \rightarrow +\infty \quad \text{as} \quad n \rightarrow +\infty.
\]
By \Cref{prop:UB2_leq_2UB3}, with $k = |\mathscr{C}^2_n| = n$ it holds that, for every $n \geq 1$,
\[
\mathrm{UB}_3(G^2_n,\boldsymbol c^2_n,n) \geq \frac{1}{2}\,\JAPduen.
\]
Therefore,
\[
\lim_{n \to +\infty} \frac{\mathrm{UB}_3(G^2_n,\boldsymbol c^2_n,n)}{\OSBduen} 
\;\geq\; \lim_{n \to +\infty} \frac{1}{2}\,\frac{\JAPduen}{\OSBduen} 
\;=\; +\infty.
\]
\end{proof}

\begin{proposition}\label{prop:os_over_hfb}
The sequence $\{(G^3_n,\boldsymbol c^3_n)\}$ of EWMCP instances is such that
\begin{equation*}
\frac{\OSBtren}{\mathrm{UB}_3(G^3_n,\boldsymbol c^3_n,2)} \rightarrow + \infty \quad ~~\text{as}~~ \quad n \rightarrow +\infty,
\end{equation*}
where $\mathscr{C}^3_n$ is the coloring of $G^3_n$ with cardinality~$2$, defined in Section~\ref{sec:THEOR_COMP}.
\end{proposition}

\begin{proof}
As shown in the proof of \Cref{prop:example_japan_wins}, $\OSBtren = n$.

Let us now compute $\mathrm{UB}_3(G^3_n, \boldsymbol c^3_n, 2)$. Let $k = 2 = |\mathscr{C}^3_n|$. Since $k = 2$, for every vertex $u \in V^3_n$, the set $\delta_{k-1}(u) = \delta_1(u)$ contains a single edge, namely the heaviest edge incident to~$u$. 
Since all edge-weights are equal to~$1$, by definition~\eqref{eq:eta_ub3}, we have
\[
\eta_u = \frac{1}{2}\sum_{e \in \delta_1(u)} c_e = \frac{1}{2} \cdot 1 
= \frac{1}{2}
\]
for every $u \in V^3_n$.
It follows from definition~\eqref{eq:ub3} that
\[
\mathrm{UB}_3(G^3_n, \boldsymbol c^3_n, 2) = \sum_{j=1}^{k} \eta_{\varphi(j)} 
= 2 \cdot \frac{1}{2} = 1.
\]
Therefore,
\[
\lim_{n \to +\infty} \frac{\OSBtren}{\mathrm{UB}_3(G^3_n, \boldsymbol c^3_n, 2)} 
= \lim_{n \to +\infty} \frac{n}{1} = +\infty.
\]
\end{proof}

\subsection{Theoretical comparison between $\HFBbound$ and $\optimum$}\label{sec:ub3_vs_optimum}

Finally, the following proposition shows that the ratio between $\HFBbound$ and the edge-weighted clique number $\optimum$ can be arbitrarily large, by considering the family of instances $\{(G^1_n, \boldsymbol{c}^1_n)\}$ introduced in Section~\ref{sec:WCA}.

\begin{proposition}\label{prop:example_bad_hfb}
The sequence $\{(G^1_n,\boldsymbol c^1_n)\}$ of EWMCP instances is such that
\begin{equation*}
\frac{\HFBunon}{\optunon} \rightarrow + \infty \quad   ~~\text{as}~~ \quad n \rightarrow +\infty,
\end{equation*}
where $n$ coincides with the clique number of $G^1_n$. 
\end{proposition}

\begin{proof}
By Proposition~\ref{prop:example_bad_japan}, for any coloring $\mathscr{C}^1_n$ of $G^1_n$ and any total order $\prec^1_n$ of the independent sets of $\mathscr{C}^1_n$, we have
\[
\frac{\JAPunon}{\optunon} \rightarrow +\infty \quad \text{as} \quad n \rightarrow +\infty.
\]
By construction, the minimum possible number of colors for $G^1_n$ is $n$; let $\mathscr{C}^1_n$ be a coloring with $|\mathscr{C}^1_n| = n$. By Proposition~\ref{prop:UB2_leq_2UB3} with $k = |\mathscr{C}^1_n| = n$, it holds that, for every $n \geq 1$,\[
\HFBunon \geq \frac{1}{2}\,\JAPunon.
\]
Therefore,
\[
\lim_{n \to +\infty} \frac{\HFBunon}{\optunon} \;\geq\; 
\lim_{n \to +\infty} \frac{1}{2}\,\frac{\JAPunon}{\optunon} \;=\; +\infty.
\]
\end{proof}

\section{Computational comparison}
\label{sec:COMP_COMP}

In this section, we report the results of our extensive experimental analysis, conducted on both DIMACS instances and randomly generated instances commonly used in the literature for the Edge-Weighted Maximum Clique Problem (EWMCP). This computational campaign provides practical insights into the empirical strength of the upper bounds on~$\optimum$ analyzed in this work: $\OSBound$, proposed by \citet{san2019new} and described in Section~\ref{sec:UB1}, $\JAPbound$, proposed by \citet{shimizu2020maximum} and described in Section~\ref{sec:UB2}, and $\HFBbound$, proposed by \citet{hosseinian2020lagrangian} and described in \Cref{sec:UB3}.

The purpose of our tests is twofold. On the one hand, we aim to determine which of the upper bounds is closer to~$\optimum$ in practice. On the other hand, we seek to assess whether their empirical strength depends on instance characteristics (such as the number of vertices and graph density) or on algorithmic choices (such as the type of coloring and the ordering of its independent sets).

Both $\OSBound$ and $\JAPbound$ depend on a coloring $\mathscr{C}$ of the instance graph $G$. In our computational experiments, such coloring is obtained using the DSatur heuristic~\citep{BrelazDSatur}. In the following, we refer to a coloring obtained via the DSatur algorithm as \textit{DSatur coloring}.
Unlike the other two upper bounds, $\HFBbound$ depends on an upper bound $k$ on the clique number of $G$, instead of a coloring of the graph. In order to ensure a fair comparison among the three bounds studied in this paper, we always set $k$ to be equal to the cardinality of the same coloring $\mathscr{C}$ used by $\OSBound$ and $\JAPbound$.
%
%
For the sake of brevity, in the remainder of this section we refer to the upper bounds simply as $\mathrm{UB}_1$, $\mathrm{UB}_2$ and $\mathrm{UB}_3$.

All experiments are conducted on a Linux machine equipped with a 2.30~GHz CPU and 512~GB of RAM. The code is implemented in C$++$, and we use IBM ILOG CPLEX 22.11 to compute~$\UB_1$, as it requires solving the LP Model~\eqref{form:LP-italia-spagna}. 
Extensive preliminary tests show that the most efficient algorithm for this purpose is the barrier method, without performing the crossover phase, since we are only interested in the value of the optimal solution of Model~\eqref{form:LP-italia-spagna}. Indeed, our computational experiments demonstrate that the barrier algorithm solves the problem to optimality in a time orders of magnitude lower than that required by alternative LP solvers, such as the primal simplex, dual simplex, or network simplex methods.

Detailed numerical results, beyond those presented in the paper, along with the source code for computing the upper bounds, are available at the following GitHub repository: \url{https://github.com/FabioCiccarelli/EWMCP_Bounds.git}. This material supports precise benchmarking and facilitates comparisons with alternative approaches. We also hope that our findings and the accompanying resources will stimulate further research on upper bounds for~$\optimum$, with the aim of identifying tighter values that can enhance the performance of bounding procedures in exact branch-and-bound algorithms for the EWMCP.

The remainder of this section is organized as follows. In \Cref{sec:benchmark_datasets}, we describe the benchmark datasets used in the experiments. In \Cref{sec:UB2_ordering}, we analyze the impact of the ordering of the independent sets on the quality of $\UB_2$. In \Cref{sec:dimacs_results} and \Cref{sec:random_tests}, we report the results on DIMACS and RANDOM instances, respectively. In \Cref{sec:branching_study}, we assess the behavior of the three bounds in a simulated branch-and-bound setting. Finally, in \Cref{sec:LP_comparison}, we compare $\UB_1$ with other LP-based upper bounds from the literature.

\subsection{Benchmark datasets of instances}\label{sec:benchmark_datasets}
In this work, we consider both DIMACS instances and randomly generated (RANDOM) instances.
\begin{itemize}
    \item The DIMACS instances, see~\cite{dimacs2017}, consist of 80 graphs that are commonly used in the literature to evaluate the performance of exact algorithms for the EWMCP, such as those proposed by \citet{shimizu2020maximum}, \citet{san2019new} and \citet{hosseinian2020lagrangian}. These algorithms are able to determine~$\optimum$ for 48 of the 80 instances, making this benchmark a valuable reference for assessing the strength of upper bounds.

    \item The RANDOM instances are generated according to specific pairs of number of vertices and edge density~$(|V|, \mu)$, following common practice in the literature. For each combination of \(|V| \in \{10, 20, \dots, 100\}\) and \(\mu \in \{0.1, 0.2, \dots, 0.9\}\), we generate 10 instances. Additionally, for the case \(|V| = 100\), we include higher densities in the range \([0.91, 0.99]\) with a step size of~0.01, resulting in a total of 990 graphs. Notice that the 10 instances for each combination $(|V|, \mu)$ are generated independently from each other.
    These instances are designed to evaluate the influence of instance characteristics on upper bound quality. Graphs with up to 100 vertices are considered, as they can be solved to optimality by state-of-the-art exact methods, allowing for a direct comparison with~$\optimum$. Smaller graphs are included to assess the impact of the number of vertices, while high-density graphs (91–99\%) are included due to their well-known difficulty for exact EWMCP algorithms. 
\end{itemize}

Edge-weights are assigned following a standard approach commonly adopted in the literature~\cite{gouveia2015solving, hosseinian2018nonconvex, shimizu2019branch,san2019new}. Specifically, for each edge \( e  \in E \), the edge-weight is defined as \( \pe_e := ((u + v) \bmod 200) + 1 \).

\subsection{Sensitivity of $\JAPbound$ to the ordering of the independent sets}\label{sec:UB2_ordering}

As discussed in \Cref{sec:UB2}, $\JAPbound$ depends on the total order $\prec$ of the independent sets of $\mathscr{C}$: different orderings yield different vertex weights, as defined in~\eqref{eq:jap_weight} and, consequently, different bound values.

We compare five ordering criteria to determine whether any of them consistently minimizes the value of $\UB_2$: \textit{natural ordering} (NAT), in which the independent sets are kept in the order returned by the coloring algorithm (e.g., DSatur); \textit{non-decreasing size} (SZ$\uparrow$) and \textit{non-increasing size} (SZ$\downarrow$), in which independent sets are sorted by their cardinality; \textit{non-decreasing star-weight score} (SW$\uparrow$) and \textit{non-increasing star-weight score} (SW$\downarrow$), in which independent sets are sorted by the average, over their vertices, of the total weight of the $(k{-}1)$ heaviest edges incident to each vertex $u \in V$, i.e., the total weight of the edges in $\delta_{k-1}(u)$ as defined in \Cref{sec:UB3}.

The size-based orderings are motivated by the observation that the cardinality of each independent set determines the number of candidate vertices competing for the maximum in~\eqref{eq:jap_weight}, so placing larger or smaller sets earlier may affect the bound. The star-weight score provides instead a proxy for the potential contribution of each independent set to the overall bound.

Table~\ref{table:sorting_pctgap_aggregate} compares the five strategies on the DIMACS instances, reporting, for each strategy and instance family, the number of instances for which the strategy yields the tightest upper bound (\#best) and the average percentage increase of the bound relative to the minimum over all five orderings (\% diff.). All the instances have been tested using DSatur as a coloring algorithm. Sorting the independent sets by non-decreasing cardinality (SZ$\uparrow$) dominates the other strategies: it achieves the lowest overall average deviation from the minimum (0.12\%) and yields the tightest bound on 72 of the 80 instances. The ordering by non-decreasing star weight score (SW$\uparrow$) is the second-best alternative, with an average deviation of 3.68\% from the minimum bound, though it is preferable to SZ$\uparrow$ only for the \texttt{MANN} family. The remaining orders (NAT, SZ$\downarrow$, SW$\downarrow$) yield weaker bounds, with average deviations ranging from 4.49\% to 6.83\%. Based on these results, throughout the remainder of the computational section we compute $\UB_2$ by sorting the independent sets of the coloring in non-decreasing order of cardinality.

\begin{table}[ht]
\centering
\footnotesize
\renewcommand\arraystretch{1.3}
\tabcolsep=3.2pt
\caption{Average percentage increase over the best ordering (\% diff.) and number of best results (\#best) per sorting strategy, on DIMACS instances, grouped by instance family. The upper bound is computed using DSatur coloring.}
\label{table:sorting_pctgap_aggregate}
\begin{tabular}{Trrrrrrrrrrrrrrrrr}
\toprule
\multicolumn{2}{l}{Instances}                 &  &  & \multicolumn{2}{r}{NAT} &  & \multicolumn{2}{r}{SZ$\uparrow$} &  & \multicolumn{2}{r}{SZ$\downarrow$} &  & \multicolumn{2}{r}{SW$\uparrow$} &  & \multicolumn{2}{r}{SW$\downarrow$} \\ \cline{1-3} \cline{5-6} \cline{8-9} \cline{11-12} \cline{14-15} \cline{17-18} 
\rmfamily{family}  & \# &  &  & \% diff.       & \#best      &  & \% diff.            & \#best          &  & \% diff.            & \#best            &  & \% diff.           & \#best           &  & \% diff.             & \#best           \\ \cline{1-2} \cline{5-6} \cline{8-9} \cline{11-12} \cline{14-15} \cline{17-18} 
\\[-3ex]
brock                           & 12 &  &  & 8.63  & 0 &  & \textbf{0.00} & \textbf{12} &  & 9.06          & 0          &  & 3.02 & 0 &  & 6.10 & 0 \\
C                               & 7  &  &  & 5.41  & 0 &  & \textbf{0.00} & \textbf{7}  &  & 6.31          & 0          &  & 3.49 & 0 &  & 5.05 & 0 \\
c-fat                           & 7  &  &  & 6.23  & 0 &  & \textbf{0.65} & \textbf{5}  &  & 7.24          & 0          &  & 5.85 & 1 &  & 5.82 & 1 \\
DSJC                            & 2  &  &  & 8.59  & 0 &  & \textbf{0.00} & \textbf{2}  &  & 9.06          & 0          &  & 4.40 & 0 &  & 5.81 & 0 \\
gen                             & 5  &  &  & 8.21  & 0 &  & \textbf{0.00} & \textbf{5}  &  & 10.38         & 0          &  & 5.08 & 0 &  & 6.00 & 0 \\
hamming                         & 6  &  &  & 3.60  & 1 &  & \textbf{0.00} & \textbf{6}  &  & 4.70          & 1          &  & 3.94 & 1 &  & 3.26 & 1 \\
johnson                         & 4  &  &  & 11.25 & 0 &  & \textbf{0.00} & \textbf{4}  &  & 11.35         & 0          &  & 8.88 & 0 &  & 6.68 & 0 \\
keller                          & 3  &  &  & 8.73  & 0 &  & \textbf{0.00} & \textbf{3}  &  & 11.86         & 0          &  & 6.66 & 0 &  & 6.11 & 0 \\
MANN                            & 4  &  &  & 2.42  & 0 &  & 1.04          & 1           &  & \textbf{0.92} & \textbf{3} &  & 1.09 & 0 &  & 1.79 & 0 \\
p\_hat                          & 15 &  &  & 4.25  & 0 &  & \textbf{0.00} & \textbf{15} &  & 5.17          & 0          &  & 3.20 & 0 &  & 2.74 & 0 \\
san                             & 11 &  &  & 2.72  & 1 &  & \textbf{0.09} & \textbf{8}  &  & 3.68          & 0          &  & 1.68 & 0 &  & 2.04 & 2 \\
sanr                            & 4  &  &  & 10.11 & 0 &  & \textbf{0.00} & \textbf{4}  &  & 10.52         & 0          &  & 2.18 & 0 &  & 8.30 & 0     \\[1.2ex] \cline{1-2} \cline{5-6} \cline{8-9} \cline{11-12} \cline{14-15} \cline{17-18} 
Tot/Avg          & 80         &  &  & 6.00          & 2         &  & \textbf{0.12}             & \textbf{72}             &  & 6.83     & 4    &  & 3.68             & 2             &  & 4.49              & 4\\
\bottomrule
\end{tabular}
\end{table}

\subsection{Results for DIMACS  instances}\label{sec:dimacs_results}

In this section, we present the results of the tests conducted on the DIMACS instances. Throughout this section, all bounds are computed using a DSatur coloring $\mathscr{C}$.
Specifically, $\UB_1$ and $\UB_2$ use $\mathscr{C}$ directly, with the independent sets of $\mathscr{C}$ sorted in non-decreasing order of cardinality for $\UB_2$; $\UB_3$ uses instead
$k = |\mathscr{C}|$.

Table~\ref{table:results_dimacs_DSatur} reports detailed results for each individual DIMACS instance. For each instance, the table lists its name, number of vertices~$|V|$, number of edges~$|E|$, the clique number $\omega(G)$ of the instance graph (when known), and the value of~$\optimum$ (when known). The cardinality of the DSatur coloring of the instance graph, as well as the average size of its independent sets, are reported in the following two columns. Finally, the last columns show the value of each of the three upper bounds. For each instance, the smaller of the upper bound values is highlighted in bold.
The results reported in the table clearly show that~$\UB_2$ outperforms~$\UB_1$ and $\UB_3$ on nearly all DIMACS instances, with the only exceptions being {\tt c-fat200-1} and {\tt c-fat200-5}, where $\UB_1$ achieves the smallest value among the three bounds. 
Furthermore, we observe a correlation between the number of vertices $|V|$ and the relative tightness of the bounds. Specifically, for instances with a similar edge density, the relative gap between $\UB_1$ and $\UB_2$ widens as the graph size increases. This behavior is evident within the {\tt C} instance family (with a constant density of roughly~$0.9$), where the ratio $\UB_1 / \UB_2$ grows from approximately $1.45$ for {\tt C125.9} to nearly $1.90$ for {\tt C2000.9}, indicating that~$\UB_2$ scales better on larger topologies.
Graph density also plays a crucial role in the relative performance of the bounds. Observing the {\tt san} instances, it can be noted that the advantage of $\UB_2$ over $\UB_1$ is remarkably pronounced on medium-density graphs. For example, on {\tt san400\_0.5\_1} (density $0.5$), the value of $\UB_1$ is more than seven times larger than that of $\UB_2$. However, as density increases towards $0.9$ (e.g., {\tt san400\_0.9\_1}), this divergence narrows considerably, although $\UB_2$ still maintains a clear superiority.
The results shown in \Cref{table:results_dimacs_DSatur} also highlight a correlation between the performance of $\UB_2$ and that of $\UB_3$. In the vast majority of the test set, $\UB_3$ is slightly higher than $\UB_2$, consistently yielding tighter bound values than $\UB_1$. Noticeable deviations between $\UB_2$ and $\UB_3$ emerge primarily on exceptionally dense or massive graphs, such as those belonging to the {\tt MANN} family, where the differences between the two bounds become more apparent.
Nevertheless, despite the dominance of~$\UB_2$ over~$\UB_1$ and $\UB_3$, it is important to note that all the upper bounds are generally weak and often far from~$\optimum$. On average, the ratio between bound value and the optimal value exceeds the value of 10 for all the considered upper bounds, highlighting their limited tightness. Nevertheless, $\UB_2$ achieves an average ratio slightly above $11$, less than half than that of $\UB_1$, and around $15\%$ smaller than that of $\UB_3$.

\scriptsize
\renewcommand
\arraystretch{1.2}
\tabcolsep=3pt
\begin{longtable}[c]{Trrrrrrrrrrrrrrrrrrr}
\caption{Values of the three upper bounds on DIMACS instances. A DSatur coloring $\mathscr{C}$ is used for computing $\UB_1$ and $\UB_2$. For $\UB_2$, the independent sets of $\mathscr{C}$ are sorted in non-decreasing order of cardinality. $\UB_3$ uses $k = |\mathscr{C}|$.
}
\label{table:results_dimacs_DSatur}\\
\toprule
\multicolumn{5}{l}{Instances}                                   &  & \multicolumn{2}{r}{DSatur coloring} &  &  &                  &  &  &                     &  &  &             \\ \cline{1-5} \cline{7-8}
name             & $|V|$ & $|E|$     & $\omega(G)$ & $\optimum$ &  & card           & avg size           &  &  & $\UB_1$          &  &  & $\UB_2$             &  &  & $\UB_3$     \\ \cline{1-5} \cline{7-8} \cline{11-11} \cline{14-14} \cline{17-17} 
\endfirsthead
\endhead
\endfoot
\endlastfoot
\\[-2.5ex]
brock200\_1      & 200   & 14,834    & 21          & 21,230     &  & 55             & 3.64               &  &  & 346,395          &  &  & \textbf{206,229}    &  &  & 248,468     \\
brock200\_2      & 200   & 9,876     & 12          & 6,542      &  & 34             & 5.88               &  &  & 139,041          &  &  & \textbf{80,625}     &  &  & 96,843      \\
brock200\_3      & 200   & 12,048    & 15          & 10,303     &  & 41             & 4.88               &  &  & 209,408          &  &  & \textbf{121,862}    &  &  & 141,200     \\
brock200\_4      & 200   & 13,089    & 17          & 13,967     &  & 50             & 4.00               &  &  & 271,661          &  &  & \textbf{163,015}    &  &  & 204,973     \\
brock400\_1      & 400   & 59,723    & 27          & --         &  & 98             & 4.08               &  &  & 1,277,900        &  &  & \textbf{679,356}    &  &  & 808,989     \\
brock400\_2      & 400   & 59,786    & 29          & --         &  & 99             & 4.04               &  &  & 1,276,930        &  &  & \textbf{689,958}    &  &  & 824,164     \\
brock400\_3      & 400   & 59,681    & 31          & 46,785     &  & 98             & 4.08               &  &  & 1,269,990        &  &  & \textbf{692,312}    &  &  & 809,756     \\
brock400\_4      & 400   & 59,765    & 33          & 54,304     &  & 100            & 4.00               &  &  & 1,286,460        &  &  & \textbf{717,069}    &  &  & 839,816     \\
brock800\_1      & 800   & 207,505   & 23          & --         &  & 143            & 5.59               &  &  & 3,218,480        &  &  & \textbf{1,524,480}  &  &  & 1,779,610   \\
brock800\_2      & 800   & 208,166   & 24          & --         &  & 146            & 5.48               &  &  & 3,351,520        &  &  & \textbf{1,591,550}  &  &  & 1,849,540   \\
brock800\_3      & 800   & 207,333   & 25          & --         &  & 142            & 5.63               &  &  & 3,183,140        &  &  & \textbf{1,498,410}  &  &  & 1,755,070   \\
brock800\_4      & 800   & 207,643   & 26          & --         &  & 142            & 5.63               &  &  & 3,264,490        &  &  & \textbf{1,513,150}  &  &  & 1,758,500   \\[1.2ex]
C125.9           & 125   & 6,963     & 34          & 66,248     &  & 52             & 2.40               &  &  & 279,002          &  &  & \textbf{192,784}    &  &  & 227,809     \\
C250.9           & 250   & 27,984    & 44          & --         &  & 98             & 2.55               &  &  & 928,441          &  &  & \textbf{637,829}    &  &  & 778,450     \\
C500.9           & 500   & 112,332   & --          & --         &  & 175            & 2.86               &  &  & 3,458,980        &  &  & \textbf{2,159,570}  &  &  & 2,547,210   \\
C1000.9          & 1,000 & 450,079   & --          & --         &  & 316            & 3.16               &  &  & 12,581,900       &  &  & \textbf{7,058,440}  &  &  & 8,277,240   \\
C2000.5          & 2,000 & 999,836   & 16          & --         &  & 223            & 8.97               &  &  & 9,876,680        &  &  & \textbf{3,894,920}  &  &  & 4,461,250   \\
C2000.9          & 2,000 & 1,799,532 & --          & --         &  & 586            & 3.41               &  &  & 47,423,100       &  &  & \textbf{24,977,100} &  &  & 28,891,500  \\
C4000.5          & 4,000 & 4,000,268 & 18          & --         &  & 399            & 10.03              &  &  & 35,853,700       &  &  & \textbf{12,836,300} &  &  & 14,447,900  \\[1.2ex]
c-fat200-1       & 200   & 1,534     & 12          & 7,734      &  & 15             & 13.33              &  &  & \textbf{10,677}  &  &  & 11,226              &  &  & 14,115      \\
c-fat200-2       & 200   & 3,235     & 24          & 26,389     &  & 24             & 8.33               &  &  & 36,326           &  &  & \textbf{31,203}     &  &  & 37,990      \\
c-fat200-5       & 200   & 8,473     & 58          & 168,200    &  & 84             & 2.38               &  &  & \textbf{304,982} &  &  & 347,871             &  &  & 363,583     \\
c-fat500-1       & 500   & 4,459     & 14          & 10,738     &  & 14             & 35.71              &  &  & 14,422           &  &  & \textbf{11,982}     &  &  & 14,441      \\
c-fat500-2       & 500   & 9,139     & 26          & 38,350     &  & 26             & 19.23              &  &  & 50,942           &  &  & \textbf{40,522}     &  &  & 51,108      \\
c-fat500-5       & 500   & 23,191    & 64          & 205,864    &  & 64             & 7.81               &  &  & 292,208          &  &  & \textbf{224,323}    &  &  & 289,644     \\
c-fat500-10      & 500   & 46,627    & 126         & 804,000    &  & 126            & 3.97               &  &  & 1,174,240        &  &  & \textbf{846,003}    &  &  & 1,122,240   \\[1.2ex]
DSJC500\_5       & 500   & 62,624    & 13          & 9,626      &  & 71             & 7.04               &  &  & 760,032          &  &  & \textbf{371,513}    &  &  & 445,446     \\
DSJC1000\_5      & 1,000 & 249,826   & 15          & 12,054     &  & 122            & 8.20               &  &  & 2,695,560        &  &  & \textbf{1,144,330}  &  &  & 1,321,400   \\[1.2ex]
gen200\_p0.9\_44 & 200   & 17,910    & 44          & 94,362     &  & 63             & 3.17               &  &  & 441,099          &  &  & \textbf{262,929}    &  &  & 327,415     \\
gen200\_p0.9\_55 & 200   & 17,910    & 55          & 150,839    &  & 77             & 2.60               &  &  & 590,043          &  &  & \textbf{387,808}    &  &  & 467,936     \\
gen400\_p0.9\_55 & 400   & 71,820    & 55          & --         &  & 77             & 5.19               &  &  & 1,107,870        &  &  & \textbf{458,318}    &  &  & 528,254     \\
gen400\_p0.9\_65 & 400   & 71,820    & 65          & --         &  & 102            & 3.92               &  &  & 1,528,460        &  &  & \textbf{780,508}    &  &  & 894,911     \\
gen400\_p0.9\_75 & 400   & 71,820    & 75          & --         &  & 103            & 3.88               &  &  & 1,521,530        &  &  & \textbf{784,813}    &  &  & 910,735     \\[1.2ex]
hamming6-2       & 64    & 1,824     & 32          & 32,736     &  & 32             & 2.00               &  &  & 60,192           &  &  & \textbf{33,152}     &  &  & 47,120      \\
hamming6-4       & 64    & 704       & 4           & 396        &  & 7              & 9.14               &  &  & 4,264            &  &  & \textbf{1,907}      &  &  & 2,145       \\
hamming8-2       & 256   & 31,616    & 128         & 800,624    &  & 130            & 1.97               &  &  & 1,558,810        &  &  & \textbf{945,680}    &  &  & 1,277,780   \\
hamming8-4       & 256   & 20,864    & 16          & 12,360     &  & 24             & 10.67              &  &  & 146,767          &  &  & \textbf{42,399}     &  &  & 53,533      \\
hamming10-2      & 1,024 & 518,656   & 512         & --         &  & 537            & 1.91               &  &  & 26,113,600       &  &  & \textbf{15,730,300} &  &  & 21,388,200  \\
hamming10-4      & 1,024 & 434,176   & --          & --         &  & 90             & 11.38              &  &  & 2,798,370        &  &  & \textbf{613,716}    &  &  & 767,997     \\[1.2ex]
johnson8-2-4     & 28    & 210       & 4           & 192        &  & 6              & 4.67               &  &  & 1,079            &  &  & \textbf{470}        &  &  & 678         \\
johnson8-4-4     & 70    & 1,855     & 14          & 6,552      &  & 19             & 3.68               &  &  & 29,569           &  &  & \textbf{15,934}     &  &  & 19,786      \\
johnson16-2-4    & 120   & 5,460     & 8           & 3,808      &  & 15             & 8.00               &  &  & 51,783           &  &  & \textbf{14,375}     &  &  & 20,263      \\
johnson32-2-4    & 496   & 107,880   & 16          & --         &  & 31             & 16.00              &  &  & 454,263          &  &  & \textbf{79,785}     &  &  & 90,891      \\[1.2ex]
keller4          & 171   & 9,435     & 11          & 6,745      &  & 33             & 5.18               &  &  & 143,426          &  &  & \textbf{71,782}     &  &  & 96,508      \\
keller5          & 776   & 225,990   & 27          & --         &  & 123            & 6.31               &  &  & 2,693,370        &  &  & \textbf{1,058,210}  &  &  & 1,381,980   \\
keller6          & 3,361 & 4,619,898 & --          & --         &  & 282            & 11.92              &  &  & 24,531,900       &  &  & \textbf{6,110,990}  &  &  & 7,601,260   \\[1.2ex]
MANN\_a9         & 45    & 918       & 16          & 5,460      &  & 21             & 2.14               &  &  & 15,019           &  &  & \textbf{8,546}      &  &  & 14,668      \\
MANN\_a27        & 378   & 70,551    & 126         & --         &  & 144            & 2.63               &  &  & 2,390,210        &  &  & \textbf{1,100,410}  &  &  & 1,696,000   \\
MANN\_a45        & 1,035 & 533,115   & 345         & --         &  & 375            & 2.76               &  &  & 17,904,100       &  &  & \textbf{7,256,760}  &  &  & 11,639,900  \\
MANN\_a81        & 3,321 & 5,506,380 & 1100        & --         &  & 1160           & 2.86               &  &  & 184,688,000      &  &  & \textbf{69,376,700} &  &  & 111,821,000 \\[1.2ex]
p\_hat300-1      & 300   & 10,933    & 8           & 3,321      &  & 24             & 12.50              &  &  & 77,256           &  &  & \textbf{42,970}     &  &  & 50,724      \\
p\_hat300-2      & 300   & 21,928    & 25          & 31,564     &  & 46             & 6.52               &  &  & 291,593          &  &  & \textbf{160,414}    &  &  & 187,784     \\
p\_hat300-3      & 300   & 33,390    & 36          & 63,390     &  & 72             & 4.17               &  &  & 719,292          &  &  & \textbf{386,513}    &  &  & 452,424     \\
p\_hat500-1      & 500   & 31,569    & 9           & 4,764      &  & 36             & 13.89              &  &  & 200,341          &  &  & \textbf{97,409}     &  &  & 116,933     \\
p\_hat500-2      & 500   & 62,946    & 36          & 63,870     &  & 69             & 7.25               &  &  & 797,997          &  &  & \textbf{385,609}    &  &  & 430,757     \\
p\_hat500-3      & 500   & 93,800    & 50          & --         &  & 112            & 4.46               &  &  & 1,901,550        &  &  & \textbf{950,409}    &  &  & 1,102,070   \\
p\_hat700-1      & 700   & 60,999    & 11          & 5,185      &  & 44             & 15.91              &  &  & 344,530          &  &  & \textbf{154,795}    &  &  & 176,541     \\
p\_hat700-2      & 700   & 121,728   & 44          & --         &  & 89             & 7.87               &  &  & 1,399,730        &  &  & \textbf{655,765}    &  &  & 721,643     \\
p\_hat700-3      & 700   & 183,010   & 62          & --         &  & 148            & 4.73               &  &  & 3,427,090        &  &  & \textbf{1,675,380}  &  &  & 1,932,600   \\
p\_hat1000-1     & 1,000 & 122,253   & 10          & 5,436      &  & 56             & 17.86              &  &  & 637,294          &  &  & \textbf{260,576}    &  &  & 287,901     \\
p\_hat1000-2     & 1,000 & 244,799   & 46          & --         &  & 116            & 8.62               &  &  & 2,607,780        &  &  & \textbf{1,100,310}  &  &  & 1,231,560   \\
p\_hat1000-3     & 1,000 & 371,746   & 68          & --         &  & 198            & 5.05               &  &  & 6,770,190        &  &  & \textbf{3,042,040}  &  &  & 3,459,520   \\
p\_hat1500-1     & 1,500 & 284,923   & 12          & 7,135      &  & 78             & 19.23              &  &  & 1,377,910        &  &  & \textbf{510,199}    &  &  & 564,782     \\
p\_hat1500-2     & 1,500 & 568,960   & 65          & --         &  & 169            & 8.88               &  &  & 5,751,650        &  &  & \textbf{2,358,580}  &  &  & 2,633,230   \\
p\_hat1500-3     & 1,500 & 847,244   & 94          & --         &  & 284            & 5.28               &  &  & 14,909,000       &  &  & \textbf{6,432,260}  &  &  & 7,197,470   \\[1.2ex]
san200\_0.7\_1   & 200   & 13,930    & 30          & 45,295     &  & 31             & 6.45               &  &  & 191,976          &  &  & \textbf{76,677}     &  &  & 84,637      \\
san200\_0.7\_2   & 200   & 13,930    & 18          & 15,073     &  & 18             & 11.11              &  &  & 122,916          &  &  & \textbf{28,141}     &  &  & 29,265      \\
san200\_0.9\_1   & 200   & 17,910    & 70          & 242,710    &  & 74             & 2.70               &  &  & 582,448          &  &  & \textbf{367,724}    &  &  & 436,794     \\
san200\_0.9\_2   & 200   & 17,910    & 60          & 178,468    &  & 75             & 2.67               &  &  & 578,346          &  &  & \textbf{370,206}    &  &  & 446,774     \\
san200\_0.9\_3   & 200   & 17,910    & 44          & 96,764     &  & 56             & 3.57               &  &  & 430,170          &  &  & \textbf{229,653}    &  &  & 263,921     \\
san400\_0.5\_1   & 400   & 39,900    & 13          & 7,442      &  & 14             & 28.57              &  &  & 129,393          &  &  & \textbf{17,229}     &  &  & 17,855      \\
san400\_0.7\_1   & 400   & 55,860    & 40          & 77,719     &  & 41             & 9.76               &  &  & 544,666          &  &  & \textbf{143,893}    &  &  & 154,432     \\
san400\_0.7\_2   & 400   & 55,860    & 30          & 44,155     &  & 30             & 13.33              &  &  & 413,269          &  &  & \textbf{80,724}     &  &  & 83,551      \\
san400\_0.7\_3   & 400   & 55,860    & 22          & 24,727     &  & 29             & 13.79              &  &  & 338,061          &  &  & \textbf{74,233}     &  &  & 78,142      \\
san400\_0.9\_1   & 400   & 71,820    & 100         & --         &  & 107            & 3.74               &  &  & 1,766,750        &  &  & \textbf{876,509}    &  &  & 976,900     \\
san1000          & 1,000 & 250,500   & 15          & 10,661     &  & 16             & 62.50              &  &  & 376,843          &  &  & \textbf{23,456}     &  &  & 23,840      \\[1.2ex]
sanr200\_0.7     & 200   & 13,868    & 18          & 16,398     &  & 51             & 3.92               &  &  & 298,303          &  &  & \textbf{175,001}    &  &  & 214,149     \\
sanr200\_0.9     & 200   & 17,863    & 42          & --         &  & 81             & 2.47               &  &  & 632,063          &  &  & \textbf{433,622}    &  &  & 509,992     \\
sanr400\_0.5     & 400   & 39,984    & 13          & 8,298      &  & 61             & 6.56               &  &  & 507,447          &  &  & \textbf{259,181}    &  &  & 319,893     \\
sanr400\_0.7     & 400   & 55,869    & 21          & 22,791     &  & 90             & 4.44               &  &  & 1,082,260        &  &  & \textbf{577,693}    &  &  & 686,163     \\[1ex]
\bottomrule
\end{longtable}
\normalsize

In Table~\ref{table:results_families_dimacs}, the DIMACS instances are instead grouped into families based on the similarity of their names. For each family, the table reports its name and the number of instances it contains (column $\#$).
The next columns provide the minimum, maximum, and average \textit{relative ratio} obtained by $\UB_1$ across the instances in each family, computed for each instance and each upper bound as the ratio between its value and that of the bound achieving the minimum value (i.e., $\min \, \{\UB_1, \, \UB_2, \, \UB_3\}$) on the same instance.These are followed by the corresponding statistics for~$\UB_3$. The value of $\UB_2$ is consistently lower than those of $\UB_1$ and $\UB_3$ across the entire testbed of instances. On the sole {\tt c-fat200-1} and {\tt c-fat200-5} instances, $\UB_2$ has a value greater than that of $\UB_1$. For this reason, \Cref{table:results_families_dimacs} does not contain columns regarding the relative ratio achieved by $\UB_2$, which is equal to $1$ for all instances but two. Specifically, on the {\tt c-fat200-1} instance, $\UB_2$ achieves a relative ratio of $1.05$, while on the {\tt c-fat200-5} instance it equals $1.14$.

\begin{table}[h]
\footnotesize
\centering
\renewcommand
\arraystretch{1.2}
\tabcolsep=8.0pt
\caption{Relative ratio achieved by $\UB_1$ and $\UB_3$ on DIMACS instances, grouped by family. For each bound and family: minimum, maximum, and average relative ratio (ratio between the bound value and $\min \, \{\UB_1, \, \UB_2, \, \UB_3\}$). A DSatur coloring $\mathscr{C}$ is used for computing $\UB_1$ and $\UB_2$. For $\UB_2$, the independent sets of $\mathscr{C}$ are sorted in non-decreasing order of cardinality. $\UB_3$ uses $k = |\mathscr{C}|$. Since $\UB_2$ achieves a relative ratio of $1.00$ on all instances but two ({\tt c-fat200-1} and {\tt c-fat200-5}), its data are omitted from the table.}
\label{table:results_families_dimacs}
\begin{tabular}{Trrrrrrrrrrr}
\toprule
\multicolumn{2}{l}{}              &    &  &  & \multicolumn{3}{r}{relative ratio of $\UB_1$} &  & \multicolumn{3}{r}{relative ratio of $\UB_3$} \\ \cline{1-3} \cline{6-8} \cline{10-12} 
\rmfamily{Families of instances} &  & \# &  &  & min        & max        & avg      &  & min        & max       & avg       \\ \cline{1-3} \cline{6-8} \cline{10-12} 
                               &  &    &  &  &            &            &          &  &            &           &           \\[-2.5ex]
brock                          &  & 12 &  &  & 1.67       & 2.16      & 1.89     &  & 1.16       & 1.26      & 1.18     \\
C                              &  & 7  &  &  & 1.45       & 2.79      & 1.93     &  & 1.13       & 1.22      & 1.17     \\
c-fat                          &  & 7  &  &  & 1.00       & 1.39      & 1.19     &  & 1.19       & 1.33      & 1.26     \\
DSJC                           &  & 2  &  &  & 2.05       & 2.36      & 2.20     &  & 1.15       & 1.20      & 1.18     \\
gen                            &  & 5  &  &  & 1.52       & 2.42      & 1.90     &  & 1.15       & 1.25      & 1.18     \\
hamming                        &  & 6  &  &  & 1.65       & 4.56      & 2.56     &  & 1.12       & 1.42      & 1.30     \\
johnson                        &  & 4  &  &  & 1.86       & 5.69      & 3.36     &  & 1.14       & 1.44      & 1.31     \\
keller                         &  & 3  &  &  & 2.00       & 4.01      & 2.85     &  & 1.24       & 1.34      & 1.30     \\
MANN                           &  & 4  &  &  & 1.76       & 2.66      & 2.26     &  & 1.54       & 1.72      & 1.62     \\
p\_hat                         &  & 15 &  &  & 1.80       & 2.70      & 2.17     &  & 1.10       & 1.20      & 1.14     \\
san                            &  & 11 &  &  & 1.56       & 16.07     & 4.63     &  & 1.02       & 1.21      & 1.09     \\
sanr                           &  & 4  &  &  & 1.46       & 1.96      & 1.75     &  & 1.18       & 1.23      & 1.21  \\[1.2ex] \cline{1-3} \cline{6-8} \cline{10-12} 
\rmfamily{Tot/Avg}  &  & 80 &  &  &            &            & 2.44     &  &            &           & 1.21      \\ \bottomrule
\end{tabular}
\end{table}

The results reported in \Cref{table:results_families_dimacs} provide insights on the relative strength of the bounds: it emerges, for example, that $\UB_1$ is the upper bound which tends to have the worst performance. Indeed, its average relative ratio is more than $2.4$, and for some families of instances (like {\tt johnson} and {\tt san}) it even reaches values higher than $3$. On the other hand, $\UB_3$ seems to ensure a more stable practical performance, being on average $1.2$ times larger than the tightest upper bound among the three.

It is also worth noting that computing~$\UB_1$ is considerably more time-consuming than~$\UB_2$ and $\UB_3$, as it requires solving Model~\eqref{form:LP-italia-spagna}. For instances with many edges, and consequently a large number of variables, solving the model can take several tens of seconds and, in some cases, even exceed a few hundred seconds. In contrast, the evaluation of $\UB_2$ and $\UB_3$ is extremely fast and incurs negligible computational time.

In order to assess the robustness of the upper bounds to the specific coloring used, we also tested each instance with five additional colorings computed via a randomized algorithm. This algorithm first generates a random ordering of the vertices and then assigns each vertex, in that order, to the first available color that does not contain any of its neighbors. We refer to a coloring obtained via this randomized coloring algorithm as \textit{random coloring}. Results show that all the upper bounds generally exhibit weaker performance when computed using random colorings, compared to those obtained with the DSatur coloring. This is likely due to DSatur's tendency to produce colorings with fewer independent sets. The value of $\UB_3$ is indeed directly related to the value of $k$, which in our tests coincides with the cardinality of the coloring. As far as the other two bounds are concerned, our computational experiments show that, as the cardinality of coloring increases, the values of both $\UB_1$ and $\UB_2$ tend to increase accordingly.

\subsection{Results for RANDOM instances}
\label{sec:random_tests}

In this section, we present the results of the tests conducted on the RANDOM instances. Throughout this section, all bounds are computed using a DSatur coloring $\mathscr{C}$.
Specifically, $\UB_1$ and $\UB_2$ use $\mathscr{C}$ directly, with the independent sets of $\mathscr{C}$ sorted in non-decreasing order of cardinality for $\UB_2$; $\UB_3$ uses instead $k = |\mathscr{C}|$. 

To compare the quality of the three bounds independently of the absolute weight scale of each instance, we measure their strength via the \emph{strength ratio}, defined as the ratio between the bound value and the optimal value of the instance. A strength ratio of 1 indicates a bound whose value coincide with the optimal value of the instance, while values greater than 1 indicate overestimation: a bound with a lower strength ratio is therefore stronger.
Figure~\ref{fig:boxplot_comparison_ratio_opt_nodes} displays the strength ratio of the three bounds across different instance sizes, ranging from 10 to 90 vertices, using a box plot representation. Each box summarizes the distribution of the strength ratio achieved by one of the bounds for all instances of the same size, showing the median, interquartile range, and potential outliers. Specifically, each box plot is based on 90 instances with densities ranging from 10\% to 90\%, resulting in a total of 90 upper bound evaluations.
The results reveal that the median strength ratio for all the bounds is significantly large even for relatively small instances (i.e., for $|V| \geq 30$), confirming the general weakness of these upper bounds, as previously observed on the DIMACS benchmarks. On these instances, the average strength ratio is $3.5$ for~$\UB_1$, $2.7$ for~$\UB_2$, and $3.2$ for $\UB_3$, with extreme cases exceeding $8$ for $\UB_1$ and $\UB_3$, while the maximum ratio achieved by $\UB_2$ is around $6.6$. 
Moreover, the strength ratio shows a clear upward trend with increasing graph size, highlighting the growing difficulty of obtaining tight upper bounds as~$|V|$ increases. Similarly, the variability of the ratio values increases for larger instances, as indicated by the growing interquartile range. This suggests that the performance of the upper bounds, in terms of deviation from~$\optimum$, becomes less consistent as the graph size grows.

\begin{figure}[H]
\centering
\caption{Box plot representation of the strength ratio of the three upper bounds on RANDOM instances, grouped by number of vertices from 10 to 90. Box plots for~$\UB_1$ are filled with a blue diagonal line pattern, those for~$\UB_2$ with a red dotted pattern, and those for $\UB_3$ with a green crossed-line pattern. A DSatur coloring $\mathscr{C}$ is used for computing $\UB_1$ and $\UB_2$. For $\UB_2$, the independent sets of $\mathscr{C}$ are sorted in non-decreasing order of cardinality. $\UB_3$ uses $k = |\mathscr{C}|$.}
\label{fig:boxplot_comparison_ratio_opt_nodes}
    \includegraphics[width=0.85\linewidth]{./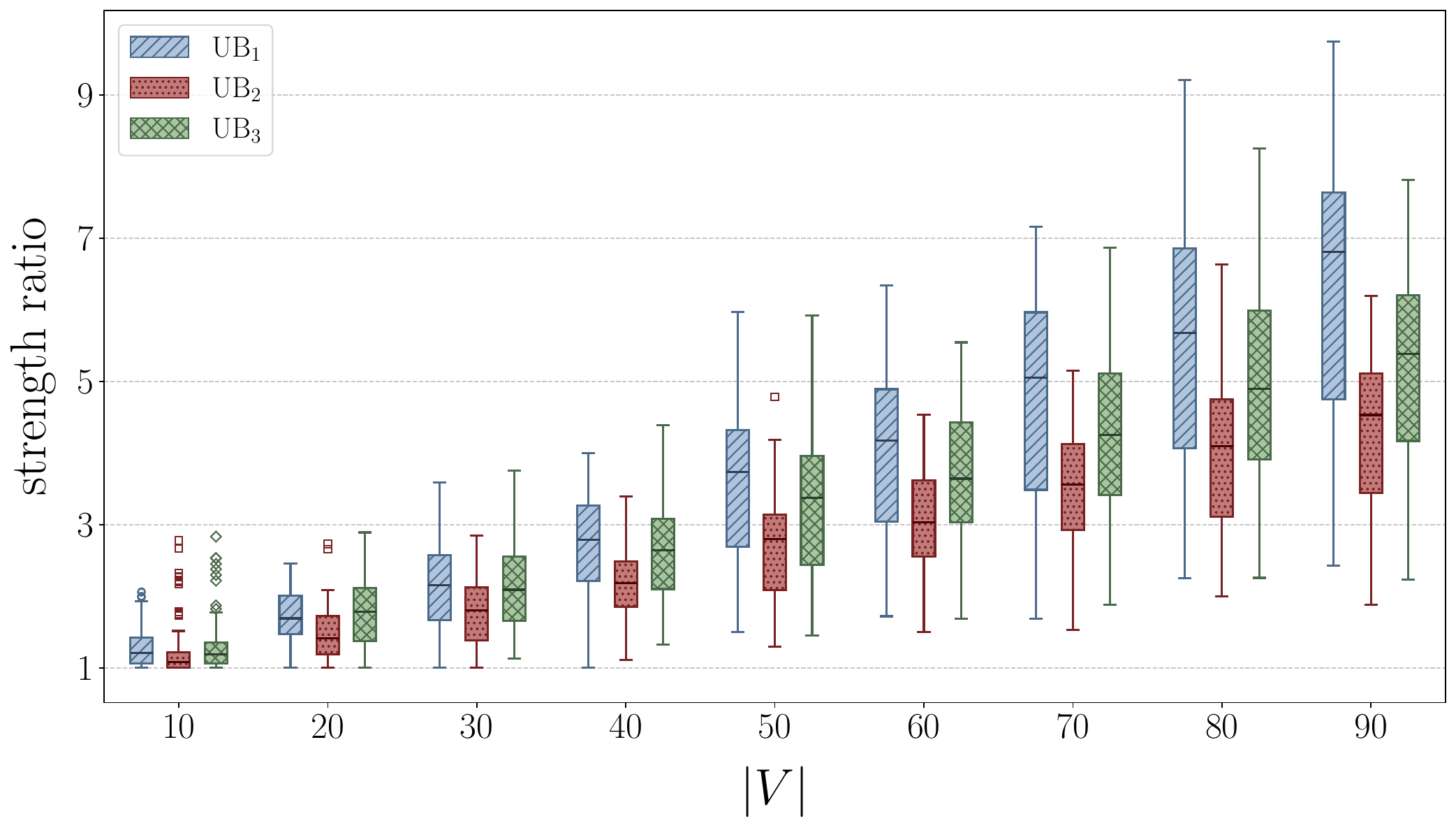}
\end{figure}

Figure~\ref{fig:comp_opt_ratio_density_100} illustrates the influence of graph density on the strength ratio of the three upper bounds, focusing specifically on instances with $|V| = 100$, using a box plot representation. 
Hence, each group consists of $10$ instances. Subfigure~\subref{subfig:comp_opt_ratio_density_1} shows results for densities from 10\% to 90\%, while Subfigure~\subref{subfig:comp_opt_ratio_density_2} focuses on high-density values from 91\% to 99\%.
The results show that both upper bounds perform worst at low to moderate densities (below 70\%). Conversely, for denser graphs, a monotonic decrease in the median strength ratio is observed, with the lowest values—around $1.02$—attained for highly dense instances ($\mu = 0.99$).
As the box plots reveal, all the upper bounds exhibit weak performance across most density levels for the instances with $|V|=100$, with considerable median strength ratios for nearly all density values. Notably, sparse graphs appear to be more challenging than dense ones: for 10\%-dense instances, both $\UB_1$ and $\UB_2$ have a median ratio of more than $3$, while that of $\UB_3$ is approximately $4,6$.
Finally, and in line with the results observed for the DIMACS instances, the box plots confirm that~$\UB_2$ consistently outperforms~$\UB_1$ and $\UB_3$ in terms of strength ratio, regardless of the edge density or size of the graph.

Similarly to what has been done for DIMACS instances, we have also tested each RANDOM instance using five random colorings, in addition to the DSatur one. Results confirm that, on average, using DSatur as a coloring algorithm guarantees a better performance than using random colorings, for all the three upper bounds. Indeed, the latter tend to produce colorings with an independent set more than those obtained via DSatur, on average. The average advantage of DSatur is moderate for $\UB_1$, but grows substantially for $\UB_2$ and for $\UB_3$, and it is more pronounced on high density instances.

\begin{figure}[h!]
\caption{Box plot representation of the strength ratio of the three upper bounds on RANDOM instances with $|V| = 100$, grouped by edge density. Box plots for~$\UB_1$ are filled with a blue diagonal line pattern, those for~$\UB_2$ with a red dotted pattern, and those for $\UB_3$ with a green crossed-line pattern. A DSatur coloring $\mathscr{C}$ is used for computing $\UB_1$ and $\UB_2$. For $\UB_2$, the independent sets of $\mathscr{C}$ are sorted in non-decreasing order of cardinality. $\UB_3$ uses $k = |\mathscr{C}|$.}

\label{fig:comp_opt_ratio_density_100}
\medskip
\begin{subfigure}[b]{\textwidth}
    \centering
    \includegraphics[width=0.8\linewidth]{./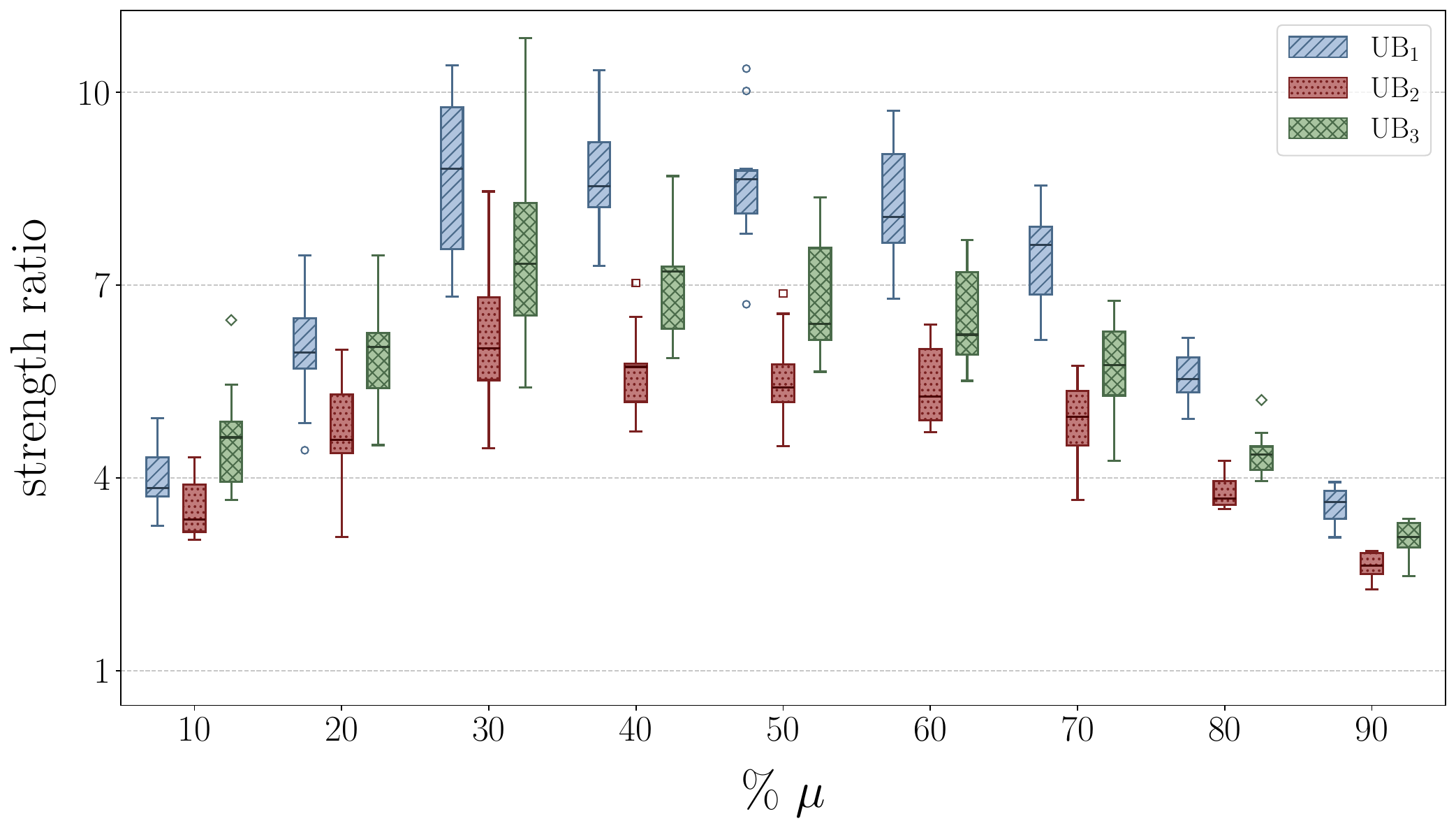}
    \subcaption{}
    \label{subfig:comp_opt_ratio_density_1}
\end{subfigure}
\medskip
\begin{subfigure}[b]{\textwidth}
    \centering
    \includegraphics[width=0.8\linewidth]{./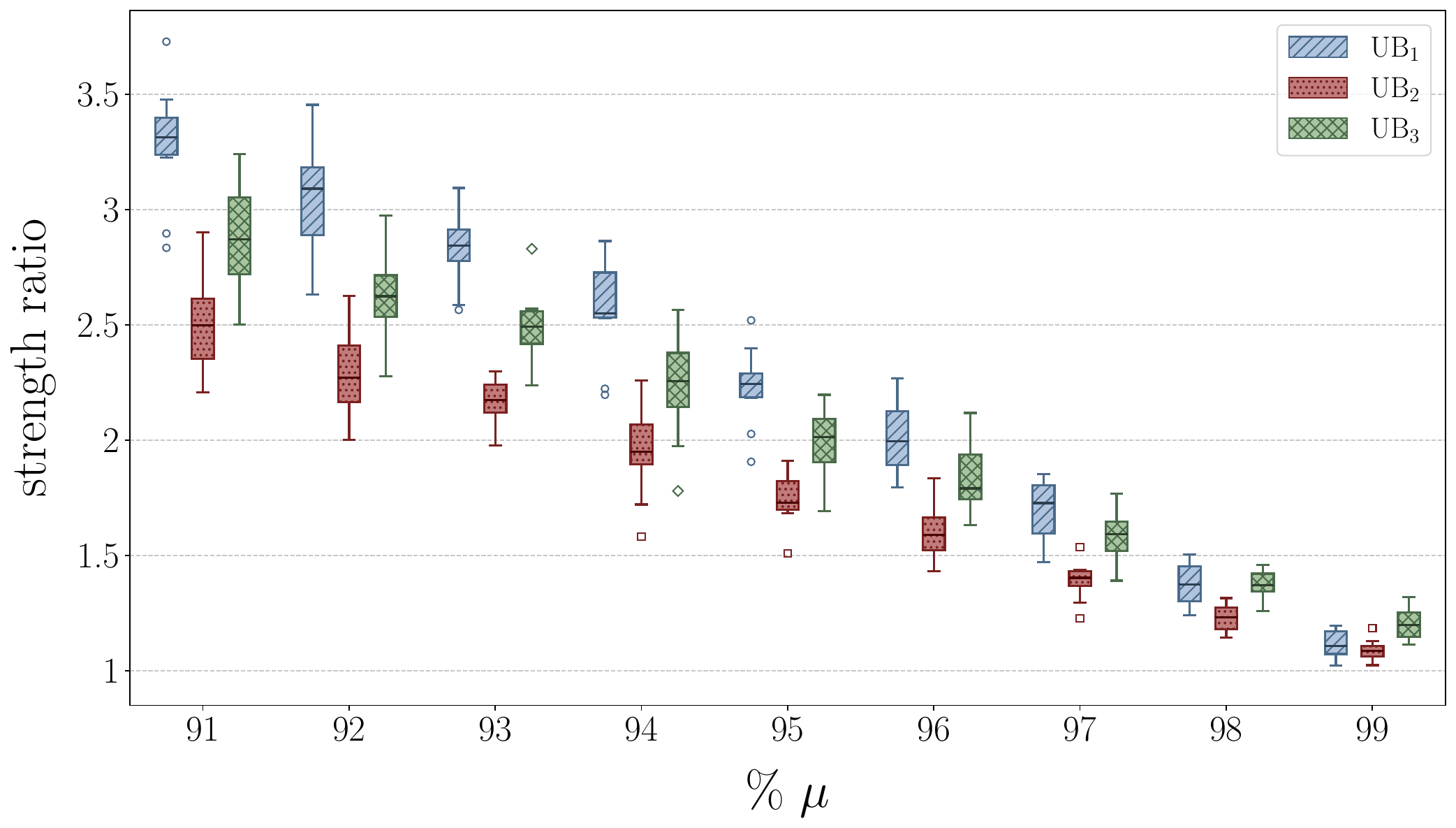}
    \subcaption{}
    \label{subfig:comp_opt_ratio_density_2}
\end{subfigure}
\end{figure}

\subsection{Strength of the upper bounds in a branch-and-bound context} \label{sec:branching_study}

The results shown in the previous sections have been obtained computing each upper bound on the full original graph $G$. However, in the state-of-the-art branch-and-bound algorithms for the EWMCP, the bounds are computed multiple times: at each node of the search tree, a partial clique $C \subset V$ has already been fixed in a partial solution, and the upper bound is computed only on the subgraph $G[L]$ induced by the remaining candidate vertices $L \subset V$, namely the vertices adjacent to all those in $C$. The total upper bound at that node is then $W(C) + \UB(C, L)$, where $W(C)$ denotes the total edge-weight of the current clique, and $\UB(C, L)$ is an upper bound on the maximum contribution that the vertices of $L$ can give to the current partial solution value. The idea for computing $\UB(C,L)$ is common to all the bounds studied in this article: each candidate vertex $v \in L$ is adjacent, by definition, to every vertex currently in $C$, so it \textit{absorbs} the weights of the edges connecting it to $C$. Formally, for every $v \in L$, define $\vartheta_v := \sum_{u \in C} c_{\{u,v\}}$ as the total weight that $v$ would contribute to the clique $C$ if added to it. The total upper bound at the current node then decomposes as $W(C) + \UB(C,L)$, where $\UB(C,L)$ is computed on $G[L]$ with $\vartheta_v$ playing the role of an additional offset added to each vertex $v \in L$. For $\UB_2$ and $\UB_3$, this offset is incorporated directly into the vertex weights $\sigma_v$ and $\eta_v$, as described in \citet{shimizu2020maximum} and \citet{hosseinian2020lagrangian}, respectively. For $\UB_1$, it is instead taken into account by increasing the right-hand side of Constraints~\eqref{con:LP-italia-spagna_max-w} by $\vartheta_v$ for vertices $v \in L$, and solving the LP on the subgraph $G[L]$ with this modified formulation.
To assess how quickly each bound is able to prune nodes in practice, we simulate a single random path through the branching tree. The simulation proceeds as follows. Starting from an empty clique $C = \emptyset$ and candidate set $L = V$, at each step a random vertex $v$ is selected from $L$ and added to $C$; then $L$ is updated to $L \cap N(v)$, where $N(v)$ denotes the neighborhood of $v$, thus retaining only the vertices adjacent to all current members of $C$. After each update, $G[L]$ is re-colored using the DSatur heuristic. the independent sets of the resulting coloring are sorted in non-decreasing  order of cardinality; for $\UB_3$, we set $k = |\mathscr{C}|$, where $\mathscr{C}$ denotes the coloring computed at the current node. The simulation terminates either when $W(C) + \mathrm{UB}(C,L) \leq \optimum$ (the node is pruned by bound) or when $L$ becomes empty (a maximal clique has been reached without pruning).  We refer to the number of vertices in $C$ at termination as the \emph{pruning depth}. Observe that we use the optimal EWMCP value $\optimum$ as the incumbent value throughout the simulation. This constitutes an idealized setting: in a real branch-and-bound algorithm, the incumbent is a lower bound to $\optimum$, and its value may significantly affect the depth reached before being able to prune a node.
The simulation is run on the 48 DIMACS instances for which $\optimum$ is known, exploring 10 independent random branchings per instance, yielding 480 runs per bound. 

Table~\ref{table:branching_perinstance} reports the minimum, maximum and average pruning depth over the 10 seeds for each instance and each upper bound. $\UB_2$ achieves the lowest average depth on 42 of the 48 instances. Such dominance is confirmed by the average depth values: $\UB_2$ prunes at a shallower depth ($4.1$ on average) than both $\UB_1$ and $\UB_3$ ($4.9$ and $4.8$, respectively). For some instances, the performance gap between $\UB_2$ and the other two bounds is particularly relevant: as an example, for the $\tt hamming8-2$ instance the average pruning depth obtained using $\UB_2$ as an upper bound is approximately half of those obtained  with $\UB_1$ and $\UB_3$. Conversely, on all the instances belonging to the {\tt c-fat} family, $\UB_1$ proves to be the strongest upper bound: indeed, when using $\UB_1$ as an upper bound, pruning occurs at depth $1$ in all the branching simulations performed on these instances.

\footnotesize
\renewcommand\arraystretch{1}
\tabcolsep=2.5pt
\begin{longtable}[c]{Trrrrrrrrrrrrr}
\caption{
Minimum, maximum, and average pruning depth over 10 simulated branchings,
for each of the 48 DIMACS instances with known optimum. At each node of the 
simulation, all bounds are recomputed on the current subgraph using a DSatur 
coloring $\mathscr{C}$. For $\UB_2$, the independent sets of each coloring are sorted in non-decreasing order of cardinality. $\UB_3$ uses $k = |\mathscr{C}|$.}
\label{table:branching_perinstance}\\
\toprule
                                 &  &  & \multicolumn{3}{r}{pruning depth with $\UB_1$} &  & \multicolumn{3}{r}{pruning depth with $\UB_2$} &  & \multicolumn{3}{r}{pruning depth with $\UB_3$} \\ \cline{4-6} \cline{8-10} \cline{12-14} 
\rmfamily{Instance} &  &  & min  & max  & avg          &  & min  & max & avg           &  & min  & max  & avg          \\
\cline{1-1} \cline{4-6} \cline{8-10} \cline{12-14} 
\endfirsthead
\endhead
\endfoot
\endlastfoot
&  &  &      &      &              &  &      &     &               &  &      &      &              \\[-1.5ex]
brock200\_1      &  &  & 5          & 8   & 6.8          &  & 5          & 7   & \textbf{6.4}  &  & 5    & 8    & 7.0          \\
brock200\_2      &  &  & 3          & 4   & 3.4          &  & 3          & 4   & \textbf{3.2}  &  & 3    & 4    & 3.4          \\
brock200\_3      &  &  & 4          & 5   & 4.7          &  & 4          & 5   & \textbf{4.5}  &  & 5    & 6    & 5.1          \\
brock200\_4      &  &  & 4          & 6   & 4.8          &  & 3          & 6   & \textbf{4.7}  &  & 4    & 6    & 4.9          \\
brock400\_3      &  &  & 6          & 7   & 6.9          &  & 6          & 7   & \textbf{6.7}  &  & 6    & 7    & 6.9          \\
brock400\_4      &  &  & 6          & 8   & 7.1          &  & 6          & 7   & \textbf{6.5}  &  & 6    & 8    & 7.0          \\[1.2ex]
C125.9           &  &  & 7          & 11  & 9.3          &  & 6          & 11  & \textbf{8.6}  &  & 7    & 13   & 9.6          \\[1.2ex]
c-fat200-1       &  &  & 1          & 1   & \textbf{1.0} &  & 1          & 1   & \textbf{1.0}  &  & 1    & 4    & 1.4          \\
c-fat200-2       &  &  & 1          & 1   & \textbf{1.0} &  & 1          & 1   & \textbf{1.0}  &  & 1    & 4    & 2.4          \\
c-fat200-5       &  &  & 1          & 1   & \textbf{1.0} &  & 1          & 2   & 1.2           &           & 2    & 4    & 2.2          \\
c-fat500-1       &  &  & 1          & 1   & \textbf{1.0} &  & 1          & 1   & \textbf{1.0}  &  & 1    & 1    & \textbf{1.0} \\
c-fat500-2       &  &  & 1          & 1   & \textbf{1.0} &  & 1          & 3   & 1.2           &           & 1    & 3    & 1.2          \\
c-fat500-5       &  &  & 1          & 1   & \textbf{1.0} &  & 1          & 4   & 1.3           &           & 2    & 5    & 2.7          \\
c-fat500-10      &  &  & 1          & 1   & \textbf{1.0} &  & 1          & 4   & 1.4           &           & 2    & 4    & 2.7          \\[1.2ex]
DSJC500\_5       &  &  & 4          & 5   & 4.7          &  & 4          & 5   & \textbf{4.3}  &  & 4    & 5    & 4.7          \\
DSJC1000\_5      &  &  & 5          & 6   & 5.2          &  & 5          & 5   & \textbf{5.0}  &  & 5    & 6    & 5.1          \\[1.2ex]
gen200\_p0.9\_44 &  &  & 9          & 14  & 11.4         &  & 8          & 13  & \textbf{10.1} &  & 9    & 14   & 11.1         \\
gen200\_p0.9\_55 &  &  & 6          & 8   & 7.3          &  & 5          & 7   & \textbf{6.1}  &  & 6    & 8    & 7.3          \\[1.2ex]
hamming6-2       &  &  & 3          & 6   & 3.9          &  & 2          & 6   & \textbf{2.5}  &  & 2    & 6    & 3.5          \\
hamming6-4       &  &  & 2          & 3   & \textbf{2.3} &  & 2          & 3   & 2.9           &           & 2    & 3    & 2.9          \\
hamming8-2       &  &  & 11         & 16  & 12.1         &  & 5          & 12  & \textbf{6.2}  &  & 10   & 16   & 11.7         \\
hamming8-4       &  &  & 3          & 5   & 3.9          &  & 3          & 5   & \textbf{3.6}  &  & 3    & 5    & 4.0          \\[1.2ex]
johnson8-2-4     &  &  & 2 & 2   & \textbf{2.0} &  & 2          & 3   & 2.6           &           & 2    & 3    & 2.6          \\
johnson8-4-4     &  &  & 3          & 5   & 3.8          &  & 3          & 7   & \textbf{3.5}  &  & 3    & 7    & 4.0          \\
johnson16-2-4    &  &  & 6          & 6   & \textbf{6.0} &  & 6          & 6   & \textbf{6.0}  &  & 6    & 6    & \textbf{6.0} \\[1.2ex]
keller4          &  &  & 4          & 5   & 4.2          &  & 3 & 4   & \textbf{3.7}  &  & 3    & 5    & 4.0          \\[1.2ex]
MANN\_a9         &  &  & 9          & 12  & 10.1         &  & 5          & 12  & \textbf{8.7}  &  & 9    & 15   & 11.6         \\[1.2ex]
p\_hat300-1      &  &  & 2          & 3   & \textbf{2.3} &  & 2          & 3   & \textbf{2.3}  &  & 2    & 4    & 2.5          \\
p\_hat300-2      &  &  & 2          & 7   & 4.3          &  & 2          & 7   & \textbf{3.7}  &  & 2    & 8    & 4.3          \\
p\_hat300-3      &  &  & 5          & 9   & 6.4          &  & 4          & 7   & \textbf{5.5}  &  & 5    & 7    & 6.1          \\
p\_hat500-1      &  &  & 2          & 4   & 2.5          &  & 2          & 3   & \textbf{2.4}  &  & 2    & 4    & 2.5          \\
p\_hat500-2      &  &  & 2          & 8   & 4.3          &  & 2          & 6   & \textbf{3.7}  &  & 2    & 8    & 4.1          \\
p\_hat700-1      &  &  & 2          & 4   & \textbf{3.1} &  & 2          & 4   & \textbf{3.1}  &  & 3    & 4    & 3.3          \\
p\_hat1000-1     &  &  & 3          & 4   & \textbf{3.2} &  & 3          & 4   & \textbf{3.2}  &  & 3    & 4    & 3.3          \\
p\_hat1500-1     &  &  & 3          & 4   & 3.3          &  & 3          & 4   & \textbf{3.2}  &  & 3    & 4    & 3.5          \\[1.2ex]
san200\_0.7\_1   &  &  & 3          & 5   & 3.5          &  & 2          & 3   & \textbf{2.6}  &  & 3    & 4    & 3.2          \\
san200\_0.7\_2   &  &  & 3          & 8   & 6.5          &  & 2          & 5   & \textbf{3.8}  &  & 3    & 6    & 4.2          \\
san200\_0.9\_1   &  &  & 4          & 8   & 5.9          &  & 3          & 5   & \textbf{4.0}  &  & 4    & 7    & 5.5          \\
san200\_0.9\_2   &  &  & 6          & 8   & 7.1          &  & 5          & 6   & \textbf{5.7}  &  & 6    & 8    & 6.7          \\
san200\_0.9\_3   &  &  & 9          & 11  & 10.3         &  & 7          & 10  & \textbf{8.7}  &  & 9    & 11   & 10.2         \\
san400\_0.5\_1   &  &  & 3          & 5   & 4.0          &  & 2          & 2   & \textbf{2.0}  &  & 2    & 3    & 2.3          \\
san400\_0.7\_1   &  &  & 3          & 9   & 5.5          &  & 3          & 3   & \textbf{3.0}  &  & 3    & 4    & 3.9          \\
san400\_0.7\_2   &  &  & 5          & 8   & 6.3          &  & 3          & 5   & \textbf{3.8}  &  & 3    & 5    & 4.7          \\
san400\_0.7\_3   &  &  & 6          & 8   & 6.8          &  & 3          & 6   & \textbf{4.4}  &  & 4    & 7    & 5.1          \\
san1000          &  &  & 5          & 6   & 5.1          &  & 2          & 2   & \textbf{2.0}  &  & 2    & 3    & 2.8          \\[1.2ex]
sanr200\_0.7     &  &  & 5          & 6   & 5.5          &  & 5          & 6   & \textbf{5.3}  &  & 5    & 6    & 5.6          \\
sanr400\_0.5     &  &  & 4          & 5   & \textbf{4.1} &  & 4          & 5   & \textbf{4.1}  &  & 4    & 5    & \textbf{4.1} \\
sanr400\_0.7     &  &  & 6          & 8   & 7.1          &  & 6          & 8   & \textbf{7.0}  &  & 6    & 8    & 7.2          \\[1.2ex]  \cline{1-1} \cline{4-6} \cline{8-10} \cline{12-14} 
\\[-2ex]
\rmfamily{Avg}      &  &  &      &      & 4.9          &  &      &     & \textbf{4.1}           &  &      &      & 4.8          \\ \bottomrule
\end{longtable}

\normalsize
\subsection{A comparative analysis of LP-based upper bounds}\label{sec:LP_comparison}

In this subsection, we compare five LP-based upper bounds to $\optimum$. The first three are variants of the bound $\OSBoundGeneral$, differing in the family $\tilde{\mathscr{I}}$ of independent sets used in
Model~\eqref{form:LP-italia-spagna}. The remaining two are obtained by solving the linear relaxation of two ILP formulations for the EWMCP.

The optimal value $\OSBoundGeneral$ of Model~\eqref{form:LP-italia-spagna} depends on
the family $\tilde{\mathscr{I}}$ of independent sets over which variables $\boldsymbol{\pi}$
are defined. As introduced in \Cref{sec:UB1}, \citet{san2019new} restrict this family to a
coloring $\mathscr{C}$ (i.e., a partition) of the vertex set $V$. It is therefore natural to
ask whether enlarging~$\tilde{\mathscr{I}}$ yields a tighter bound, and at what computational
cost. We consider two extensions of the baseline bound $\UB_1 = \OSBound$, here defined
over a single DSatur coloring. The first, denoted~$\UB_1^{\cup}$, takes $\tilde{\mathscr{I}}$
to be the union of the six colorings (one obtained via DSatur and 5 random colorings) described in \Cref{sec:dimacs_results}, thus allowing vertices to appear in more than one independent set. The second, denoted~$\UB_1^{\star}$, sets $\tilde{\mathscr{I}} = \mathscr{I}$, i.e., the collection of all maximal independent sets of~$G$. In this case, the value of $\OSBall$ coincides with the optimal value of the linear relaxation of Model~\eqref{form:EWMCP_ILP}. The computation of $\OSBall$ requires solving a formulation with an exponential number of variables, which we handle via column generation. The corresponding pricing subproblems are Maximum Weighted Independent Set problems with suitably defined vertex weights, which we solve by finding a Maximum Vertex Weighted Clique on the complement graph of $G$, using the TSM algorithm \citep{Jiang_Li_Liu_Manya_2018}.

We place these variants within a broader landscape of LP-based bounds by introducing two
further bounds derived from solving the linear relaxation of two ILP formulations for the EWMCP. The first formulation is denoted as $\Fone$, and it is the natural ILP formulation of the problem, adapted from the one originally proposed by \citet{park1996extended}. This formulation corresponds to Model \eqref{form:EWMCP_ILP}, where Constraints \eqref{eq:form:EWMCP_ILP:constrIndependents} are defined over the set of all non-edges of $G$, i.e., over $\bar{E} = \bigl\{\{u,v\} : \{u,v\} \notin E\bigr\}$. Notice that the dual of the linear relaxation of F1 coincides with Model \eqref{form:LP-italia-spagna}, having $\tilde{\mathscr{I}} = \bar{E}$.

The second formulation, detailed in \citet{gouveia2015solving}, is denoted as $\Feleven$. It is obtained from ${\Fone}$ by adding the following two classes of valid inequalities to the model:

\begin{equation} \label{eq:valid_ineq_EWMCP}
\sum_{e \in \delta(u)} y_e \le (k - 1) \, x_u, \quad u \in V, \qquad \text{ and } \qquad \sum_{u \in V} x_u \le k,
\end{equation}

where $k$ is any upper bound on the clique number of $G$. The first family of inequalities ensures that an edge can be selected only if its endpoints are both in the clique, and limits the number of selected edges incident to each node to at most $k-1$. The second constraint imposes instead a global upper bound of $k$ on the number of selected vertices. \citet{gouveia2015solving} showed that the linear relaxation of $\Feleven$ is not dominated by that of any other compact ILP formulation for the EWMCP they consider, in the sense that neither provides a tighter bound on every instance.

The upper bounds obtained by solving the linear relaxations of formulations $\Fone$ and $\Feleven$ are henceforth referred to as $\UB_{\Fone}$ and $\UB_{\Feleven}$, respectively. In the following computational experiments, the value of $k$ for Constraints \eqref{eq:valid_ineq_EWMCP} is set to the cardinality of the coloring obtained via the DSatur algorithm, for all the tested instances.

\Cref{table:LP-based_stats} reports the performance of all five upper bounds on 43 DIMACS
instances with known optimal values. Instances are grouped by family; for each family, the table reports the number of instances, the average relative ratio (the ratio of each bound's value to the minimum one across all five bounds on the same instance), and the average time ratio (the ratio of each bound's computing time to the minimum one across all five bounds on the same instance). A value of 1.00 in either of these ratios indicates that the corresponding bound achieves the best value, respectively the shortest computing
time, among the five upper bounds on that instance. Four instances of the \texttt{p\_hat} family, as well as the \texttt{DSJC1000\_5} instance, are excluded from the analysis since the column generation procedure for $\UB_1^\star$ did not converge within a 20-hour time limit.

The results reveal a clear hierarchy. $\UB_{\Fone}$ is the weakest of the five upper bounds by a wide margin: its average relative ratio of 22.62 reflects that, on most instances, it
is far from the tightest among the 5 LP-based upper bounds considered. This is most pronounced on the \texttt{san} family,
where $\UB_{\Fone}$ is on average more than 77 times larger than the best bound, confirming that
restricting $\tilde{\mathscr{I}}$ to pairs is a poor choice in practice. Its computing time
is comparable to that of $\UB_1$ (average time ratio 2.65), so the bound weakness is not offset
by any computational advantage.
Within the $\OSBoundGeneral$ family, enlarging $\tilde{\mathscr{I}}$ yields a diminishing improvement in bound tightness. Moving from $\UB_1$ (average relative ratio 2.31) to $\UB_1^{\cup}$ (2.17) slightly reduces
the gap, at a moderate computational cost of 2.64 times in terms of computing time. Extending to
$\UB_1^{\star}$ (having an average relative ratio of 2.02) achieves a further modest improvement, but with an average time ratio of around 1330 -- dominated by the \texttt{p\_hat} family, where column generation requires on average over 13000 times the minimum computing time. The gain in bound tightness from $\UB_1^{\cup}$ to
$\UB_1^{\star}$ is therefore not justified by the additional computational effort on almost all the instance families.
The bound $\UB_{\Feleven}$ stands apart from all the other LP-based bounds considered above.
With an average relative ratio of 1.02, it is the tightest bound on basically every instance, achieving an average ratio of exactly 1.00 on eight of the twelve families. It reaches this quality at an average time ratio of 10.77, substantially less than $\UB_1^{\star}$ and comparable to $\UB_1^{\cup}$ on most families, with the exception of \texttt{c-fat}, \texttt{hamming}, \texttt{johnson} and \texttt{MANN}, where its time ratio reaches high values. The only family where
$\UB_{\Feleven}$ shows a significant performance gap with respect to the best bound is \texttt{keller}, where it achieves a relative ratio of 1.28, while $\UB_1^{\star}$ reaches 1.00.

We observe that three variants of $\OSBoundGeneral$ ($\UB_1$, $\UB_1^\cup$ and $\UB_1^\star$) differ substantially in the number of independent sets they employ (i.e., on the value of $|\tilde{\mathscr{I}}|$): on the 43 considered instances, $\UB_1$ uses on average 48.3 independent sets, corresponding to a single DSatur coloring.
$\UB_1^{\cup}$ enlarges this pool adding the independent sets from the five random colorings and reaching an average of 314.9 sets (approximately, a 6x increase that is stable across families). $\UB_1^{\star}$, by contrast, requires substantially fewer independent sets: on most families the column generation procedure adds only a moderate number of sets, yielding an average of 133.9, well below $\UB_1^{\cup}$. The main exception is the \texttt{p\_hat}
family, where $\UB_1^{\star}$ reaches an average of 482.5 sets, surpassing $\UB_1^{\cup}$ (369.2), and where the column generation procedure did not terminate within the time limit on four of the eight instances. The high computational cost of $\UB_1^\star$ is therefore not due to a large number of generated columns -- on most instances, the column generation procedure terminates after adding relatively few independent sets -- but rather to the cost of solving the pricing subproblem at each iteration.

Two main conclusions follow from these results. First, the gap between $\UB_1$ and stronger bounds
within the $\OSBoundGeneral$ family seems not to be closed by enlarging $\tilde{\mathscr{I}}$
alone: even $\UB_1^{\star}$, defined over the full independent set family $\mathscr{I}$, retains an average
relative ratio of 2.02 at an impractical computational cost. Second, the valid inequalities
of $\Feleven$ significantly strengthen the associated bound: $\UB_{\Feleven}$ is tighter than
$\UB_1^{\star}$ at a fraction of its computing time. Nevertheless, we observe that even $\UB_{\Feleven}$, the strongest LP-based bound considered here, is dominated by $\UB_2$ on 41 out of 43 instances. On the two exceptions (\texttt{c-fat500-1} and \texttt{hamming6-2}), the difference between the two bounds is
negligible (below 2.5\%). On average, the LP-based bounds examined in this subsection are therefore both slower and weaker than $\UB_2$.

\begin{table}[H]
\centering
\footnotesize
\renewcommand\arraystretch{1.2}
\tabcolsep=3.2pt
\caption{Performance of the five LP-based upper bounds on 43 DIMACS instances with known optimum, grouped by family. $\UB_1$ uses a single DSatur coloring $\mathscr{C}$; $\UB_1^{\cup}$ uses the union of six colorings (one DSatur and 
five random); $\UB_1^{\star}$ uses the family $\mathscr{I}$ of all independent sets of $G$. For $\UB_{\Feleven}$, the parameter $k$ in Constraints~\eqref{eq:valid_ineq_EWMCP} is set to $|\mathscr{C}|$, where $\mathscr{C}$ is the DSatur coloring. For each family, the table reports the number of instances, along with the average relative ratio and the average time ratio of each bound.}
\label{table:LP-based_stats}
\begin{tabular}{Trrrrrrrrrrrrrrrrr}
\hline
\multicolumn{2}{l}{\rmfamily Instances} &  &  & \multicolumn{2}{r}{$\UB_1$} &  & \multicolumn{2}{r}{$\UB_1^{\cup}$} &  & \multicolumn{2}{r}{$\UB_1^{\star}$} &  & \multicolumn{2}{r}{$\UB_{\Fone}$} &  & \multicolumn{2}{r}{$\UB_{\Feleven}$} \\ \cline{5-6} \cline{8-9} \cline{11-12} \cline{14-15} \cline{17-18} 
                 &            &  &  & \multicolumn{2}{r}{ratio}  &  & \multicolumn{2}{r}{ratio}         &  & \multicolumn{2}{r}{ratio}          &  & \multicolumn{2}{r}{ratio}         &  & \multicolumn{2}{r}{ratio}            \\ \cline{1-2} \cline{5-6} \cline{8-9} \cline{11-12} \cline{14-15} \cline{17-18} 
\rmfamily{family}           & \#         &  &  & relative       & time      &  & relative          & time          &  & relative           & time          &  & relative          & time          &  & relative              & time         \\ \cline{1-2} \cline{5-6} \cline{8-9} \cline{11-12} \cline{14-15} \cline{17-18} 
\\[-3.5ex]
brock            & 6          &  &  & 1.51           & 1.00      &  & 1.33              & 3.39          &  & 1.07               & 166.85        &  & 3.96              & 2.10          &  & \textbf{1.01}         & 4.43         \\
C                & 1          &  &  & 1.42           & 1.00      &  & 1.26              & 2.29          &  & 1.21               & 43.89         &  & 1.96              & 2.01          &  & \textbf{1.00}         & 3.40         \\
c-fat            & 7          &  &  & 1.10           & 1.00      &  & 1.08              & 2.65          &  & 1.08               & 30.30         &  & 6.97              & 6.29          &  & \textbf{1.07}         & 20.35        \\
DSJC & *1 & & & 1.80 & 1.08 && 1.63 & 2.37 && 1.20 & 1069.34 && 7.48 & 1.00 && \textbf{1.00} & 2.74 \\
gen              & 2          &  &  & 1.34           & 1.00      &  & 1.19              & 2.80          &  & 1.08               & 41.40         &  & 2.40              & 2.32          &  & \textbf{1.01}         & 5.36         \\
hamming          & 4          &  &  & 2.05           & 1.00      &  & 1.91              & 2.23          &  & 1.90               & 14.94         &  & 8.63              & 2.07          &  & \textbf{1.00}         & 19.81        \\
johnson          & 3          &  &  & 1.97           & 1.00      &  & 1.68              & 1.77          &  & 1.65               & 47.93         &  & 8.06              & 1.60          &  & \textbf{1.00}         & 15.77        \\
keller           & 1          &  &  & 2.04           & 1.00      &  & 1.17              & 2.71          &  & \textbf{1.00}      & 43.90         &  & 6.87              & 1.89          &  & 1.28                  & 4.90         \\
MANN             & 1          &  &  & 1.43           & 1.00      &  & 1.39              & 1.92          &  & 1.39               & 52.03         &  & 2.06              & 1.77          &  & \textbf{1.00}         & 37.22        \\
p\_hat           & *4          &  &  & 1.79           & 1.00      &  & 1.62              & 2.59          &  & 1.15               & 13296.40      &  & 7.64              & 1.77          &  & \textbf{1.00}         & 12.90        \\
san              & 10         &  &  & 4.71           & 1.00      &  & 4.66              & 2.49          &  & 4.55               & 19.80         &  & 77.36             & 1.99          &  & \textbf{1.00}         & 4.44         \\
sanr             & 3          &  &  & 1.56           & 1.00      &  & 1.37              & 3.42          &  & 1.08               & 377.32        &  & 4.65              & 2.01          &  & \textbf{1.00}         & 4.15         \\[1.2ex] \cline{1-2} \cline{5-6} \cline{8-9} \cline{11-12} \cline{14-15} \cline{17-18} 
\rmfamily{Tot/Avg}            & 43         &  &  & 2.31           & 1.00      &  & 2.17              & 2.64          &  & 2.02               & 1330.80       &  & 22.62             & 2.65          &  & \textbf{1.02}         & 10.77        \\ \bottomrule
&&&&&&&&&&& \multicolumn{7}{r}{\scriptsize *subset of instances with known $\UB_1^\star$}
\end{tabular}
\end{table}

\normalsize

\section{Conclusions}
\label{sec:CONCL}
In this study, we conducted the first theoretical and computational comparison of the three main upper bounds on~$\optimum$, the optimal value of the EWMCP, proposed independently by \citet{san2019new}, \citet{shimizu2020maximum}, and \citet{hosseinian2020lagrangian}.
Our theoretical analysis shows that neither upper bound admits a performance guarantee: for each, there exist instances where its ratio to the optimal EWMCP value is unbounded.
Moreover, for every choice of a ratio between two of the three upper bounds, we identified families of instances for which such ratio achieves the largest possible value. A summary of all the results can be found in \Cref{fig:result_summary}.

\begin{figure}
\centering
\caption{Summary of the theoretical relationships among the three upper bounds and the edge-weighted clique number~$\optimum$, all evaluated with respect to a given coloring~$\mathscr{C}$ (with $k = |\mathscr{C}|$ for~$\mathrm{UB}_3$).
An arrow from $\optimum$ to an upper bound is labeled with the largest possible value of the ratio of the upper bound to $\optimum$.
An arrow is drawn from an upper bound to the other if the former can dominate the latter. 
Moreover, the label near the head of the arrow reports the largest possible ratio of the head to the tail. The proposition proving each result is referenced next to the corresponding label.
}\label{fig:result_summary}
\begin{tikzpicture}[
    >=stealth,
    thick,
    every node/.style={font=\small, fill=none},
    lbl/.style={font=\normalsize, fill=white, inner sep=2pt, rounded corners=1pt},
    bound node/.style={circle, draw, minimum size=32pt, inner sep=1pt, font=\normalsize, line width=0.8pt},
    myarrow/.style={black, line width=1pt},
  ]



  \node[bound node, draw=black, fill=white, font=\normalsize, minimum size=46pt] (omega) at (0,0) {$\optimum$};

  \node[bound node, draw=black] (UB3) at (90:3.5) {$\mathrm{UB}_3$};
  \node[bound node, draw=black] (UB1) at (210:3.5) {$\mathrm{UB}_1$};
  \node[bound node, draw=black] (UB2) at (330:3.5) {$\mathrm{UB}_2$};

  
  \draw[-{stealth}, myarrow] (omega) -- (UB1) node[lbl, pos=0.5] {$\infty$}
  node[lbl, pos=0.5, xshift = -0.6cm, yshift=0.3cm] {\tiny Prop.~\ref{prop:example_bad_itaspa}};
  
  \draw[-{stealth}, myarrow] (omega) -- (UB2) node[lbl, pos=0.5] {$\infty$}
  node[lbl, pos=0.5, xshift = 0.5cm, yshift=0.3cm] {\tiny Prop.~\ref{prop:example_bad_japan}};

  \draw[-{stealth}, myarrow] (omega) -- (UB3) node[lbl, pos=0.5] {$\infty$}
  node[lbl, pos=0.5, xshift=0.8cm] {\tiny Prop.~\ref{prop:example_bad_hfb}};
 
  \draw[{stealth}-{stealth}, myarrow, bend left=30] (UB1) to 
    node[lbl, pos=0.15] {$\infty$}
    node[lbl, pos=0.85] {$\infty$}
    node[lbl, pos=0.15, xshift=-0.8cm] {\tiny Prop.~\ref{prop:os_over_hfb}}
    node[lbl, pos=0.85, xshift=-0.9cm] {\tiny Prop.~\ref{prop:hfb_over_os}}
    (UB3);

  \draw[{stealth}-{stealth}, myarrow, bend left=30] (UB3) to 
    node[lbl, pos=0.15] {$\infty$}
    node[lbl, pos=0.85] {$2$}
    node[lbl, pos=0.15, xshift=0.9cm] {\tiny Prop.~\ref{prop:hfb_over_jap}}
    node[lbl, pos=0.85, xshift=0.8cm] {\tiny Prop.~\ref{prop:UB2_leq_2UB3},\ref{prop:tightness_UB2_UB3}}
    (UB2);

  \draw[{stealth}-{stealth}, myarrow, bend right=30] (UB1) to 
    node[lbl, pos=0.15] {$\infty$}
    node[lbl, pos=0.85] {$\infty$}
    node[lbl, pos=0.15, yshift=-0.45cm] {\tiny Prop.~\ref{prop:example_japan_wins}}
    node[lbl, pos=0.85, yshift=-0.45cm] {\tiny Prop.~\ref{prop:example_itaspa_wins}}
    (UB2);  
    
  \end{tikzpicture}
\end{figure}

In particular, the bound proposed by \citet{shimizu2020maximum} appears to perform better on instances where large independent sets are connected by a few heavy edges, while the one introduced by \citet{san2019new} tends to yield tighter values when the coloring includes a larger number of independent sets.
The third bound of \citet{hosseinian2020lagrangian} shows qualitative characteristics similar to those of \citet{shimizu2020maximum}, and can improve it by a factor of at most 2, in certain instances. Still, it can be arbitrarily worse than the other two bounds in specific families of instances.
These observations were corroborated by our empirical analysis, showing that the first two upper bounds behave differently depending on the graph's structure.
While they perform similarly on small or sparse instances, the upper bound by \citet{shimizu2020maximum} proves tighter on large, moderately dense graphs, especially around 50\% density.
In the experiments, the third upper bound tends to perform middle of the road between the other two. It is tighter than the one of \citet{san2019new} for intermediate densities, while it is looser in very sparse and very dense instances. This pattern emerges consistently across the randomly generated graphs, and similar behavior is also observed on the DIMACS benchmark set,
highlighting the importance of accounting for graph size and density when selecting or designing bounding strategies for the EWMCP. 

An additional practical outcome of our study is that the quality of $\UB_2$ can be significantly improved by appropriately ordering the independent sets of the coloring, and sorting them in non-decreasing order of cardinality turns out to be the best overall strategy on the DIMACS benchmark.

\noindent
These findings open several lines for future research, including the design of hybrid upper bounds that leverage the complementary strengths of the three methods based on instance characteristics, as well as their effective integration into exact algorithms such as branch-and-bound frameworks.

\bibliographystyle{abbrvnat}
\bibliography{biblio}

\end{document}